\newtheorem{theorem}{Theorem}
\newtheorem{lemma}[theorem]{Lemma}
\newtheorem*{lemmata}{Lemma}
\newtheorem{corollary}[theorem]{Corollary}
\newtheorem{proposition}[theorem]{Proposition}
\newtheorem{remark}[theorem]{Remark}
\newcounter{Enum}					
\newenvironment{Enumerate}{\begin{enumerate}[label={\rm(\roman*)}]}{\end{enumerate}}
	\title{Localization of trigonometric polynomials and the Lev-Tselishchev lemma}
\author{Bochkov Ivan}
\address{SPB SU}
\email{bv1997@ya.ru}
\subjclass[2010]{60G55, 46E22, 47B32}
\keywords{quasi-bases, Schauder frames, trigonometric polynomials}
\begin{document}
	%
	\begin{abstract}
	The paper shows that the localization lemma used by N. Lev and A. Tselishchev in solving the problem of constructing quasi-bases from uniformly separated shifts of a function in the space $L^p (\mathbb R) $ for $ p > ( 1 + \sqrt 5 )/2 $, is not carried over to small $p$.
	\end{abstract} 
	
	\maketitle
	
In the series of recent works by N. Lev and A. Tselishchev \cite{LT, LT2}, the problem of the existence of a quasibasis (Schauder frame) in the space $L^p(\mathbb{R})$, $1 \le p \le 2$, consisting of shifts of one function, was studied. The main result of these works is that a unconditional quasibasis of this type does not exist for all $p$ in the considered interval, and for $p > (1 + \sqrt{5})/2$, there exists a quasibasis consisting of uniformly spaced shifts of a function.

A key role in the proof of the latter result is played by the following lemma. Let $Q = \sum_m q_m e^{imt}$, $t \in \mathbb{R}$, be a trigonometric polynomial. For $p \in [1, \infty)$, we set $|Q|_{A_p} = \left(\sum_m |q_m|^p\right)^{1/p}$. We also denote $S_N(Q) = \sum_{m=-N}^N q_m e^{imt}$ the partial Fourier sum for $ Q$.

\begin{lemmata}\label{LT} Let $p > (1 + \sqrt{5})/2$. Then for any $\epsilon > 0$, there exist trigonometric polynomials $P = \sum_m p_m e^{imt}$ and $Q = \sum_m q_m e^{imt}$, $t \in \mathbb{R}$, such that
	
	\begin{Enumerate}
		\item $q_0 = 0$, $\max_j |q_j| < \epsilon$;
		\item $|P - 1|_{A_p} < \epsilon$;
		\item $|PQ - 1|_{A_p} < \epsilon$;
		\item $|PS_m(Q)|_{A_p} < C_p$ for some constant $C_p$ depending only on $p$.
	\end{Enumerate}
\end{lemmata}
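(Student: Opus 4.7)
My plan is to construct $P$ and $Q$ explicitly, aiming for polynomials supported on well-separated frequencies so that the products in conditions (iii) and (iv) can be expanded into cross terms. The natural first try is a single-scale lacunary Ansatz
\[
Q(t) = \sum_{j=1}^{N} b_j e^{i\lambda_j t}, \qquad P(t) = 1 + \sum_{j=1}^{N} a_j e^{-i\lambda_j t},
\]
with $\lambda_1 \ll \cdots \ll \lambda_N$ super-lacunary, so that the differences $\lambda_k - \lambda_j$ for $j \ne k$ are pairwise distinct and separated from $0$ and from the $\lambda_i$. Condition (i) is automatic provided $\max_j |b_j| < \epsilon$. The constant term of $PQ$ is $\sum_j a_j b_j$, and I would impose $\sum_j a_j b_j = 1$ to normalize; the remaining coefficients of $PQ$ live on the distinct frequencies $\{\lambda_j\}$ and $\{\lambda_k - \lambda_j : j \ne k\}$, so the $A_p$-norm of $PQ - 1$ splits neatly into $\|b\|_p$ plus the $\ell^p$-norm of the rank-one tensor $\{a_j b_k\}_{j \ne k}$.

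Conditions (ii) and (iii) then reduce to the budget constraints $\|a\|_p < \epsilon$ and $\|b\|_p < \epsilon$, while the identity $\sum a_j b_j = 1$ combined with H\"older forces $\|a\|_p \|b\|_{p'} \ge 1$. A short calculation shows these cannot all hold simultaneously once $p$ is small, so I would switch to a multi-scale construction: a finite sequence of levels $\ell = 1, \dots, L$, each carrying a lacunary block of frequencies with coefficients tuned geometrically across levels, in the spirit of a truncated Neumann series for $1/P$. Conditions (i)--(iii) are then verified block by block, each level contributing a telescoping correction so that the constant terms add up to $1$ while the off-diagonal cross terms stay small in $A_p$.

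The main obstacle, and the step where the exponent $(1+\sqrt 5)/2$ enters, is condition (iv). The difficulty is that $S_m$ can cut through a block and destroy the diagonal cancellation arranged for $PQ$; the surviving cross terms $a_j b_k$ (with $\lambda_k \le m$) must still be bounded in $A_p$ uniformly in $m$. Estimating the worst-case $|P S_m(Q)|_{A_p}$ in terms of the block sizes and the level parameter $L$ reduces, after substituting the scales determined by (ii)--(iii), to a numerical inequality of the form $p^2 - p - 1 > 0$, i.e.\ $p > (1 + \sqrt 5)/2$, which is precisely the threshold at which the two $\ell^p$ budgets for $a$ and $b$ can be balanced. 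I expect the hardest part to be organizing the multi-scale construction so that the block-level estimates for $P S_m(Q)$ telescope correctly and produce a constant $C_p$ independent of $\epsilon$, $L$, and $m$.
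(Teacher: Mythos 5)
This lemma is not proved in the paper: it is the Lev--Tselishchev lemma, quoted from \cite{LT}, and the paper's contribution is to prove that it \emph{fails} for $1 < p < (1+\sqrt 5)/2$ (Theorem~\ref{main}). There is therefore no proof in the paper against which to compare your attempt.

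Taken on its own, your sketch correctly diagnoses the failure of the single-scale Ansatz: with $\sum_j a_j b_j = 1$ forced by cancelling the constant term of $PQ$, H\"older gives $1 \le |a|_{l^p}\, |b|_{l^q}$ with $q$ the dual exponent, and since $q>p$ one has $|b|_{l^q}\le |b|_{l^p}$, so $1 \le |a|_{l^p}|b|_{l^p} < \epsilon^2$, a contradiction. You also correctly identify $p^2 - p - 1 > 0$, equivalently $q(2-p) < 1$, as the numerical invariant behind the threshold. Beyond that, however, the proposal is an outline rather than a proof: you do not specify the multi-scale construction (how many levels, what scales, what coefficient profile across levels), you never carry out the block-by-block estimate of $|PS_m(Q)|_{A_p}$, and you explicitly concede that ``the hardest part'' --- making those estimates telescope to a constant $C_p$ uniform in $m$, the number of levels, and $\epsilon$ while respecting the budgets coming from (ii) and (iii) --- is left undone. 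That hard part is precisely the content of the construction in \cite{LT}; without it you have a heuristic for why the exponent $(1+\sqrt 5)/2$ appears, not a proof of the lemma.
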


The condition $p > (1 + \sqrt{5})/2$ in the main result of Lev-Tselishchev arises from this lemma. The authors of \cite{LT} noted that the validity of the lemma for all $p > 1$ would imply the existence of the desired Schauder frame for all $p > 1$.

The purpose of this paper is to demonstrate that for $p < (1 + \sqrt{5})/2$, this lemma is false, which means that the question of the Schauder frame from shifts of a function cannot be solved by this method. Thus, there is a unique value for which the validity of the lemma is unknown, namely $p = (1 + \sqrt{5})/2$.

\section{Notations}

$ \mathbb T $ is the unit circle identified with the interval $ [0 , 2\pi ] $. We use $ | M | $ for the Lebesgue measure of a subset $ M \subset  \mathbb T $. 

The notation $x = \theta(y)$ frequently used in the article means that $y = O(x)$. The notation $x \succeq y$ means that $x \ge Cy$ for some absolute constant $C$. In this paper, the exponent $p$ will be fixed, i.e., all constants may depend on it. 

We will naturally identify measurable complex-valued functions on the unit circle with their trigonometric series. Then, $ f_m $ stands for the Fourier coefficients of a function $ f \in L^1 ( \mathbb T ) $. By $|F|_{L^p}$ we denote the $L^p$ norm of $F$ on the circle, and by $|F|_{A^p}$ the $l^p$ norm of its Fourier coefficients. The classical Hausdorff-Young inequality states that $|F|_{A^p} \ge c_p |F|_{L^q}$, $ p \in [1,2] $, where $c_p$ is an absolute constant depending on $p$, and $q = p/(p-1) $ is the dual exponent to $p$. Then, for any integrable function $ f$ on the unit circle we define $S_N(f) = \sum_{m=-N}^N f_m e^{imt}$, the partial Fourier sum. Occasionally we use $| \cdot|_{l^p}$ for the $l^p$-norm.

For two functions $A, B$ on the unit circle, $A* B$ stands for their convolution.

For a segment of the circle $I$ and a function $F$, we denote by $\mathbb{E}_I(F)$ the average of $F$ over the segment $I$ with respect to the Lebesgue measure unless otherwise specified. By $\mathbb{E}(F)$ we denote the average of $F$ over the entire circle. By $\int F$ we denote the integral of $F$ over the entire circle.

By $\widehat{A}(m)$ we denote the $m$-th Fourier coefficient of the function $A$.

For vectors $u, v$, by $(u, v)$ we denote their scalar product.

By $\chi_k$ we denote the characteristic function of the segment $[-k, k]$. By $\chi_M$ for an arbitrary set $M$ we denote the characteristic function of this set.

By $\mathbb{VAR}(F)$ we denote the variation of the function $F$.

By $D_m$ we denote the $m$-th Dirichlet kernel, $D_m(x) = \sum_{k=-n}^n \frac{e^{ikx}}{2}$.

By $\Re F$ we denote the real part of the function $F$.

A compactly supported function $f$ on the real  line will sometimes be identified with a function $F$ on the circle as follows, $F(x) = \sum_{n \in \mathbb{Z}} f(x + 2\pi n)$. For a function $f$ defined on $[-\pi, \pi]$, this coincides with the classical understanding.

\section{Main result}

\begin{theorem}\label{main} Let $ 1 < p < \frac{1 + \sqrt{5}}{2} $, $q$ be the dual exponent, $C > C_p > 1$ be a sufficiently large real number, $1 > \delta > 0$ be an arbitrary fixed real number. Let $ \widetilde A \in A_1 $ be such that for a function $ \tilde B \in A_1 $ the following assumptions are obeyed,
	
	1) $\mathbb{E}(\widetilde{B}) = 1$,

	2) $\Re(\mathbb{E}(\widetilde{A})) \leq C^{-1}$,
	
	3) $\max_m | (1 - \widetilde{A}) S_m(\widetilde{B}) |_{A_p} \preceq 1$
	
	4) $| (1 - \widetilde{A}) \widetilde{B} |_{A_p} < 1 - \delta$.
	
Then
	$$\label{norm} \left| \widetilde{A} \right|_{A_p} \succeq C^{\frac{q(2-p)-1}{p}-\epsilon}$$ for any $ \epsilon > 0 $, where the constants here and in point 3) may depend on $p$.
	\end{theorem}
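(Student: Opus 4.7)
The plan is to pass to $L^q$ estimates via Hausdorff-Young and then exploit the four hypotheses through a level-set analysis on $1-\widetilde A$, in which the uniform $A_p$-bound on $(1-\widetilde A)S_m(\widetilde B)$ at every truncation level $m$ plays a subtle multi-scale role.

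Fix a threshold $\tau\in(0,1)$ and set $E_\tau=\{x\in\mathbb T:|1-\widetilde A(x)|\leq\tau\}$, $\mu=|E_\tau^c|/(2\pi)$ in normalized measure. On $E_\tau$ one automatically has $\Re\widetilde A\geq 1-\tau$. Applying Hausdorff-Young to conditions 3 and 4 yields $|(1-\widetilde A)S_m\widetilde B|_{L^q}\preceq 1$ uniformly in $m$ and $|(1-\widetilde A)\widetilde B|_{L^q}\preceq 1$; Chebyshev then gives $\int_{E_\tau^c}|\widetilde B|^q\preceq\tau^{-q}$. Combined with $\mathbb E(\widetilde B)=1$ and Hölder this forces $\mathbb E(\chi_{E_\tau}\widetilde B)\geq 1-O(\mu^{1/p}\tau^{-1})$. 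Next, condition 2 together with $\Re\widetilde A\geq 1-\tau$ on $E_\tau$ and Hölder on $E_\tau^c$ produces
\[
|\widetilde A|_{L^q}\;\succeq\;\frac{(1-\tau)(1-\mu)-1/C}{\mu^{1/p}}.
\]
Hausdorff-Young in reverse then transfers this to $|\widetilde A|_{A_p}\succeq\mu^{-1/p}$ whenever the numerator stays bounded away from zero.

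The decisive remaining task is to upper-bound $\mu$ by a suitable negative power of $C$. Here the genuine $A_p$-strength of condition 3 --- as opposed to the $L^q$-information we already extracted --- must be exploited. At each truncation level $m$, $(1-\widetilde A)S_m(\widetilde B)$ splits into a low-frequency polynomial of degree $\preceq m$ and a high-frequency tail, both controlled in $A_p$ by a constant; Bernstein's inequality on the low-frequency piece translates the $\ell^p$-bound on Fourier coefficients into pointwise control at scale $\sim 1/m$, from which an upper bound $\mu\preceq C^{-\alpha}$ (for a suitable $\alpha>0$) can be extracted. Plugging this back and optimizing the pair $(\tau,m)$ should yield the exponent $(q(2-p)-1)/p=(q-p-1)/p$, where the simplification uses $q(2-p)=q-p$, which follows from $1/p+1/q=1$.

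The main obstacle is precisely this balancing step. The scale relation governing the optimization is quadratic in $p$, with solvability condition $p^2-p-1<0$; the golden ratio thus appears as the fixed point above which the recursive estimate stalls, matching exactly the threshold beyond which the theorem's conclusion becomes vacuous. The small loss $\epsilon$ in the exponent will absorb the logarithmic overhead of summing over a dyadic family of truncation scales $m$ in the iteration, and is the typical cost of this kind of multi-scale argument.
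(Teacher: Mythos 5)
Your scheme collapses at the very first display. You set $E_\tau=\{|1-\widetilde A|\leq\tau\}$ and $\mu=|E_\tau^c|/(2\pi)$, and your key inequality is
\[
|\widetilde A|_{L^q}\;\succeq\;\frac{(1-\tau)(1-\mu)-1/C}{\mu^{1/p}},
\]
followed by the plan to bound $\mu$ by a small power of $C$. But condition (2), $\Re\mathbb{E}(\widetilde A)\leq C^{-1}$, forces $\widetilde A$ to be close to $0$ (not to $1$) on the bulk of the circle. Therefore $1-\widetilde A$ is close to $1$ on most of $\mathbb T$, so $E_\tau$ is the \emph{small} set and $\mu$ is close to $1$. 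Concretely, in the model case of Theorem~\ref{main-easy}, where $\widetilde A=A=\chi_S$ with $|S|=\pi/C$, one has $E_\tau=S$ for every $\tau<1$, hence $\mu=1-1/(2C)$, and your numerator equals $(1-\tau)/(2C)-1/C<0$: the bound is vacuous, and $\mu^{-1/p}$ is bounded. So the quantity you propose to estimate cannot possibly produce a lower bound that grows with $C$, regardless of how cleverly the remaining steps are filled in.

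There is also a deeper structural problem beyond the sign of $\mu$. Even if one repairs the bookkeeping and works with the genuinely small set $E_\tau$, a single-scale level-set argument can only yield $|\widetilde A|_{A_p}\succeq |E_\tau|^{-1/p}\cdot(\text{numerator})$, and in the model case $|E_\tau|\asymp C^{-1}$, so the best one can hope for this way is an exponent $1/p$. The theorem asserts the exponent $(q(2-p)-1)/p=(q-p-1)/p$, which exceeds $1/p$ whenever $q>p+2$, i.e.\ whenever $p<\sqrt2$. Thus no amount of optimization over $(\tau,m)$ can close the gap on part of the range; the golden-ratio threshold cannot emerge from this computation. The enhancement must come from detecting oscillation of $\widetilde A$ simultaneously across many dyadic scales. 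That is exactly what the paper's argument does: it builds a dyadic partition of the circle governed by the discrepancy of $A=\Re\widetilde A$, extracts from it a $T$-sequence of mean-zero bump functions $F_i$ localized at scale $2^{-i}$ that each correlate with $A$, and feeds these into Lemma~\ref{T-sec}, whose proof uses the Littlewood--Paley projections $P_j(A)$ and a two-index H\"older estimate to combine contributions from all scales. Your invocation of Bernstein's inequality to bound $\mu$ is a placeholder for this entire mechanism and is not substantiated; condition (3), which supplies the scale-by-scale control (via the partial sums $S_m(\widetilde B)$ acting like Fej\'er means), has not actually been used in a way that sees more than one scale at a time. In short, the proposal is not a variant route to the result; it is missing the multi-scale engine that makes the theorem true and that the paper spends Sections 3--9 constructing.
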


Notice that $\frac{q(2-p)-1}{p} > 0$ whenever $p < \frac{1 + \sqrt{5}}{2}$.
	
	From this, it follows that the statement of Lemma \ref{LT} is not satisfied for any $p \in (1, \frac{1 + \sqrt{5}}{2})$. Indeed, if it were satisfied for some $p$ in this interval, then for any given $C > 1$, taking sufficiently small $\epsilon$ and setting $\widetilde{A} = 1 - P$, $\widetilde{B} = 1 - Q$, we would obtain trigonometric polynomials $\widetilde{A}$ and $\widetilde{B}$ satisfying the conditions of the theorem for a given $D$. Indeed, conditions (i), (iii), and (iv) are checked directly, and condition (ii) follows from the fact that as the $A_p$ norm tends to 0, then $|\widetilde{\widehat{A}}(0)| \to 0$ as well, and hence $\Re(\mathbb{E} A) \to 0$.
	
The result of the theorem now obviously contradicts condition (ii) of the lemma.
	
First, we prove a lemma that will allow us to estimate the $A_p$-norm of a function through an object called a $T$-sequence in the article.
	
	\section{$T$-sequences and key Lemma}
	
	We call a $T$--system $\mathcal{TS}_m = (F_m, T_m)$ a tuple of a natural number $m$, a function $F_m$ on the unit circle $ \mathbb T $, and a finite set of disjoint segments $T_m = \{T_{m,1}, T_{m,2}, \ldots T_{m,k_m}\}$, $ T_{ m, j } \subset \mathbb T $, with the following properties:
	
	\begin{enumerate}
		\item $F_m = 0$ outside $T_m$,
		\item $\forall i \quad \int_{T_{m,i}} F_m = 0$,
		\item $\forall x \quad |F_m(x)| \leq 1$,
		\item $\forall j \quad |T_{m,j}| \leq \frac{2\pi}{2^m}$
		\item $\mathbb{VAR}(F_m) \prec k_m$.
	\end{enumerate}
	
	For a $T$ system, we denote $\mu_m = \frac{k}{2^m}$.
	
	A $T$-sequence is a sequence of $T$-systems ${\mathcal{TS}_0, \mathcal{TS}_1, \ldots, \mathcal{TS}_k, \ldots}$.
	
	\begin{lemma}\label{T-sec} Fix some real numbers $p, r, \alpha$, such that $2 > p, r > 1$, $2 - p > \alpha > 0$. Let for some real $M, C$, such that $M \geq 1$, $C \geq 1$, there exist a $T$-sequence $\mathcal{P}$ and a complex-valued function $A$ on the circle with the following properties:
		\label{eq:lemma2}
		\begin{enumerate}
			\item $|\mathbb{E}(F_i A)| \succeq \frac{\mu_i}{M}$,
			\item $\mu_i \preceq C^{-r}$,
			\item $\sum_i \mu_i \succeq \frac{M^{2-\alpha}}{C}$.
		\end{enumerate}
		
		Then $|A|_{A_p} \succeq C^{\frac{r(2-p)-1}{p}}$ (the implied constant may depend on $p, r, \alpha$).
		
	\end{lemma}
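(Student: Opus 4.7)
The plan is to estimate $|A|_{A_p}$ via Fourier duality, using the hypotheses to construct an appropriate test function. The first ingredient is the Hausdorff--Young inequality: since $|F_m| \leq 1$ with $F_m$ supported on $T_m$ and $|T_m| \leq 2\pi\mu_m$, one obtains $|F_m|_{A_q} \leq c_p |F_m|_{L^p} \leq c_p (2\pi\mu_m)^{1/p}$. Combined with H\"older on the Fourier side, $|\mathbb{E}(F_m A)| \leq |F_m|_{A_q}|A|_{A_p}$, and hypothesis (1) then gives the single-scale estimate $|A|_{A_p} \succeq \mu_m^{1/q}/M$ for each $m$.

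To combine information across all scales, I would normalize $G_m = \varepsilon_m F_m/|F_m|_{A_q}$, where $\varepsilon_m$ is a unit-modulus constant chosen so that $\mathbb{E}(G_m A)$ is real and positive, and form $G = \sum_m c_m G_m$ for non-negative weights $c_m$ to be optimized. By duality,
\[ |A|_{A_p} \geq \frac{\mathbb{E}(GA)}{|G|_{A_q}} \succeq \frac{M^{-1}\sum_m c_m \mu_m^{1/q}}{|G|_{A_q}}. \]
The crux is the quasi-orthogonality estimate $|G|_{A_q}^q \preceq \sum_m c_m^q$. This rests on Fourier localization of $F_m$ around frequency $2^m$: the mean-zero condition $\int_{T_{m,j}} F_m = 0$ together with $|T_{m,j}| \leq 2\pi/2^m$ yields $|\widehat{F_m}(n)| \preceq |n|\mu_m/2^m$ for $|n| \ll 2^m$, while the bounded-variation bound $\mathbb{VAR}(F_m) \prec k_m$ yields $|\widehat{F_m}(n)| \preceq \mu_m\cdot 2^m/|n|$ for $|n| \gg 2^m$. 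Since $\widehat{F_m}$ is thus essentially concentrated in the dyadic annulus $|n|\sim 2^m$, the $G_m$'s have nearly disjoint Fourier supports across scales, and the orthogonality follows by summing annular contributions via a Young-type convolution inequality.

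Given this, optimizing $c_m$ via H\"older yields
\[ \sup_c \frac{\sum_m c_m \mu_m^{1/q}}{(\sum_m c_m^q)^{1/q}} = \Bigl(\sum_m \mu_m^{p-1}\Bigr)^{1/p} \]
(using $p/q = p-1$), whence $|A|_{A_p} \succeq M^{-1}(\sum_m \mu_m^{p-1})^{1/p}$. To finish, note that $p-2 < 0$ and $\mu_m \leq C^{-r}$, so $\mu_m^{p-1} = \mu_m \cdot \mu_m^{p-2} \geq C^{r(2-p)}\mu_m$; summing and using hypothesis (3) gives
\[ \sum_m \mu_m^{p-1} \geq C^{r(2-p)}\sum_m \mu_m \succeq M^{2-\alpha}C^{r(2-p)-1}. \]
Therefore $|A|_{A_p} \succeq M^{(2-\alpha-p)/p}C^{(r(2-p)-1)/p} \succeq C^{(r(2-p)-1)/p}$, the last step using $M \geq 1$ together with $\alpha < 2-p$ to ensure the $M$-exponent is nonnegative.

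The main technical obstacle is the quasi-orthogonality estimate $|G|_{A_q}^q \preceq \sum_m c_m^q$. The Fourier supports of the $F_m$ are only \emph{nearly} disjoint, with exponential leakage into adjacent dyadic scales rather than sharp cutoffs; a careful argument must combine the pointwise Fourier decay bounds above with the Hausdorff--Young total estimate $|F_m|_{A_q}^q \preceq \mu_m^{q-1}$ to control the scales at which both $F_m$ and $F_{m'}$ are non-negligible. Everything else (H\"older, optimization, and the final application of conditions (2) and (3)) is routine.
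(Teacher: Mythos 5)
Your strategy is the dual of the paper's: the paper estimates $\sum_j |P_j(A)|_{A_p}$ directly from the inequality $\sum_j |P_j(A)|_{A_p}|P_j(F_i)|_{A_q}\ge|\mathbb{E}(F_iA)|$ and then applies H\"older in $i,j$, whereas you build a test function $G=\sum_m c_m G_m$ and use duality, with the H\"older application replaced by an optimization over the weights $c_m$. Both arguments hinge on the same Littlewood--Paley localization of $\widehat{F_m}$ in dyadic frequency shells, and the final arithmetic agrees. So this is not a genuinely different route, and there are two concrete gaps in your version.

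First, the normalization $G_m=\varepsilon_m F_m/|F_m|_{A_q}$ is the wrong one. For the claimed quasi-orthogonality $|G|_{A_q}^q\preceq\sum_m c_m^q$ to follow from a Young-type convolution bound, you need $|P_j(G_m)|_{A_q}\preceq 2^{-|j-m|/p}$, i.e.\ a two-sided control $|F_m|_{A_q}\asymp\mu_m^{1/p}$. Hausdorff--Young gives the upper bound but nothing prevents $|F_m|_{A_q}$ from being much smaller, in which case the Fourier mass of $G_m$ is spread over many dyadic shells relative to the localization estimates, and the asserted inequality is not available. The fix is to normalize by $\mu_m^{1/p}$ rather than by $|F_m|_{A_q}$ (or not normalize at all and track the weights, as the paper effectively does); this costs nothing in your duality step since you only used $|F_m|_{A_q}\preceq\mu_m^{1/p}$ there anyway.

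Second, and more seriously, the route you sketch to the localization — combining the pointwise decay $|\widehat{F_m}(n)|\preceq\mu_m 2^{-|j-m|}$ with the total bound $|F_m|_{A_q}^q\preceq\mu_m^{q-1}$ — does not yield the needed per-shell bound $|P_j(F_m)|_{A_q}^q\preceq(\mu_m 2^{-|j-m|})^{q-1}$. The pointwise bound alone, summed over the $\sim 2^j$ frequencies of the $j$-th shell, gives $|P_j(F_m)|_{A_q}^q\preceq 2^j(\mu_m 2^{-|j-m|})^q$, which at $j=m$ equals $k_m\mu_m^{q-1}$ — off by the unbounded factor $k_m$. The total Hausdorff--Young bound controls $\sum_j|P_j(F_m)|_{A_q}^q$ but does not localize, so between these two you cannot get exponential decay for $j$ within $O(\log k_m)$ of $m$, and the Young convolution step fails. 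What is actually needed is an $L^2$ localization $\|P_j(F_m)\|_{L^2}^2\preceq\mu_m 2^{-|j-m|}$ (the paper's Proposition~\ref{eq:prop3}), which is proved by an averaging/convolution argument exploiting the mean-zero condition over each $T_{m,i}$ (for $j<m$) and the bounded variation (for $j>m$), followed by interpolation between $A_2$ and the $\ell^\infty$ bound of Proposition~\ref{eq:prop4}. Without this ingredient the quasi-orthogonality step is not justified. The rest of your argument — the H\"older optimization giving $\bigl(\sum_m\mu_m^{p-1}\bigr)^{1/p}$, the use of hypotheses (2) and (3), and the role of $\alpha<2-p$ to discard the $M$-power — is correct.
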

\begin{proof}
	
Given an $ j \in \mathbb N $ denote by $P_j(F)$ for a function $F\in L^1 ( \mathbb T ) $ the projection onto the space of functions whose Fourier transform has support on the union of segments $[-2^{j+1}, -2^j]$ and $[2^j, 2^{j+1}]$, that is, 

\[ P_j ( F ) (x) = \sum_{2^j \le |n| < 2^{j+1} } F_n e^{inx} . \]

Also denote $A_j = |P_j(A)|_{A_p}$. To prove the lemma, we need to estimate $\sum A_i$.

\begin{proposition} For any $ j \in \mathbb N $ \begin{equation}\label{Pr3} \left\|  P_j(F_m)\right\|_{L_2(\mathbb T ) }^2 \preceq \frac{\mu_m}{2^{|j-m|}}. \end{equation} \label{eq:prop3}
	\begin{proof}
		
Notice first that $\left\| P_j(F_m)\right\|^2_{L_2 (\mathbb T)} \leq \left\| F_m \right\|^2_{L^2(\mathbb T) } \leq \mu_m $, so it suffices to consider $|m-j| > 10 $.
		
For $j < m$, consider $T = 2^{j+2} F_m *\chi_{2^{-j-2}}$. We have
$$\left\|T\right\|_{{L_2}}^2 = 2^{2j} \sum_{ t \in \mathbb Z } 
		\left|
		\widehat{F_m}(t)
		\right|^2 \widehat{\chi_{2^{-j-2}}}(t)^2 \succeq \sum_{2^j \leq |t| < 2^{j+1}} \left|\widehat{F_m}(t) \right|^2 = \left\|P_j(F_m)\right\|_{{L_2}}^2 $$ 
		since $\left|\widehat{\chi_{2^{-j-2}}}(t) \right| \succeq 2^{-j} $ for $2^j \leq |t| < 2^{j+1}$. 
According to assumption (2) from the definition of $T$-systems the means of $ F_m $ over the intervals of the system vanish, hence $ T (x ) $ vanishes whenever neither end of the interval $ ( x - 2^{-j-2}, x + 2^{-j-2} ) $ belongs to a segment from $ T_m $, and, if either of them does, the integration in the definition of $ T $ actually goes over that segment(s) only. These two observations imply, respectively, that, first, the meausure of the support of $T $ is $O(\mu_m)$, and second, that $|T(x)| \preceq 2^{j-m}$ for any $ x$. It follows that $\left\|  P_j(F_m)\right\|^2_{L_2(\mathbb T ) } \preceq \left\|T \right\|_{L_2}^2 \preceq \frac{\mu_m}{2^{2|j-m|}}$, as required.
		
		For $ j > m$, consider $T = F_m - 2^{j-2} F_m* \chi_{2^{2-j}}$. Then $ \left\|  P_j(F_m)\right\|^2_{L_2(\mathbb T ) } \preceq \left\|T\right\|_{L_2}^2$ similarly to the previous case. On the other hand, slicing the circle into equal $ \sim 2^{-j } $-intervals we have \[ \int_0^{2\pi} |T| = \int_0^{\frac{2\pi}{2^{j-3}}} \sum_{i=0}^{2^{j-3}-1} |T\bigl(x + \frac{i}{2^{j-3}}\bigr)| dx . \] Since $|T(x)| \leq \max_{|u-x| < 2^{2-j}} |F_m(u) - F_m(x)|$, we infer that the sum in the integrand is estimated above by $ \mathbb{VAR}(F_m)$. Taking into account that   $|T| \preceq 1$, we obtain 
		\[ \int_0^{2\pi} |T|^2 \prec \int_0^{2\pi} |T| \leq 2^{-j} \mathbb{VAR}(F_m) \preceq \frac{k_m}{2^j}, \] which is exactly \eqref{Pr3} in the case $ j > m $.
	\end{proof}
\end{proposition}

\begin{proposition} $\left|\widehat{F_m}(n) \right| \prec \frac{\mu_m}{2^{|j-m|}}$ for $2^j \leq |n| < 2^{j+1}$. \label{eq:prop4}
	\begin{proof}
		
		Note that $\int_0^{2\pi} |F_m| \preceq \mu_m$, so it suffices to consider $|m-j| > 10$.
		
		Fix an arbitrary $n$ such that $|n| \in [2^j, 2^{j+1})$. If $j \leq m$ then for an arbitrary $I \in T_m, I = [u,v]$, we have 
		$$ \left|\int_I F_m(x) e^{inx}\right| = \left| e^{inu} \int_I F_m(z)(e^{in(x-u)} - 1) + e^{inu} \int_I F_m(x)\right| \preceq |n| |I|^{2}, $$ 
hence $|\widehat{F_m}(n)| \preceq k_m 2^{j-2m} = \mu_m 2^{ j-m } $, as required.
		
For $j \geq m$, we use \cite[Vol. I. Theorem 4.12]{Zygmund} according to which $ | \hat{f}(n) |\preceq  \mathbb{VAR}(f) /|n| $ for any function $ f $ of bounded variation. Applied to $ f = F_m $ it gives that $|\widehat{F_m}(n)| \le  \mathbb{VAR}(F_m) 2^{-j }  \preceq k_m 2^{-j} =\frac{\mu_m}{2^{j-m}}$, as required.
	\end{proof}
\end{proposition}

\medskip

\textit{Proof of Lemma \ref{T-sec}.} Let $q$ be the dual exponent to $p$. We have 
\[\label{mult} \sum_j \left| P_j(A)\right|_{A_p} \left| P_j(F_i)\right|_{A_q} \underset{\substack{\uparrow \\ \text{H\"older inequality}\\ \text{in each sum in l}}}\ge  
\\ \left|\sum_j \left( \sum_{ |l|= 2^j }^{ 2^{j+1}} \widehat{F_i}(l) \widehat{A}(-l) \right)\right| = |\mathbb{E}(AF_i)| \succeq \frac{\mu_i}{M} . \] 
On account of Propositions 3 and 4
$$ |P_j(F_i)|^q_{A_q} \preceq \left|P_j(F_i)\right|^2_{A_2} \bigl(\max_{2^k \leq |n| < 2^{k+1}} |\widehat{F_m}(n)|\bigr)^{q-2} \preceq \frac{\mu_i}{2^{|i-j|}} \Bigl(\frac{\mu_i}{2^{|i-j|}}\Bigr)^{q-2} . $$ Plugging this into \eqref{mult} one continues the inequality ($ (1 +( q-2 ))/q = 1/p $), 
\[\frac{\mu_i}{M} \preceq 
\sum_j \left| P_j(A)\right|_{A_p} \Bigl(\frac{\mu_i}{2^{|i-j|}} \Bigr)^{1/p }.\] 
 Summing up in $ i $ and using the H\"older inequality, we find
\begin{multline*} \frac{\sum_i \mu_i}M \preceq \sum_{i,j} \left| P_j(A)\right|_{A_p} 2^{-|i-j|/p} \mu_i^{1/p} \le \Bigl( \sum_{i,j} \left| P_j(A)\right|_{A_p}^p 2^{-\tau |i-j|/p} \Bigr)^{ 1/p} \\ \Bigl( \sum_{i,j} \mu_i^{q/p} 2^{-\tau^\prime |i-j|/p} \Bigr)^{1/q} \preceq \left| A \right|_{A_p} \bigl(\sum_i \mu_i^{q/p }\bigr)^{ 1/q } \preceq \left| A \right|_{A_p} \bigl(\sum_i \mu_i \bigr)^{ 1/q } C^{ - \frac rq (\frac qp - 1 )}  \end{multline*}
for some positive $ \tau $, $ \tau^\prime $.

Now, dividing by $\bigl(\sum_i \mu_i \bigr)^{ 1/q }$, and using assumption (3) of Lemma \ref{T-sec} we obtain \[ \left| A \right|_{A_p} \succeq M^{ (2-\alpha)/p -1 } C^{ \frac rq (\frac qp - 1 ) - \frac 1p } \ge C^{ r(\frac 2p - 1) - \frac 1p } ,\]
as required.\end{proof}

	
\section{Dyadic partition}

The idea of the main proof is as follows: we consider the process of dyadic partitioning of the circle into smaller and smaller segments while maintaining certain invariants.

The result of the partitioning algorithm will be a set of certain segments - the vertices of the dyadic partition tree, with the help of which a $T$-sequence will be constructed that satisfies the condition of Lemma \ref{T-sec} for the function $A$, from which the result will follow.

Let us fix the general terminology associated with the dyadic partitioning of the circle.

At each step of the partitioning algorithm, we have a certain set of alive segments (denoted by $\mathcal{AL}$). At each step of the algorithm, we take the longest segment $I \in \mathcal{AL}$, remove it from $\mathcal{AL}$, and either declare it a leaf according to some rules (denote the set of leaves by $\mathcal{L}$), or divide it into 2 segments (called children or sons of $I$) $I_1, I_2$ and put them in $\mathcal{AL}$. Note that the segments $I_1, I_2$ in general may have different lengths.

The segments considered during the algorithm form a set of rooted trees, the roots of which are the set of segments from $\mathcal{AL}$ at the beginning of the algorithm. We will call the father of a segment $I$ such a segment $I'$ that $I$ is a child of $I'$. If segment $I$ has a father, we will denote the father by $\mathcal{F}(I)$. The children of segment $I$, if any, will be denoted by $\mathcal{C}_1(I), \mathcal{C}_2(I)$. We will say that $I$ is an ancestor of $I'$ if $I$ lies on the path from $I'$ to the root of its tree. In this situation, we will also say that $I'$ is a descendant of $I$.

Also, for an arbitrary tree $T$, we will denote the set of its leaves by $\mathcal{L}(T)$.

\section{Characteristic function of a set}

We are going to demonstrate the idea of the proof on a particular case of the problem, which is based on th same ideas and has fewer technical details.

\begin{theorem}\label{main-easy}
	Let $1 < p < \frac{1 + \sqrt{5}}{2}$, $q$ be the dual exponent.
	
	Let $A$ be the characteristic function of an arbitrary measurable set on the circle of measure $\frac{\pi}{C}$, $C$ - a sufficiently large number. Then if for any $m$, $\left|(1 - A)S_m(A) \right|_{A^p} = O\left(\frac{1}{C}\right)$, then $ \left|A \right|_{A^p} \geq C^{\delta}$ for $\delta(p) = \frac{q(2 - p) - 1}{p}$.
\end{theorem}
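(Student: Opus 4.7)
The plan is to apply Lemma \ref{T-sec} to $A = \chi_E$ with $r = q$, since then the conclusion $|A|_{A_p} \succeq C^{(r(2-p)-1)/p}$ is exactly the bound $C^{\delta}$ we want. We fix some $\alpha \in (0, 2-p)$ (available because $p < (1+\sqrt{5})/2$) and take $M$ of order $C$; the task is then to produce a $T$-sequence $\{(F_m, T_m)\}$ adapted to $A$ satisfying conditions (1)--(3) of that lemma.

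The $T$-sequence is built by a dyadic partitioning as sketched in the previous section. Starting from the full circle as the only alive segment, at each step we take the longest alive segment $I$ and either split it into two children $I_1, I_2$ (possibly of unequal length, chosen so that both retain a nontrivial share of $|E \cap I|$), or declare $I$ a leaf when either its length has shrunk past a threshold of order $C^{-q}$ or its local density $\alpha_I = |E \cap I|/|I|$ has left a suitable window centered at $1/C$. At each dyadic scale $m$ we form $T_m$ from the non-leaf segments at that scale with $\alpha_I$ of order $1/C$, capping the total measure at $2\pi C^{-q}$ to force $\mu_m \preceq C^{-q}$. On each $I \in T_m$ we define $F_m$ to equal $+1$ on a subset $S_+ \subset I$ of measure $|I|/2$ that contains $E \cap I$ (possible since $\alpha_I \le 1/2$) and $-1$ on $I \setminus S_+$; this gives $|F_m| = 1$ on the support, zero mean on each $T_{m,j}$, and total variation of order $k_m$ once the splitting rule is arranged so that $E \cap I$ consists of a bounded number of intervals.

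The three conditions of Lemma \ref{T-sec} are then verified as follows. Condition (1): since each contribution is $|E \cap I| = \alpha_I |I| \succeq |I|/C$, we have $|\mathbb{E}(F_m A)| = (2\pi)^{-1} \sum_{I \in T_m} |E \cap I| \succeq \mu_m/C$, matching our $M$. Condition (2) is enforced by the cap. Condition (3) requires $\sum_m \mu_m \succeq C^{2-\alpha}/C = C^{1-\alpha}$; this is where the hypothesis $|(1-A) S_m(A)|_{A^p} = O(1/C)$ enters. By Hausdorff--Young the hypothesis implies $|(1-A) S_m(A)|_{L^q} \preceq 1/C$, and via averaging against indicators of dyadic segments this $L^q$ control translates into a scale-by-scale bound on how rapidly $\alpha_I$ can exit the density window along the tree, thereby guaranteeing that the splitting process populates enough scales to collect the required $\sum_m \mu_m$.

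The main obstacle lies precisely in this last conversion. Turning the global $A^p$-smallness of $(1-A) S_m(A)$ into a per-scale lower bound on the number of intermediate-density segments in the tree is the technical heart of the argument. A natural route is a potential/budget argument along the tree: each time a segment exits the window $\alpha_I \sim 1/C$ (either upward into $E$ or downward into $E^c$) one charges a unit of cost to a global budget controlled by $|(1-A) S_m(A)|_{L^q}^q \preceq C^{-q}$, and one must show the budget suffices to underwrite on the order of $C^{q+1-\alpha}$ productive scales. The arithmetic relation $q(2-p) > 1$ that makes this barely possible is precisely the reason the method fails at $p = (1+\sqrt{5})/2$.
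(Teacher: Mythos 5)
The proposal correctly identifies that Theorem~\ref{main-easy} should be reduced to Lemma~\ref{T-sec} with $r=q$ via a dyadic tree, but the proposed construction of the $T$-sequence is substantially different from the paper's and breaks at two load-bearing points.

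First, the function $F_m$ you define — $+1$ on a set $S_+\supset E\cap I$ of measure $|I|/2$ and $-1$ on the rest — need not satisfy the variation condition $\mathbb{VAR}(F_m)\prec k_m$. For a generic measurable set $E$, $E\cap I$ can have arbitrarily many connected components (or none at all: think of a Cantor-type set), and no finite-variation choice of $S_+$ containing $E\cap I$ exists in general. The paper sidesteps this entirely: its $F_i$ is $\pm 1$ on the two \emph{children} of an interesting node, so its variation is automatically controlled by the number of nodes. More importantly, the paper's $F_i$ is designed to detect the imbalance $D_I=|\mathbb{E}_{I_1}A-\mathbb{E}_{I_2}A|$, not the raw density $\alpha_I$; this is what lets the telescoping identity $\sum_{I}\frac{|I|}{4}D_I^2 = \sum_{\mathrm{leaves}}P_I - P_{I_0}$ produce the energy bound $\sum_{\mathcal{SI}}|I|D_I^2\succeq 1/C$ and then, by pigeonholing over dyadic ranges of $D_I$, an $M$ for which conditions (1) and (3) hold simultaneously. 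By pinning $M\asymp C$ in advance and working with density $\alpha_I\sim 1/C$, you discard exactly the structure that makes (3) provable, and indeed you do not prove it.

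Second, and more seriously, the bound $\mu_m\preceq C^{-q}$ cannot be obtained from the hypothesis for the segments you select; it can only be \emph{imposed}, which is not the same thing. The paper's Lemma~\ref{Feuer} converts $\|(1-A)(\Phi_{2^k}\ast A)\|_{L^q}\preceq 1/C$ into $\sum_{\mathcal{R}_k}|I|\preceq C^{-q}$ precisely because the segments in $\mathcal{R}_k$ have $\mathbb{E}_I A > 1/100$, so that $\Phi_{2^k}\ast A\succeq 1$ on them and hence $\mathbb{E}_I|(1-A)(\Phi_{2^k}\ast A)|^q\succeq 1$. If instead $\alpha_I\sim 1/C$, then $\Phi_{2^k}\ast A\sim 1/C$ and $\mathbb{E}_I|(1-A)(\Phi_{2^k}\ast A)|^q\sim C^{-q}$, so the $L^q$ estimate only yields the vacuous $C^{-q}\sum_I|I|\preceq C^{-q}$, i.e.\ no constraint on the total length at all. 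The ``budget'' sketch you offer in place of the Fej\'er argument is not quantified and, given the arithmetic above, the hypothesis simply does not provide the budget you claim it does at the $1/C$ density scale. In short: the paper builds the $T$-sequence out of high-density (stellar, $\mathbb{E}_I A\geq 1/1000$) nodes so that the Fej\'er kernel estimate bites, and gets conditions (1) and (3) from a telescoping potential and pigeonholing; your construction is built from low-density nodes and a fixed $M$, and neither (2) nor (3) actually follows.
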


This theorem is a special case with $\widehat{A} = A$, $\widehat{B} = CA$.

Note without proof that for $p > \frac{1 + \sqrt{5}}{2}$, the condition of this theorem is not satisfied, which can be shown using a slightly modified example by Tselishchev and Lev. The critical case $p = \frac{1 + \sqrt{5}}{2}$ remains open.

\section{Proof of Theorem \ref{main-easy}}

Consider the process of dyadic partitioning of the circle. Initially, we put a single segment equal to the entire circle $[0, 2\pi]$ into $\mathcal{AL}$. We will describe how we process the current segment:

\bfseries Algorithm 1. \mdseries

\begin{enumerate}
	
	\item If $\mathbb{E}_I A \leq \frac{1}{C^2}$ or $\mathbb{E}_I A \geq \frac{1}{10}$, we put $I$ into $\mathcal{L}$ and end the work with the segment. In the former case, we will call this leaf \textbf{small}, and in the latter -- \textbf{large}. We will denote the set of small leaves by $\mathcal{SL}$, large leaves - by $\mathcal{BL}$.
	
	
	\item Otherwise, we remove $I$ from $\mathcal{AL}$, divide it into two equal halves $I_1, I_2$, and put its children $I_1$ and $I_2$ there.
\end{enumerate}

This algorithm can continue infinitely, but the total measure of the leaves is equal to the measure of the entire circle, so we may work with infinite as well.

We will call the \bfseries potential \mdseries of a segment the value $ P_I = (\mathbb{E}_I A)^2 |I|$, and the \bfseries imbalance $D_I$ \mdseries of a segment $ I $ the number 
\[ D_I =|\mathbb{E}_{I_1} A - \mathbb{E}_{I_2} A|, \] where $I_1, I_2$ are the children of the segment.

\begin{lemma} Let $\mathcal{T}$ be an arbitrary set of segments forming a subtree in the dyadic partition tree, such that $I_0$ is its root. Then 
\[\sum_{I \in \mathcal{T} \setminus \mathcal{L}(T)} \frac{I}{4}D_I^2 = \bigl(\sum_{I \in \mathcal{T} \cap \mathcal{L}(T)} P_I \bigr) - P_{I_0} . \]
	
	\begin{proof}
For any segment $ I $ which is not a leaf we have 
\[ \frac 14 D_I^2 = \frac 12 \left( (\mathbb{E}_{ \mathcal C_1 (I) } A )^2 + (\mathbb{E}_{ \mathcal C_2 (I) } A )^2 \right) - (\mathbb{E}_I A )^2  \] 
which is just the identity $ (s-t)^2 = 2 ( s^2 + t^2 ) - ( s+t)^2 $. In the $ P_I $ notation it is written as 
\[ P_{\mathcal{C}_1(I)} + P_{\mathcal{C}_2(I)} - P_I = \frac{I}{4}D_I^2 . \]
Now, when summing up in $ I $, each term $ P_I $ appears in the sum exactly twice, once on its own and once as someone else's son, with the opposite signs, except for the root and the leafs terms. Hence the sum is telescoping to $ (\sum_{I \in \mathcal{T}(I) \cap \mathcal{L}(T)} P_I) - P_{I_0}$, as required.
	\end{proof}
	
\end{lemma}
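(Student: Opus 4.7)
The plan is to reduce the claimed global identity to a local parent-to-children relation and then collapse a telescoping sum over the subtree $\mathcal{T}$.

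First I would establish the local step: for any internal node $I$ of $\mathcal{T}$ with children $\mathcal{C}_1(I),\mathcal{C}_2(I)$,
\[ P_{\mathcal{C}_1(I)} + P_{\mathcal{C}_2(I)} - P_I \;=\; \frac{|I|}{4} D_I^2 . \]
Algorithm 1 splits every alive segment into two equal halves, so $|\mathcal{C}_j(I)|=|I|/2$ and the linearity of $\mathbb{E}$ yields $\mathbb{E}_I A = \tfrac12(\mathbb{E}_{\mathcal{C}_1(I)}A + \mathbb{E}_{\mathcal{C}_2(I)}A)$. Writing $s=\mathbb{E}_{\mathcal{C}_1(I)}A$ and $t=\mathbb{E}_{\mathcal{C}_2(I)}A$, the elementary identity $(s-t)^2 = 2(s^2+t^2)-(s+t)^2$, multiplied through by $|I|/4$, is exactly the displayed equation.

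Next I would sum this identity over all $I \in \mathcal{T}\setminus \mathcal{L}(\mathcal{T})$. In the resulting sum each segment $J \in \mathcal{T}$ appears according to its role: $P_J$ occurs with coefficient $-1$ if $J$ is itself an internal node of $\mathcal{T}$, and with coefficient $+1$ for every internal ancestor that claims $J$ as a child (which happens at most once, namely when $\mathcal{F}(J)\in \mathcal{T}\setminus\mathcal{L}(\mathcal{T})$). Hence:
\begin{itemize}
\item if $J$ is a non-root internal node of $\mathcal{T}$, the $+1$ and $-1$ cancel;
\item if $J=I_0$, only the $-1$ survives (a root has no parent in $\mathcal{T}$);
\item if $J\in \mathcal{L}(\mathcal{T})$, only the $+1$ survives (a leaf of $\mathcal{T}$ appears only as a child).
\end{itemize}
The surviving contributions add up to $\sum_{J\in \mathcal{L}(\mathcal{T})} P_J - P_{I_0}$, which is precisely the right-hand side of the lemma.

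There is essentially no obstacle beyond bookkeeping; one only needs to treat the degenerate case where $I_0$ is itself a leaf of $\mathcal{T}$ (then both sides are zero, reading $0 = P_{I_0}-P_{I_0}$), and to note that the local identity is really the Pythagoras-type orthogonal-decomposition fact that replacing the constant $\mathbb{E}_I A$ on $I$ by the two-step function taking values $\mathbb{E}_{\mathcal{C}_j(I)} A$ on the halves increases the $L^2$-mass by exactly $\frac{|I|}{4} D_I^2$.
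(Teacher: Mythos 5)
Your argument is identical to the paper's: you derive the same local parent-to-children identity $P_{\mathcal{C}_1(I)}+P_{\mathcal{C}_2(I)}-P_I=\frac{|I|}{4}D_I^2$ from $(s-t)^2=2(s^2+t^2)-(s+t)^2$, then telescope over the internal nodes of $\mathcal{T}$. Your bookkeeping of which coefficients survive (leaves get $+1$, the root gets $-1$, interior nodes cancel) is exactly the paper's telescoping step, just spelled out more explicitly.
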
 
 Let's call a segment of the dyadic partition $I$ \textbf{stellar} if one of the following holds:

\begin{enumerate}
	\item $\mathbb{E}_I A \ge \frac{1}{100}$, or
	
	\item $\mathbb{E}_I A \ge \frac{1}{1000}$, and $\mathcal{F}(I)$ is stellar.
\end{enumerate}
In other words, a stellar segment is the one that either itself or one of its ancestors has the average of $ A $ at least $ 1/100 $, and, in the latter case, on all segments on the path from the segment to this ancestor, the average $A$ is at least $\frac{1}{1000}$.

Let's call a stellar segment basic if its father is not stellar. We call a segment final if it is either a leaf or it is not stellar but its father is stellar. Let's denote the set of stellar segments by $\mathcal{SI}$, the set of final segments by $\mathcal{FI}$, and the set of basic segments by $\mathcal{BI}$.

For each segment $I \in \mathcal{BI}$, consider the maximum subtree $S_I$ with root at $I$ such that for any $I_1 \in S_I$, either $I_1 = I$ or $\mathcal{F}(I_1) \in \mathcal{SI}$. Such a subtree is uniquely defined.

\begin{proposition}
	$S_I$ do not intersect for different basic segments.
	\begin{proof}
		Suppose $S_{I_1}$ and $S_{I_2}$ intersect. Then, without loss of generality, $I_2$ lies in the subtree of $I_1$. But then $\mathcal{F}(I_2) \notin \mathcal{SI}$.
	\end{proof}
\end{proposition}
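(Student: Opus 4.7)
The plan is to argue by contradiction, exploiting the tree structure and the definition of a basic segment (a stellar segment whose father is not stellar). Suppose two basic segments $I_1 \neq I_2$ give rise to subtrees $S_{I_1}$ and $S_{I_2}$ sharing a segment $J$. Since every element of $S_{I_i}$ is a descendant of $I_i$ (or equals $I_i$), the segment $J$ is a descendant of both $I_1$ and $I_2$. In the dyadic tree of the circle this means $I_1$ and $I_2$ lie on a common root-to-$J$ path, hence one must be an ancestor of the other; without loss of generality $I_1$ is a strict ancestor of $I_2$.

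Next I would use connectedness of $S_{I_1}$ as a subtree: the unique dyadic path from $I_1$ down to $J$ is entirely contained in $S_{I_1}$, and this path must pass through $I_2$. Now I distinguish two cases. If $J = I_2$, then $I_2 \in S_{I_1}$ and $I_2 \neq I_1$, so by the defining condition of $S_{I_1}$ the father $\mathcal{F}(I_2)$ is stellar. If $J$ is a strict descendant of $I_2$, then the segment $I_2$ is an internal node of the path, and unrolling the defining condition of $S_{I_1}$ inductively along this path shows that every node strictly between $I_1$ and $J$ has a stellar father; in particular $\mathcal{F}(I_2) \in \mathcal{SI}$ again.

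In either case we conclude $\mathcal{F}(I_2) \in \mathcal{SI}$, which directly contradicts the hypothesis $I_2 \in \mathcal{BI}$, by definition of a basic segment. This yields the non-intersection.

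I do not expect any serious obstacle: the proof is essentially unpacking the definitions. The only subtlety is to convince oneself that, when $J$ is strictly below $I_2$, the inductive propagation of stellarity along the path from $I_1$ to $J$ actually forces $\mathcal{F}(I_2)$ (and not merely $I_2$ itself) to be stellar; this follows because a non-root member of $S_{I_1}$ has a stellar father, and $I_2$ is a non-root member of $S_{I_1}$ once $J$ lies strictly below it.
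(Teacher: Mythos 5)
Your proof is correct and uses essentially the same argument as the paper's (very terse) proof: reduce to the case where $I_1$ is a strict ancestor of $I_2$, use connectedness of the subtree $S_{I_1}$ to conclude $I_2 \in S_{I_1}$, and then deduce $\mathcal{F}(I_2) \in \mathcal{SI}$ from the defining property of $S_{I_1}$, contradicting that $I_2$ is basic. You merely spell out the steps the paper leaves implicit.
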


\begin{proposition}
	$\left(\sum_{\mathcal{BL} \cap S_I} P_{I'}\right) - P_I \ge \frac{2}{5000} |I|$.
	\begin{proof}
Notice first that $\frac{1}{50} \ge \mathbb{E}_IA \ge 1/100 $, because $ I $ is stellar (lower bound), and $ \mathcal F ( I ) $ is not (upper bound). In particular, $ P_I \le (1/2500) |I| $. Consider the integral $ \int_{I} A $. This integral is the sum of integrals over three disjoint sets -- the union of small leafs contained in $ S_I $, that of long leafs contained in $ S_I $, and the union of segments from $ S_I $ which are neither leafs, no stellar; the respective integrals to be denoted $(I)$, $(II)$ and $(III)$. Now, $ (I) \preceq O ( C^{-2} ) |I| $, by the very definition of small leafs, $(III) \le (1/1000)|I| $ because $ \mathbb{E}_{I'}A \le \frac{1}{1000}$ for any nonstellar segment $ I^\prime \in S_I $. 

Plugging these into the bound $ \int_{I} A = |I|\mathbb{E}_I A \ge (1/100)|I| $ we conclude that  $ (II) = \int_{\mathcal{BL} \cap S_I} A \ge \frac{98}{1000}|I|$, that is, $ \frac{98}{1000}|I| \le \sum_{J \in \mathcal{BL} \cap S_I } |J| \mathbb{E}_J A $. Since $\mathbb{E}_J A \ge \frac{1}{10}$ for any large leaf $ J $, it follows that 
 $\sum_{J \in \mathcal{BL} \cap S_I} P_{J} \ge \frac{98}{10000}|I|$. Hence $\left(\sum_{J \in \mathcal{BL} \cap \mathcal{L}(S_I)} P_{J}\right) - P_I \ge \frac{2}{5000}|I|$.
	\end{proof}
\end{proposition}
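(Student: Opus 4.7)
The plan is to extract from the hypothesis that $I$ is basic both a lower bound on $\mathbb{E}_I A$ and an upper bound on $P_I$, then partition $I$ by the leaves of $S_I$ and decompose $\int_I A$ over the three types of leaves that can occur, and finally upgrade the integral lower bound on the large-leaf portion to a potential lower bound.

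First I would pin down $\mathbb{E}_I A$. Since $\mathcal{F}(I)$ is not stellar while $I$ is, clause (2) in the definition of stellar cannot be the reason $I$ is stellar, so clause (1) must apply, giving $\mathbb{E}_I A \ge 1/100$. Conversely $\mathbb{E}_{\mathcal{F}(I)} A < 1/100$, and the dyadic averaging identity $\mathbb{E}_{\mathcal{F}(I)} A = \tfrac{1}{2}(\mathbb{E}_I A + \mathbb{E}_{I'} A)$ (with $I'$ the sibling of $I$) together with $A \ge 0$ yields $\mathbb{E}_I A < 1/50$. This provides the a priori bound $P_I = (\mathbb{E}_I A)^2 |I| < |I|/2500$.

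Next I would observe that by the maximality of $S_I$ its leaves partition $I$ and fall into exactly three types: small dyadic leaves (on which $\mathbb{E} A \le C^{-2}$), non-stellar segments whose father is stellar (on which $\mathbb{E} A < 1/1000$, since such a segment fails clause (2) of being stellar), and large dyadic leaves (on which $\mathbb{E} A \ge 1/10$). Writing $\int_I A = |I|\mathbb{E}_I A \ge |I|/100$ and bounding the contributions of the first two classes by their respective cutoffs times $|I|$, I would obtain
\[
\int_{\mathcal{BL} \cap S_I} A \;\ge\; |I|\bigl(1/100 - 1/1000 - O(C^{-2})\bigr),
\]
which for $C$ sufficiently large is a fixed positive constant multiple of $|I|$.

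The final step is the conversion from integral to potential. On any large leaf $J$ the inequality $\mathbb{E}_J A \ge 1/10$ gives $P_J = (\mathbb{E}_J A)^2 |J| \ge \tfrac{1}{10}\mathbb{E}_J A \cdot |J|$, so
\[
\sum_{J \in \mathcal{BL} \cap S_I} P_J \;\ge\; \tfrac{1}{10}\int_{\mathcal{BL} \cap S_I} A,
\]
which comfortably beats the $P_I < |I|/2500$ loss and produces the required bound $\ge 2|I|/5000$. The only mildly delicate point I anticipate is the bookkeeping of the second paragraph, namely verifying that the three listed categories truly exhaust $\mathcal{L}(S_I)$; this is a direct consequence of the maximality built into the definition of $S_I$ together with what the definitions of stellar and final say at a segment with stellar parent.
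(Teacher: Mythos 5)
Your proof is correct and follows essentially the same route as the paper's: establish $\tfrac1{100}\le\mathbb{E}_IA<\tfrac1{50}$ from $I$ being a basic stellar segment (hence $P_I<|I|/2500$), partition $I$ by the leaves of $S_I$ into small leaves, final (non-stellar) segments with stellar father, and large leaves, bound the first two contributions by $O(C^{-2})|I|$ and $|I|/1000$, and convert the remaining integral lower bound over large leaves to a potential lower bound via $P_J\ge\tfrac1{10}|J|\mathbb{E}_JA$. The only minor nit is that your second category should be stated as final segments that are \emph{not} dyadic leaves, so the three classes genuinely partition $\mathcal L(S_I)$, but this does not affect the estimate.
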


\begin{corollary}
	$\sum_{\mathcal{SI}} |I|D_I^2 \succeq \frac{1}{C}$.
	\begin{proof}\label{stellar}
		Note that each $I_1 \in \mathcal{SI}$ is contained in a unique $S_I, I \in \mathcal{BI}$. Hence \begin{multline*} \sum_{\mathcal{SI}} |I|D_I^2 = \sum_{I \in \mathcal{BI}}\sum_{I_1 \in S_I}|I_1|D_{I_1}^2 = \sum_{I \in \mathcal{BI}} \left[\sum_{I_1 \in \mathcal{BL} \cap S_I} P_{I_1} - P_I \right] \ge \sum_{I \in \mathcal{BI}} \frac{3}{5000}|I| \ge \\ \frac{3}{5000}\sum_{I \in \mathcal{BL}} |I| \succeq \frac{1}{C}. \end{multline*}
		
The penultimate inequality holds because each large leaf is contained in exactly one basic segment, and thus $\sum_{\mathcal{BL}}|I| \le \sum_{\mathcal{BI}}|I|$.
	\end{proof}
\end{corollary}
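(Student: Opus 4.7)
The approach is to partition the stellar segments along the disjoint subtrees $\{S_I : I \in \mathcal{BI}\}$, apply the telescoping identity of the previous lemma inside each $S_I$, and close the argument with the previous Proposition together with a bound on the total measure of the large leaves. Every stellar segment has a unique highest stellar ancestor, which is basic by definition, and the $S_I$ were already shown to be pairwise disjoint; hence $\{S_I\}_{I \in \mathcal{BI}}$ partitions $\mathcal{SI}$, and the sum $\sum_{\mathcal{SI}} |I| D_I^2$ decomposes accordingly (adopting the convention $D_{I'}=0$ when $I'$ is a leaf of the global tree, since such $I'$ has no children).

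Applied to the subtree $S_I$ with root $I$, the telescoping lemma yields
$$\sum_{I' \in S_I \setminus \mathcal{L}(S_I)} \tfrac{|I'|}{4} D_{I'}^2 \;=\; \Bigl(\sum_{I' \in \mathcal{L}(S_I)} P_{I'}\Bigr) - P_I \;\ge\; \Bigl(\sum_{I' \in \mathcal{BL}\cap S_I} P_{I'}\Bigr) - P_I \;\ge\; \tfrac{2}{5000}|I|,$$
where the middle inequality simply drops the nonnegative potentials contributed by leaves of $S_I$ outside $\mathcal{BL}$ (noting that a large leaf of the global tree lying in $S_I$ is automatically a leaf of $S_I$), and the last inequality is the preceding Proposition. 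Summing over $I \in \mathcal{BI}$ and invoking the partition yields $\sum_{I \in \mathcal{SI}} |I| D_I^2 \succeq \sum_{I \in \mathcal{BI}} |I|$.

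It remains to show $\sum_{I \in \mathcal{BI}} |I| \succeq 1/C$, which I expect to be the only step carrying real geometric content; everything above it is combinatorial bookkeeping on the tree. Since each large leaf sits in a unique $S_I$, one has $\sum_{\mathcal{BI}}|I| \ge \sum_{\mathcal{BL}}|J|$, so it suffices to bound the total measure of large leaves from below. The leaves of Algorithm~1 partition the circle up to a null set, and on small leaves $\mathbb{E}_I A \le 1/C^2$; hence small leaves contribute at most $O(C^{-2})$ to $\int A = \pi/C$. Consequently the large leaves carry $\succeq 1/C$ of the mass of $A$, and since $0 \le A \le 1$ pointwise, $|J| \ge \int_J A$ on each large leaf, which gives $\sum_{\mathcal{BL}} |J| \succeq 1/C$ for $C$ large enough and finishes the proof.
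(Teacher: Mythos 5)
Your proof is correct and follows essentially the same route as the paper: partition $\mathcal{SI}$ via the disjoint subtrees $S_I$, apply the telescoping identity of the previous lemma in each $S_I$, invoke the preceding Proposition for the lower bound $\succeq |I|$, and finish by comparing the total lengths of basic segments and large leaves. The one place you go beyond the paper's text is that you actually justify $\sum_{J \in \mathcal{BL}} |J| \succeq 1/C$ (small leaves carry only $O(C^{-2})$ of $\int A = \pi/C$, and $0 \le A \le 1$ gives $|J| \ge \int_J A$ on large leaves), a step the paper simply asserts as the final $\succeq$; this is a worthwhile clarification but not a different argument.
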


Let's call the $k$-th level of the tree the set of segments - vertices of the dyadic partition tree having length $\frac{2\pi}{2^k}$ and denote it by $\mathcal{LA}_k$.

\begin{lemma}\label{Feuer} Let $ \mathcal R_k $ be the set of segments $ I $ of $ k $-th level obtained through the Algorithm 1 and such that $ \mathbb{E}_I A > 1/100 $. Then 
$\sum_{I \in \mathcal R_k } |I| \preceq C^{-q} $ with the implied constant independent of $ k $.
\end{lemma}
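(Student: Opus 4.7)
The plan is: for each $I \in \mathcal{R}_k$ isolate a subset $T_I \subset I \cap E^c$ of measure $\succeq_p |I|$ on which $|S_m A(x)| \ge c_p$ for some constant $c_p > 0$ depending only on $p$, with $m$ chosen uniformly across $\mathcal{R}_k$. Since the $T_I$'s are pairwise disjoint (subsets of disjoint level-$k$ segments) and all contained in $\{x \in E^c : |S_m A(x)| \ge c_p\}$, Markov's inequality combined with Hausdorff-Young and the hypothesis will give
\[
\sum_{I \in \mathcal{R}_k} |T_I| \;\le\; \bigl|\{x \in E^c : |S_m A(x)| \ge c_p\}\bigr| \;\le\; c_p^{-q}\,\|(1-A) S_m A\|_{L^q}^q \;\preceq_p\; C^{-q},
\]
from which $\sum |I| \preceq_p C^{-q}$ is immediate.

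A preliminary structural observation identifies the regime: for any $I \in \mathcal{R}_k$ with parent $\mathcal{F}(I)$, Algorithm 1 split $\mathcal{F}(I)$ (else $I$ would not exist), so $\mathbb{E}_{\mathcal{F}(I)} A < 1/10$; since $I$ is an equal half, $\mathbb{E}_I A \le 2\mathbb{E}_{\mathcal{F}(I)} A < 1/5$. Together with $\mathbb{E}_I A > 1/100$, this gives $|I \cap E^c| \ge 4|I|/5$. Any subset $T_I \subset I$ of measure $\ge |I|/2$ will then satisfy $|T_I \cap E^c| \ge 3|I|/10$ by pigeonhole, reducing the problem to producing such a $T_I$ on which $|S_m A|$ is bounded below.

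Hence the crux is: for a suitable $m = m(k)$, show $|\{x \in I : |S_m A(x)| \ge c_p\}| \ge |I|/2$ for every $I \in \mathcal{R}_k$. I would work with the Fej\'er sums $\sigma_m A = (m+1)^{-1}\sum_{j=0}^m S_j A$, which inherit the $A_p$-bound from the hypothesis by subadditivity of $|\cdot|_{A_p}$, and which are moreover nonnegative and bounded above by $\|A\|_\infty = 1$. First take $m$ large enough (quantitatively, $m \gtrsim 2^{2k}/C$ suffices) that a Cauchy-Schwarz estimate on the tail of the Fourier series of $\phi_I = \chi_I / |I|$ yields $|\mathbb{E}_I \sigma_m A - \mathbb{E}_I A| < 1/400$, so $\mathbb{E}_I \sigma_m A > 1/200$. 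Then distribute the integral $\int_I \sigma_m A \ge |I|/200$ over super- and sub-level sets of $\sigma_m A$ to locate a set of measure $\succeq_p |I|$ where $\sigma_m A \ge c_p$.

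The hardest point is precisely this distribution step. The naive Chebyshev argument, using only $0 \le \sigma_m A \le 1$, gives merely $|\{\sigma_m A \ge 1/400\} \cap I| \ge |I|/400$, which is not enough to beat $|I|/5$ and apply pigeonhole with $|I \cap E^c| \ge 4|I|/5$. To enlarge this measure one needs extra information preventing $\sigma_m A$ from concentrating its mass on a tiny portion of $I$. A natural source is the algorithmic bound $\mathbb{E}_{\text{ancestor}} A < 1/10$ at every scale above $|I|$: for the right choice of $m$ with Fej\'er kernel width comparable to $|I|$, the windowed averages defining $\sigma_m A$ are uniformly capped by a constant multiple of $1/10$ throughout and around $I$, which together with the lower bound on $\mathbb{E}_I \sigma_m A$ forces the distribution function of $\sigma_m A$ on $I$ to be nearly flat at a height $\asymp \mathbb{E}_I A$. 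Turning this heuristic into the required $|I|/2$ measure bound is the core technical step, and the rest of the argument then reduces to the clean Markov/pigeonhole conclusion sketched above.
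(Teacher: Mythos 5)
Your outline is built on the right tools — replacing the partial sums by the Fej\'er means $\sigma_m A = \Phi_m * A$ (using subadditivity of $|\cdot|_{A_p}$), appealing to Hausdorff--Young to pass from $A_p$ to $L^q$, choosing $m\sim 2^k$, and summing over the disjoint segments of $\mathcal R_k$. This matches the paper's strategy. But you have not closed the proof, and the step you call ``the core technical step'' — producing a set $T_I\subset I$ of measure $\ge |I|/2$ on which $\sigma_m A\ge c_p$ — is a genuine gap, and moreover it is not the statement you actually need.

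The paper sidesteps the level-set/pigeonhole route entirely. Because $A$ is an indicator and $\Phi_m\ge 0$, the product $(1-A)\,\sigma_m A$ is nonnegative, and one works directly with its \emph{mean} on $I$: using $\mathbb E_I A\in(1/100,\,1/5)$ and the fact that at scale $1/m\sim |I|$ the Fej\'er smoothing spreads the mass of $A|_I$ over a constant fraction of $I$ while the quadratic ``self-overlap'' $\int_I A\,(\Phi_m*A)$ loses a definite fraction, one obtains $\mathbb E_I\bigl[(1-A)\,\sigma_m A\bigr]\succeq 1$ with an absolute constant. Then Jensen (or the power-mean inequality) gives $\mathbb E_I\bigl|(1-A)\,\sigma_m A\bigr|^q\succeq 1$, and summing over $I\in\mathcal R_k$ yields $\sum_I|I|\preceq \|(1-A)\sigma_m A\|_{L^q}^q\preceq C^{-q}$. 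No super-level set of $\sigma_m A$ alone, and no pigeonhole against $E^c$, is ever required; the factor $(1-A)$ is kept attached to $\sigma_m A$ throughout, which is exactly what makes the integral estimate tractable. (The requested bound $|I|/2$ is in fact stronger than what the configuration allows when $A|_I$ is concentrated near one endpoint of $I$ and is also stronger than what the conclusion needs; once the mean bound is known, Markov applied to the \emph{product} $(1-A)\sigma_m A$ — not to $\sigma_m A$ by itself — recovers a level set of measure $\succeq|I|$ automatically.) So you should replace the plan of ``large level set of $\sigma_m A$, then pigeonhole with $E^c$'' by the direct estimate of $\mathbb E_I\bigl[(1-A)\sigma_m A\bigr]$; that is where the lower bound actually lives.
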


The specific value $ 1/100 $ in this lemma is of no particular importance, but the implied constant in its assertion would depend on the choice of this value.
	
\begin{proof}
By the assumption of the theorem, $\left| (1-A)S_m(A) \right|_{A^p} = O\left(\frac{1}{C}\right)$. Hence for any $m$, also $ \left|(1-A)(\Phi_m \ast A) \right|_{A^p} = O\left(\frac{1}{C}\right)$, where $\Phi_m = \frac{D_0 + D_1 + \ldots + D_m}{m+1} $ is the Fejer kernel. By the (dual) Hausdorff-Young inequality, $\left\|(1-A)(\Phi_m \ast A) \right\|_{L^q} \preceq \left|(1-A)(\Phi_m \ast A) \right|_{A_p} \preceq \frac{1}{C}$. Let us apply this for $m = 2^k$, and suppose $x \in I$ for $I \in \mathcal{R}_k $. Note that $\Phi_m(y) \ge 0$ $\forall y$ and $\Phi_m(y) \succeq 2^m$ for $y \preceq 2^{-m}$, that is, $(\Phi_m \ast A)(x) \succeq \int_{|y-x| < 2^{-m}} 2^mA(y) \succeq 1$. Notice that the alogrithm implies that $ \mathbb{E}_I A < 1/ 5 $ for any interval $ I $ of the decomposition (otherwise the parent interval would satisfy $ \mathbb{E}_I A > 1/ 10 $, and thus $ I $ shouldn't have appeared), and therefore $ A(x) =0$ for at least $ 4/5 $-th points of the interval $ I $. It follows that    
$\mathbb{E}_I |(1-A)(\Phi_m \ast A)| \succeq 1$, and thus 
\[ C^{-q} \succeq \mathbb{E}|(1-A)(\Phi_m \ast A)|^q \ge  \sum_{I \in \mathcal{R}_k} |I|\mathbb{E}_I|(1-A)(\Phi_m \ast A)|^q \succeq \sum_{I \in \mathcal{R}_k } |I| . \]	\end{proof}

Now we turn to the proof of Theorem \ref{main-easy} properly.

Fix an arbitrary $\alpha$, $2-p > \alpha > 0$. Note that since by Corollary 10 it holds that $\sum_{I \in \mathcal{SI}} |I|D_I^2 \succeq \frac{1}{C}$, then for some $M$ it is true that $\sum_{I \in \mathcal{SI}, \frac{1}{M} \ge D_I \ge \frac{1}{2M}} |I|D_I^2 \succeq \frac{1}{CM^{\alpha}}$. Take this $M$ and call the stellar segments with $\frac{1}{M} \ge D_I \ge \frac{1}{2M}$ interesting. Denote the set of such segments by $\mathcal{IS}$.

Now define the function $F_i$ as follows:

\begin{enumerate}
	\item $F_i(x) = 0$, $x \notin \mathcal{LA}_i \cap \mathcal{IS}$.
	
	\item Let $I \in \mathcal{LA}_i \cap \mathcal{IS}$, $I_1, I_2$ - its children. Then if $\mathbb{E}_{I_1}A > \mathbb{E}_{I_2}A$, then $F_i(x) = 1$, $x \in I_1$, $F_i(x) = -1$, $x \in I_2$. Otherwise $F_i(x) = -1$, $x \in I_1$, $F_i(x) = 1$, $x \in I_2$.
\end{enumerate}

It is easy to see that $\mathcal{TS}_i = (F_i, \mathcal{LA}_i \cap \mathcal{IS})$ is a $T$-system. Let us verify that Lemma 2 is satisfied for these numbers $p, r=q, \alpha, M, C$, as well as the $T$-sequence $\mathcal{P} = {\mathcal{TS}_0, \mathcal{TS}_1, \mathcal{TS}_2, \ldots}$ and function $A$.

\begin{enumerate}
	\item It is satisfied since $\mathbb{E}(F_i A) = \sum_{I \in \mathcal{LA}_i \cap \mathcal{IS}} |I| \mathbb{E}(F_i A) \succeq \frac{\mu_i}{M}$.
	
	\item Satisfied by Lemma \ref{Feuer}.
	
	\item $\sum_{I \in \mathcal{IS}} |I|D_I^2 \succeq \frac{1}{CM^{\alpha}}$ by the definition of interesting segments, hence $\sum_i \mu_i = \sum_{I \in \mathcal{IS}} |I| \succeq \frac{M^{2-\alpha}}{C}$.
\end{enumerate}

Applying Lemma \ref{T-sec} now, we get that $ \left|A \right|_{A_p} \succeq C^{\frac{q(2-p)-1}{p}}$, which completes the proof of Theorem \ref{main-easy}. 
	
	\section{Auxiliary Results}
	
The proof of Theorem \ref{main} follows the same scheme as the one of Theorem \ref{main-easy}, but more auxiliary results are needed.
	
Denote $A = \Re \widetilde{A}, B = \Re \widetilde{B}$. Given a segment $I \subset [0, 2\pi]$, we set
	\[ IA(I) = \min  \left( |I|, \left| \int_I A \right| \right). \]
	
Let $I_1, \dots, I_k$ be a disjoint set of subintervals of the interval $I$. The weight of the set is defined as $\sum_{i=1}^k IA(I_i)$. The $A_0$-measure of a segment $I$ of the circle is defined as
$	 A_0(I) := \sup_{(I_1, \dots, I_k)} \sum_i IA(I_i), $
	where the supremum is taken over all disjoint sets of subintervals. Some trivial properties of the $ A_0 $-measure are collected in the following
	
	\begin{proposition}\label{A0_prop}\begin{enumerate}
			\item If $a < b < c$, then $A_0([a, c]) \ge A_0([a, b]) + A_0([b, c])$;
			
			\item $\min{ \int_I A, |I| } \le A_0(I) \le |I|$;
			
			\item $\int_I \min(|A|, 1) \le A_0(I) \le \int_I |A|$.
		\end{enumerate}
	\end{proposition}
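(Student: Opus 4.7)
The plan is to verify the three parts in sequence; each reduces to the definition of $A_0$ as a supremum combined with an elementary estimate on the weight functional $IA$.

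For (1), given any disjoint collection $(J_i)$ of subintervals of $[a,b]$ and any disjoint collection $(K_j)$ of subintervals of $[b,c]$, their union is a disjoint collection of subintervals of $[a,c]$, so $\sum_i IA(J_i) + \sum_j IA(K_j) \le A_0([a,c])$; taking suprema independently over the two collections yields the desired inequality. The upper bounds in (2) and (3) are equally immediate: summing $IA(I_i) \le |I_i|$ over any disjoint subcollection $(I_i)$ of subintervals of $I$ gives $A_0(I) \le |I|$, while summing $IA(I_i) \le \left|\int_{I_i} A\right| \le \int_{I_i} |A|$ gives $A_0(I) \le \int_I |A|$. The lower bound in (2) is witnessed by the singleton family $\{I\}$, which is admissible and whose weight is exactly $IA(I) = \min(|I|, |\int_I A|)$.

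The only nontrivial piece is the lower bound in (3). Since $\widetilde A \in A_1$ has an absolutely convergent Fourier series, $A = \Re \widetilde A$ is continuous, hence uniformly continuous, on $\mathbb T$. I would fix a large integer $n$, partition $I$ into $n$ equal subintervals $J_1^{(n)}, \ldots, J_n^{(n)}$, and define the step function $g_n$ on $I$ by $g_n(x) = IA(J_k^{(n)})/|J_k^{(n)}|$ for $x \in J_k^{(n)}$. By construction $\int_I g_n = \sum_k IA(J_k^{(n)}) \le A_0(I)$. Uniform continuity of $A$ ensures that $|J_k^{(n)}|^{-1}\int_{J_k^{(n)}} A \to A(x)$ uniformly in $x \in I$ (where $k$ denotes the index of the piece containing $x$), so $g_n \to \min(1, |A|)$ uniformly on $I$; passing to the limit yields $\int_I \min(|A|, 1) \le A_0(I)$, as required.

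No step here is really hard: every inequality but the last is a one-line consequence of the definitions. The only item requiring any care is the Riemann-sum style limit argument in the lower bound of (3), and that step is legitimized by the continuity of $A$ inherited from $\widetilde A \in A_1$.
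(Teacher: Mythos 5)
Your proof is correct. The paper states this proposition without proof (the author calls these properties ``trivial''), so there is nothing to compare against; your arguments for (1), the two upper bounds, and the lower bound in (2) are exactly the one-line consequences of the definitions one would expect. For the lower bound in (3), your Riemann-sum limit argument is legitimate since $A = \Re\widetilde A$ with $\widetilde A \in A_1$ is indeed continuous; one could alternatively avoid continuity by invoking Lebesgue differentiation along a dyadic subdivision and dominated convergence (the step functions $g_n$ are bounded by $1$), but that generality is not needed here.
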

	
	The $A_0$-measure of a set of non-intersecting segments is defined as the sum of the $A_0$-measures of the segments in it.
	
	Let $t \in (0, 1)$. We define the \textbf{$t$-norm of discrepancy of the interval $I$} as follows,
	\[
	 \mathcal{DA}(I, t) := \sup_{I_1 \subset I: \, t|I| \le |I_1| \le |I|} \left| \mathbb{E}_{I_1}A - \mathbb{E}_I A \right|. \]
	In the situation when $t = \frac{1}{10}$, the argument $t$ is omitted, and the quantity $\mathcal{DA}(I, t)$ is called simply the norm of discrepancy and denoted by $\mathcal{DA}(I)$. Clearly the supremum in this definition is attained.
	
	\begin{proposition}
If $t_1 < t_2$, then $\mathcal{DA}(I, t_1) \ge \mathcal{DA}(I, t_2)$. If $t < \frac{1}{10} $ then $ \mathcal{DA}(I, 2t) \ge \frac{1}{5} \mathcal{DA}(I, t)$.
\end{proposition}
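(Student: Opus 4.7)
The first assertion is immediate: for $t_1 < t_2$, every interval $I_1 \subset I$ with $t_2 |I| \le |I_1| \le |I|$ also satisfies $t_1 |I| \le |I_1| \le |I|$, so the supremum defining $\mathcal{DA}(I, t_1)$ is taken over a larger family than that defining $\mathcal{DA}(I, t_2)$, and the desired inequality follows.

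For the second assertion I would argue by contradiction. Normalise $|I| = 1$, set $m = \mathbb{E}_I A$, $D = \mathcal{DA}(I, t)$, and $D' = \mathcal{DA}(I, 2t)$, and suppose $D' < D/5$. Since the supremum defining $D$ is attained, pick a subinterval $I_1 = [c_0, d_0] \subset I$ of length $r \in [t, 1]$ with $|\mathbb{E}_{I_1} A - m| = D$; replacing $A$ by $-A$ if needed, one may assume $\mathbb{E}_{I_1} A - m = D$. If $r \ge 2t$ then $I_1$ itself is admissible in the definition of $D'$, giving $D' \ge D$, a contradiction; so from now on $r < 2t$.

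The next step is to enlarge $I_1$ by exactly $2t$ on one side while staying inside $I$. If both $[c_0 - 2t, d_0]$ and $[c_0, d_0 + 2t]$ failed to lie in $I$, then $c_0 < 2t$ and $1 - d_0 < 2t$, forcing $r = d_0 - c_0 > 1 - 4t > 6t > 2t$ by the hypothesis $t < 1/10$, contradicting $r < 2t$. Without loss of generality, take $J = [c_0, d_0 + 2t] \subset I$. The subinterval $[d_0, d_0 + 2t]$ has length exactly $2t$, and $J$ has length $r + 2t \ge 2t$, so both are admissible for $D'$. Writing $\gamma = \mathbb{E}_{[d_0, d_0 + 2t]} A - m$, one has $|\gamma| \le D'$, and computing $\mathbb{E}_J A$ as a weighted average of $\mathbb{E}_{I_1} A$ and $\mathbb{E}_{[d_0, d_0 + 2t]} A$ gives
\[ \mathbb{E}_J A - m = \frac{rD + 2t\gamma}{r + 2t}. \]

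Admissibility of $J$ in the definition of $D'$ now yields $rD + 2t\gamma \le (r + 2t)D' < (r + 2t)D/5$, whence $2t\gamma < D(2t - 4r)/5 \le -2tD/5$ (using $r \ge t$), so $\gamma < -D/5$. This contradicts $|\gamma| \le D' < D/5$, and the proof is complete. The main technical point is the choice of extension: enlarging by exactly $2t$ (rather than by the minimum $2t - r$ needed to make $J$ admissible) makes $[d_0, d_0 + 2t]$ itself admissible for $D'$, and this double use of the hypothesis on $D'$ is what yields the clean constant $1/5$; the naive minimum extension gives only a weaker bound of the order $1/15$.
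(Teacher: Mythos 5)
Your proof is correct and takes essentially the same route as the paper: reduce to the case where the extremal interval $I'$ is shorter than $2t|I|$, extend it to an admissible interval for $\mathcal{DA}(I,2t)$, and use the weighted-average identity to transfer the bound. The only cosmetic differences are that the paper extends $I'$ by an adjacent interval of length $2|I'|$ (so the identity becomes $3\mathbb{E}_{I_1\cup I'}=2\mathbb{E}_{I_1}+\mathbb{E}_{I'}$ directly) and argues directly rather than by contradiction, whereas you extend by a fixed length $2t|I|$ and use $r\ge t$ to close the estimate.
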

		
		\begin{proof}
The first inequality is true since the supremum is taken over a larger set. To prove the second, denote by $I'$ the interval on which the maximum in $\mathcal{DA}(I, t)$ is attained. Withot loss of generality, we assume that $ \mathbb{E}_I A = 0 $. If $|I'| \ge 2t|I|$, then $\mathcal{DA}(I, 2t) = \mathcal{DA}(I, t)$. If not, consider an arbitrary interval $ I_1 \subset I $ adjacent to $  I^\prime $ of the length\footnote{$ t < 1/10$ ensures the existence of such an interval within $I$.} $ |I_1 | = 2 |I^\prime | $. Then $|\mathbb{E}_{I_1\cup I^\prime }A | \le \mathcal{DA}(I, 2t) $, $ |\mathbb{E}_{I_1}  A | \le \mathcal{DA}(I, 2t)$, and $ 3 \mathbb{E}_{ I_1\cup I^\prime } A = 2 \mathbb{E}_{ I_1} A +  \mathbb{E}_{I^\prime} A $ hence \[ \mathcal{DA}(I, t ) = |\mathbb{E}_{I^\prime }  A | \le 3|\mathbb{E}_{I_1\cup I^\prime }A | + 2|\mathbb{E}_{I_1}A | \le 5\mathcal{DA}(I, 2t) . \]
\end{proof}

	\begin{proposition}\label{A0DA}
		\begin{equation} \label{1100} A_0(I) \ge |I| \min\left(\frac{1}{100}, \frac{1}{100} \mathcal{DA}(I)\right).
		\end{equation}
			\end{proposition}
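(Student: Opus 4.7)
The plan is a short case analysis, distinguishing whether the mean $a := \mathbb{E}_I A$ is already comparable to the discrepancy $d := \mathcal{DA}(I)$ or much smaller than it. Since $A = \Re\widetilde{A}$ is real, so are $a$ and $d$; moreover, for any subinterval $J \subset I$ one has $IA(J) = |J|\min(1, |\mathbb{E}_J A|)$, and the singleton family $\{J\}$ is already a valid competitor in the supremum defining $A_0(I)$. So in each case I only need to exhibit one convenient $J$.

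First I would dispose of the case $|a| \ge d/20$ by taking $J = I$ itself: this immediately gives $A_0(I) \ge |I|\min(1, |a|)$, and a quick comparison with the regimes $d\le 1$ and $d>1$ shows that this exceeds $|I|\min(1,d)/100$ in both situations.

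For the complementary case $|a| < d/20$, I would appeal to the remark that the supremum defining $\mathcal{DA}(I)$ is attained: pick $I^* \subset I$ with $|I^*| \ge |I|/10$ and $|\mathbb{E}_{I^*} A - a| = d$. The reverse triangle inequality then forces $|\mathbb{E}_{I^*} A| \ge 19d/20$, so taking $J = I^*$ as the sole $A_0$-competitor yields
\[ A_0(I) \ge IA(I^*) = |I^*|\min\bigl(1, |\mathbb{E}_{I^*} A|\bigr) \ge \frac{|I|}{10}\min\bigl(1, \tfrac{19d}{20}\bigr), \]
which again comfortably exceeds $|I|\min(1,d)/100$ for both $d\le 1$ and $d>1$.

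There is no real obstacle beyond bookkeeping; the bound is slack by a constant factor in every branch, which in fact gives some room to optimize the constant $1/100$ if one wished. The only structural ingredient used is the existence of a discrepancy-maximizing subinterval $I^*$ whose length is bounded below by the fixed fraction $|I|/10$ built into the definition of $\mathcal{DA}(I) = \mathcal{DA}(I, 1/10)$ -- this is exactly what prevents $I^*$ from being too short to supply a useful share of the $A_0$-measure in Case 2.
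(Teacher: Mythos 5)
Your proof is correct and is essentially the same argument the paper uses, just organized as a direct two-case dichotomy ($|\mathbb{E}_I A|$ comparable to $\mathcal{DA}(I)$ vs.\ much smaller) instead of by contradiction. In both cases the single-competitor mechanism (take $J=I$ in the first case, $J=I^*$ the discrepancy-maximizer in the second, with the reverse triangle inequality supplying $|\mathbb{E}_{I^*}A|\gtrsim\mathcal{DA}(I)$) is exactly the paper's.
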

		
		\begin{proof}
Arguing by contradiction, suppose \eqref{1100} is false. Then $|\mathbb{E}_I A| < \min\left(\frac{1}{100}, \frac{1}{100} \mathcal{DA}(I)\right)$, otherwise take the entire interval as a partition for $A_0$.
			
			Take in the definition of $A_0$ measure one of the segments of the partition that achieves the maximum in $\mathcal{DA}$ (call it $I'$). Then $|\mathbb{E}_{I'}A| \ge \frac{99}{100} \mathcal{DA}(I)$. But then $$A_0(I) \ge |I'| \min(1, |\mathbb{E}_{I'}A|) \ge |I| \min\left(\frac{1}{10}, \frac{99}{1000} \mathcal{DA}(I)\right) , $$
			a contradiction. 
\end{proof}
	
Given a set $U$, a probability measure $\mu$ on it, a real measurable function $F$ on $U$, and fixed real numbers $q_1 \ge 1, c > 0 $, we call a real number $y $ a \textbf{$(q_1, c)$-almost midpoint} if for all $t$ the inequality $\int_U |F - t|^{q_1} d\mu \ge c|y - t|^{q_1}$ holds. By the H\"older inequality
	
	\begin{remark}\label{almostmid}
		For any measurable $ \Sigma \subset U $ the number $\mu \left( \Sigma \right)^{ -1 }\int_\Sigma F d\mu $ is a $(q_1, 1/ \mu ( \Sigma ) )$-almost midpoint for any $ q_1 \ge 1 $.
	\end{remark}
	
We omit the constant $ c $ in this definition if it can be chosen uniformly with respect to parameters of the problem and call the number $y$ simply a $q_1$-almost midpoint.
	

For an interval $I$ let
\[ \mathcal{U}(I) = \bigl\{ (a, b): a, b \in I, b > a, |a - b| > \frac{|I|}{100} \bigr\} ,\] 
and let $\mu_U$ be the planar Lebesgue measure on this set normalized to $ 1 $, that is, $ d \mu_U = \frac{C{\mu}}{|I|^2} da db $ for some $ C_\mu $. In fact, one easily verifies that $C_{\mu} = 2\frac{100^2}{99^2}$, but we do not need the specific value.
	
Given a differentiable function $T$ on the interval $I$, let 
\[ G_{I, T} (a, b ) := \frac{T(b) - T(a)}{b - a}, \; (a,b) \in \mathcal{U}(I) . \]
	
Given an interval $I$ and a differentiable function $T$ on it, a real number $x$ is \bfseries $(q_1, I, T)$-balanced \mdseries if it is a $q_1$-almost midpoint for the set $U = \mathcal{U}(I)$, the measure $\mu_U$ on it, and the function $G_{I, T}$.
	
Finally, let us define a \textbf{balanced point}. For an interval $I$, a continuously differentiable real function $T$ on it, and a real number $q_1 \ge 1$, we call a point $t$ of the interval $I$ balanced for $T, I$ if for all $x$, $0 < x \le |I|$, there exists an interval $ I(x) \subset I $ such that
	
	\begin{enumerate}	
	\item $t \in I(x)$,
	
	\item $\frac{x}{100} \le |I(x)| \le x $,
	
	\item The number $T'(t)$ is $(q_1, I(x), T)$-balanced.
	
\end{enumerate}
	
 \begin{lemma}\label{midpoint} For any continuously differentiable $T$ within any interval $I$, there exists a balanced point for $T, I$. Moreover, for a fixed $|I|$, there exists a specific non-negative function $f(x,y)$ on the set 
$\mathcal{U}(I)$ with mean value $1$, bounded by an absolute constant independent of $|I|$, such that there exists a balanced point $t_0$ such that $T'(t_0) = \mathbb{E}_{U} f(a,b) \frac{T(b) - T(a)}{b - a}$, and this does not depend on the function $T$.
\end{lemma}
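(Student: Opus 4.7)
The plan is to construct $t_0$ as the intersection of a shrinking nested sequence of intervals, chosen so that a specific weighted average of $T'$ is preserved across all scales. For a subinterval $J \subset I$, set $\mathcal{A}_T(J) := \mathbb{E}_{\mathcal{U}(J)} G_{J,T}$. By Remark \ref{almostmid} applied with $\Sigma = \mathcal{U}(J)$, the number $\mathcal{A}_T(J)$ is automatically a $q_1$-almost midpoint for $G_{J,T}$ with uniformly bounded constant, so it will suffice to produce $t_0 \in I$ such that for every $x \in (0, |I|]$ there is an interval $I(x) \subset I$ with $t_0 \in I(x)$, $|I(x)| \in [x/100, x]$, and $T'(t_0) = \mathcal{A}_T(I(x))$.

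Writing $T(b) - T(a) = \int_a^b T'(u)\,du$ and applying Fubini yields $\mathcal{A}_T(J) = \int_J T'(s) W_J(s)\,ds$, where
\[ W_J(s) = \frac{C_\mu}{|J|^2} \iint_{(a,b)\in \mathcal{U}(J),\, a<s<b} \frac{da\,db}{b-a} \]
is a bounded, continuous, strictly positive probability density on $J$, obtained as a translation-dilate of a universal template depending only on $|J|$. In particular $J \mapsto \mathcal{A}_T(J)$ is continuous in the position and size of $J$, and $\mathcal{A}_T(J) \to T'(p)$ as $J$ shrinks to a point $p$.

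Setting $I_0 := I$, I would inductively build $I_{n+1} \subset I_n$ with $|I_{n+1}|/|I_n| = 1/100$ satisfying $\mathcal{A}_T(I_{n+1}) = \mathcal{A}_T(I_n)$. Consider the continuous one-parameter family of subintervals $J_s \subset I_n$ of length $|I_n|/100$ parametrized by the left endpoint $s$. The crux is that $\mathcal{A}_T(I_n)$ lies in the range of $s \mapsto \mathcal{A}_T(J_s)$; I would establish this by expressing $W_{I_n} = \int W_{J_s}\,d\sigma(s)$ as a convex combination for a nonnegative probability measure $\sigma$ on the admissible range of $s$ (a convolution-type equation solvable from the explicit form of $W$, with boundary mismatches at $\partial I_n$ absorbed by letting $|I_{n+1}|$ vary within the admissible window $[|I_n|/10000, |I_n|/100]$). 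This yields $\mathcal{A}_T(I_n) = \int \mathcal{A}_T(J_s)\,d\sigma(s)$ as a convex combination of values of a continuous function, hence in its range, and the intermediate value theorem produces the required $I_{n+1}$.

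Taking $t_0 := \bigcap_n I_n$ (a single point since $|I_n| \to 0$), continuity of $T'$ gives $T'(t_0) = \lim_n \mathcal{A}_T(I_n) = \mathcal{A}_T(I_0)$. For any $x \in (0,|I|]$, the unique $n$ with $|I_n| \in [x/100, x]$ provides a valid $I(x) := I_n$, verifying balance. The ``moreover'' clause is then satisfied by $f \equiv 1$: it is nonnegative, bounded by the absolute constant $1$, has mean $\mathbb{E}_U f = 1$, is independent of $T$, and the formula $T'(t_0) = \mathbb{E}_U f \cdot G_{I,T}$ reduces to $T'(t_0) = \mathcal{A}_T(I_0)$, precisely what the construction produces. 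The main technical obstacle is the inductive step, specifically verifying the convex-combination decomposition of $W_{I_n}$ with nonnegative $\sigma$: since $W_{I_n}$ is a dilation of the template $W_{J_s}$ rather than its translate, the deconvolution requires care and the boundary behavior must be controlled using the flexibility in $|I_{n+1}|$.
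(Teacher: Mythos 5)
The high-level architecture of your proof matches the paper's: a nested sequence of shrinking subintervals of $I$, at each scale a subset $\Sigma_k \subset \mathcal{U}(J_k)$ whose average of $G_{J_k,T}$ equals the target value, Remark~\ref{almostmid} to turn those averages into $q_1$-almost midpoints with uniform constant, the mean value theorem and continuity of $T'$ to identify $T'(t_0)$, and $t_0 = \bigcap_k J_k$. The ``moreover'' clause via a universal $f$ (your $f\equiv 1$ versus the paper's normalized indicator of $\mathcal{M}_-\cup\mathcal{M}_+$) is equally fine, \emph{provided} the nested construction goes through.

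The genuine gap is exactly the step you flag: the claim that $\mathcal{A}_T(I_n)$ lies in the range of $s\mapsto\mathcal{A}_T(J_s)$ over subintervals $J_s$ of length $|I_n|/100$. Because you average $G$ over \emph{all} of $\mathcal{U}(J)$, this claim is, by duality, \emph{equivalent} to exhibiting a nonnegative measure $\sigma$ with $W_{I_n} = \int W_{J_s}\,d\sigma(s)$; this is precisely what remains unproven, and the difficulty is not merely a boundary nuisance. Near $\partial I_n$ the density $W_{I_n}$ vanishes linearly ($W(u) \sim c\,u$ as $u\to 0$), whereas a superposition $\int W_{J_s}(u)\,g(s)\,ds$ with a bounded density $g$ vanishes like $u^2$ there, since each $W_{J_s}$ itself vanishes linearly at the common endpoint; matching forces $\sigma$ to carry atoms at the endpoints of $I_n$, and with the atoms in place the smooth part still has to reproduce the bump $W_{I_n}$ exactly from shifted narrow copies of itself, which is a nontrivial refinability property you have not established. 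The paper sidesteps this entirely: its choice of triangular regions $\mathcal{M}_\pm(J)$, of equal area, gives $Y = \tfrac12(Y_-+Y_+)$ \emph{by definition}, and the continuous translate family $\mathcal{M}_\tau(J)$ interpolating from $\mathcal{M}_-$ to $\mathcal{M}_+$ makes the intermediate value argument immediate -- no deconvolution, no positivity to check.

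Your proposed patch, letting $|I_{n+1}|$ vary in $[|I_n|/10000, |I_n|/100]$, introduces a second gap: it destroys the coverage you need. The definition of a balanced point requires, for \emph{every} $x\in(0,|I|]$, an interval $I(x)\ni t_0$ with $|I(x)|\in[x/100,x]$. If consecutive lengths may drop by a factor up to $10^4$, a single step can jump past the window $[x/100,x]$ (ratio only $100$), leaving some $x$ with no admissible $I(x)$. The paper's contraction ratio is a fixed $\approx 0.52$, well inside $[1/100,1]$, so the scales $|J_k|$ hit every dyadic window. If you want the flexibility of variable length, you must keep the ratio within a factor-of-$100$ band, which removes exactly the freedom you were counting on to absorb boundary errors in the deconvolution.
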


\begin{proof}
Given a segment $ J_1 = J $ define $ Y_\pm = \mathbb{E}_{ \mathcal M_\pm  (J) } G_{ J, T}$, $ Y = \mathbb{E}_{ \mathcal M_- (J) \cup \mathcal M_+ (J) } G_{ J, T} $ with the sets $ \mathcal M_\pm ( J ) $ chosen as shown in the picture, that is, $\mathcal M_- (J) = \{ (s,t) \in \mathcal U (J): s-a< 0.48 |J| ; \, 0.04 |J| < t-a < 0.52 |J| \} $, and $ \mathcal M_+ (J) \subset \mathcal U (J)  $ is a translation of the triangle $ \mathcal M_- (J) $ along the line $ s = t $ such that the interiors of $ \mathcal M_- (J) $ and $ \mathcal M_+ (J)  $ do not intersect, and the average is taken with respect to the Lebesgue measure. 

\begin{tikzpicture}[scale=4.5]
  \draw[->] (0,0) -- (1.1,0);
  \draw[->] (0,0) -- (0,1.1);


  \draw[dashed] (0,0) -- (1,1);
\draw[dashed] (1,0) -- (1,1);
\draw (0,1) -- (1,1);

  \draw[dashed] (1,0) -- (1,1);

\draw[green!70!black, thick] (0.48,0.52) -- (0.48,1) -- (0.96,1) -- cycle;
\begin{scope} \clip (0,0) -- (1,1) -- (0,1) -- cycle;  
\fill[green!80,pattern=north west lines,pattern color=green!80!black] (0.48,0.52) -- (0.48,1) -- (0.96,1) -- cycle; \end{scope} 
\draw[green!70!black, thick] (0,0.04) -- (0,0.52) -- (0.48,0.52) -- cycle;
\begin{scope} \clip (0,0) -- (1,1) -- (0,1) -- cycle;  
\fill[green!80,pattern=north west lines,pattern color=green!80!black] (0,0.04) -- (0,0.52) -- (0.48,0.52) -- cycle; \end{scope} 
\draw[violet!70!black, thick] (0.35,0.39) -- (0.35,0.87) -- (0.83,0.87) -- cycle;
\begin{scope}  \clip (0,0) -- (1,1) -- (1,0) -- cycle; 
\fill[violet!80,pattern=north west lines,pattern color=violet!80!black] (0.35,0.39) -- (0.35,0.87) -- (0.83,0.87) -- cycle; \end{scope} 

  \node at (0.65,0.94) {$\mathcal{M}_+ (J)$};
  \node at (0.15,0.4) {$\mathcal{M}_- (J)$};
  \node at (0.2,0.65) {$\mathcal{M}_\tau (J)$};

  \draw [decorate,decoration={brace,mirror,amplitude=8pt}] (0,0) -- (1,0) node[midway, yshift=-0.5cm] { $J$};
\end{tikzpicture}

Let $ \mathcal M_\tau  (J) = \mathcal M_-  (J) + (1,1) \tau $, $ \tau \in [\tau_- , \tau_+  ] \equiv [0, 0.48] $. Clearly  $ \mathcal M_\tau  (J)\subset \mathcal U (J) $,  $ Y ( \tau_\pm )  = Y_\pm $, $ Y = \frac{ Y_- + Y_+ }2 $, and the function $ Y ( \tau )  = \mathbb{E}_{ \mathcal M_\tau  (J) } G_{ J, T}$ is continuous in $ \tau $. By the continuity, there exists a $ \tau \in  [\tau_1, \tau_2 ]$ such that $ Y(\tau ) = Y $. Define the new segment $ J_2 $ to be the projection of $ \mathcal M_\tau (J )$ (the violet triangle in the picture) on the $x$ axis.  

Iterating this procedure we obtain a sequence of nested segments $ J_k $, $ | J_{ k+1 } | \le \frac 23| J_k | $. Let $ t_0 $ be their (unique) common point, $ t_0 = \cap_k J_k $. Let us check that this point verifies all the assertions of the lemma. 

First, $ Y $ is a $ ( q_1, 1/4 )$-almost midpoint for the function $ G_{ J , T } $ for each $ k $ by remark \ref{almostmid}. By the mean value theorem, for each $ k $ there exist points $ \xi_k , \eta_k \in J_k $ such that $ Y =  G_{I, T} (\xi_k, \eta_k ) $. In turn, $ G_{I, T} (\xi_k, \eta_k ) = \frac{T( \xi_k ) - T(\eta_k )}{\xi_k  - \eta_k } = T^\prime ( s_k ) $ for some $ s_k \in J_k $ by elementary calculus. The continuity of $ T^\prime $ implies that $ T^\prime ( s_k ) \longrightarrow T^\prime (t_0) $, hence $ T^\prime (t_0) = Y $. Then, the interval $ I (x) $ is taken to be the one of $ J_k $ for which $ | J_k | \le x \le | J_{ k-1} | $. Finally, the universal function $ f $ is just the indicator of $ \mathcal M_- (J) \cup \mathcal M_+ (J)  $. \end{proof}

Now we are ready to present the dyadic decomposition algorithm for proving the main theorem.

\section{Dyadic Decomposition Algorithm}

The decomposition algorithm will proceed through phases, and in the $i$-th phase, it will be determined by four real numbers $K_i > 1, C_i > 1, 0 < r_i < 1, 0 < \Delta_i < 1$. In the description of a specific phase, indices will sometimes be omitted. These numbers are set as follows:

\begin{enumerate}
\item $K_1 = 10^{10}(1 + \frac{1}{\delta}), C_1 = \frac{1}{A_0([0, 2\pi])}, r_1 = \frac{1}{1000}, \Delta_1 = 10^{-10}$
\item $K_{i+1} = \frac{11}{10}K_i, C_{i+1} = K_iC_i, r_{i+1} = r_i + \Delta_i, \Delta_i = \frac{1}{K_i}$ for $i > 1$.
\end{enumerate}

Note that by Proposition \ref{A0_prop}, $C_1 \ge \frac{1}{|A|_{L_1}} \ge C$.

\begin{proposition}
Let $F$ be a function on the circle. Then $|F|_{A^p} \ge \frac{|F|_{L_1}}{2\pi}$.
\begin{proof}
	$|F|_{A^p} \ge \frac{|F|_{L_q}}{2\pi} \ge \frac{|F|_{L_1}}{2\pi}$, where the first inequality follows from the Hausdorff-Young inequality.
\end{proof}
\end{proposition}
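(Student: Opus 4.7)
The plan is to derive the bound by chaining two classical inequalities: the Hausdorff–Young inequality, which is already quoted in the Notations section, and Hölder's inequality on the finite measure space $\mathbb{T}$. This is the natural route because Hausdorff–Young is the only general-purpose tool that controls an $A^p$-norm from below by something integral, and the proposition is stated with the ambient $L^1$-norm on the right-hand side.

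First I would invoke Hausdorff–Young in the form $|F|_{A^p} \ge c_p |F|_{L^q}$ with $q$ dual to $p$. Since $p \in (1, (1+\sqrt 5)/2) \subset [1,2]$, this applies, and one can take $c_p = (2\pi)^{-1/q}$: indeed at $p=2$ Parseval gives $|F|_{A^2} = (2\pi)^{-1/2}|F|_{L^2}$, and at $p=1$ the triangle inequality gives $|F|_{A^1} \ge |F|_{L^\infty}$, so Riesz–Thorin interpolation produces the constant $(2\pi)^{-1/q}$ throughout. Second, since the circle has finite total measure $2\pi$ and $q \ge 2 \ge 1$, Hölder's inequality yields $|F|_{L^1} \le (2\pi)^{1-1/q} |F|_{L^q}$, i.e.\ $|F|_{L^q} \ge (2\pi)^{1/q - 1}|F|_{L^1}$.

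Multiplying these two estimates, the $(2\pi)^{-1/q}$ from Hausdorff–Young cancels the $(2\pi)^{1/q}$ arising from Hölder, and one is left with $|F|_{A^p} \ge (2\pi)^{-1}|F|_{L^1}$, exactly the bound claimed. There is no real obstacle here; the only point requiring a moment of care is that the constant from Hausdorff–Young and the constant lost in the Hölder embedding $L^q \hookrightarrow L^1$ over $\mathbb{T}$ of measure $2\pi$ are perfectly matched to give the clean factor $1/(2\pi)$, so one should either check the constants explicitly or just use the two endpoint cases ($p=1$ trivially, $p=2$ by Parseval plus Cauchy–Schwarz in the $L^2$-to-$L^1$ step) to see that the inequality is sharp at both endpoints and hence holds in between by interpolation.
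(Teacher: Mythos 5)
Your proof is correct and follows exactly the paper's route: Hausdorff--Young to pass from the $A^p$-norm to $L^q$, followed by the inclusion $L^q(\mathbb T)\hookrightarrow L^1(\mathbb T)$. You track the $2\pi$-factors more carefully than the paper's one-line chain, whose middle step $|F|_{L^q}/(2\pi)\ge|F|_{L^1}/(2\pi)$ is not literally valid for unnormalized norms (take $F\equiv 1$); your split $(2\pi)^{-1/q}\cdot(2\pi)^{1/q-1}=(2\pi)^{-1}$ is the clean version of the same bookkeeping and lands on the stated constant.
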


\begin{proposition}
For any $\beta > 0$, there exists a $D > 0$ such that if $A_0([0, 2\pi]) < \frac{1}{D}$, then for all $i$, $C_i^{\beta} > K_i, C_i^{\beta} > \frac{1}{\Delta_i}$. Also, $r_i < \frac{1}{10}$ for all $i$.
\end{proposition}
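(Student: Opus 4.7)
The plan is to unwind the recursions defining $K_i$, $C_i$, $r_i$, $\Delta_i$ into closed form and then verify each assertion by direct estimation, without any nontrivial analysis.

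The bound $r_i < 1/10$ is the easy part. Since $\Delta_j = 1/K_j$ for $j \ge 2$ and $K_j = (11/10)^{j-1} K_1$, the sum $r_i = r_1 + \sum_{j=1}^{i-1} \Delta_j$ is bounded above by $\tfrac{1}{1000} + 10^{-10} + \sum_{j=2}^\infty K_j^{-1}$. The tail is a geometric series with ratio $10/11$ and first term $(10/11)/K_1$, hence at most $10/K_1 \le 10^{-9}$. Thus $r_i < 1/100$ uniformly in $i$, and in fact uniformly in $\delta$.

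For the main inequality $C_i^\beta > K_i$, I would write $C_i = C_1 \prod_{j=1}^{i-1} K_j$ and restate the claim as $C_1^\beta > R_i$, where
\[ R_i := \frac{K_i}{\prod_{j=1}^{i-1} K_j^\beta}. \]
The key observation is that $R_{i+1}/R_i = (11/10) K_i^{-\beta}$, which falls below $1$ as soon as $K_i^\beta > 11/10$. Since $K_i \to \infty$, this happens for every $i$ beyond some finite threshold $i^\ast$ that depends only on $\beta$ and $\delta$ (through $K_1$); hence $R_i$ is eventually decreasing and $M(\beta, \delta) := \sup_i R_i$ is finite. Choosing $D$ so that $1/D < M(\beta, \delta)^{-1/\beta}$ forces $C_1 = 1/A_0([0, 2\pi]) > M(\beta, \delta)^{1/\beta}$, and the inequality $C_i^\beta > K_i$ follows for every $i \ge 1$. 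The companion condition $C_i^\beta > 1/\Delta_i$ coincides with this one for $i \ge 2$, while for $i = 1$ it reduces to $C_1^\beta > 10^{10}$, which is absorbed by enlarging $D$ if necessary.

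I do not anticipate any real obstacle: the argument is essentially bookkeeping of geometric sums and products, exploiting the fact that $K_i$ grows only geometrically while $C_i$ grows super-geometrically (the $i$-th factor being itself geometrically large). The only point deserving attention is verifying that $M(\beta, \delta)$ is finite, which is precisely what the eventual monotonicity of $R_i$ supplies.
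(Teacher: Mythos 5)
Your proof is correct, and since the paper states this proposition without an explicit proof (treating it as a routine verification), there is no paper argument to compare against; your closed-form unwinding of the recursions is the natural way to verify it. A few small remarks: the ratio computation $R_{i+1}/R_i = \tfrac{11}{10}K_i^{-\beta}$ and the observation that $K_i \to \infty$ geometrically are exactly what is needed, and the eventual monotonicity does give finiteness of $\sup_i R_i$. One minor presentational point: the chosen $D$ must also absorb the $i=1$ case $C_1^\beta > 1/\Delta_1 = 10^{10}$, which you note; it is cleaner to simply set $D = \max\bigl(M(\beta,\delta)^{1/\beta},\ 10^{10/\beta}\bigr)$ at the outset. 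Also note $\Delta_1 = 10^{-10} > 1/K_1$ (since $K_1 = 10^{10}(1+1/\delta) > 10^{10}$), so the $i=1$ case of $C_i^\beta > 1/\Delta_i$ is genuinely weaker than $C_1^\beta > K_1$, which means it would already be subsumed by the first inequality at $i=1$; your separate treatment is thus slightly more conservative than necessary but harmless.
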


We introduce the notation $F \succeq_K G$ for functions $F, G$, which means that $\exists$ fixed real numbers $C_1, C_2, C_3 \ge 0$ such that $C_1K^{C_2}\Delta^{-C_3}F \ge G$. Other notations with the subscript $K$, such as $F = O_K(G)$, have the same meaning.

Each phase of the algorithm represents a finite process of dyadic decomposition of a tree starting from some set of initial living intervals. It can have two different results - success and failure. In case of success, we will be able to choose some subset of intervals of the decomposition tree and apply Lemma 2 to it. In case of failure, we will choose some new set of intervals, which will become the initial living intervals for the next phase.

At each phase, we will place some subintervals of the leaves of the tree into three different sets, to be called respectively $A$-small set ($\mathcal{AS}_i$), $B$-small set ($\mathcal{BS}_i$) and final set ($\mathcal{FS}_i$). By $\mathcal{NL}_i$, we denote the set of intervals of the decomposition tree at the $i$-th phase that are not leaves. The previously defined notations for leaves, living intervals, father of an interval, etc., will be used as well.

We denote by $S_a(x)$ a periodic function equal to $\int_a^x B(x)$ for $x < a + 2\pi$. By $T_a(x)$ we denote a periodic function equal to $\int_a^x B(x)$ for $x < a + 2\pi$.

For a real $c > 0$, we call a point of the circle $y$ $c$-balanced if there exists an interval $I$ such that $c > |I| > \frac{c}{100}$ and for any point on the circle $v \notin I$, $y$ is balanced for $T_v$ on $I$ with $q_1 = q = \frac{p}{p-1}$.

Note that this definition is sufficient to check for any one such point $v$, because all other such $T_{v_1}$ differ from $T_v$ by some linear function.
	
	\begin{proposition}
		In any segment of the circle $I$, $|I| < 2\pi$, by Lemma 15, there exists a $|I|$-balanced point.
	\end{proposition}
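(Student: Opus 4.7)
The plan is to deduce the proposition directly from Lemma \ref{midpoint}, together with the affine-shift invariance highlighted in the note that immediately precedes the proposition's statement.

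Given $I$ with $|I| < 2\pi$, I would first choose an interval $J \subset I$ with $|I| > |J| > |I|/100$; the concrete choice $|J| = |I|/2$ suffices. Since $|J| < 2\pi$, the complement of $J$ on the circle is non-empty, so I can fix a reference point $v$ outside $J$. Because the integrand appearing in the definition of $T_v$ is continuous (it is the real part of a function whose Fourier series converges absolutely) and because the only point at which the periodic extension of $T_v$ can fail to be differentiable lies at $v \pmod{2\pi}$, the function $T_v$ is continuously differentiable on $J$. The hypotheses of Lemma \ref{midpoint} are therefore satisfied, and applying it to $T_v$ on $J$ with $q_1 = q$ yields a balanced point $y \in J$: for every $0 < x \le |J|$ there is a sub-interval $I(x) \subset J$ of length in $[x/100, x]$ containing $y$ on which $T_v'(y)$ is a $(q, I(x), T_v)$-balanced number.

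To upgrade this to the $|I|$-balanced property, I would invoke the invariance flagged before the proposition. For any other $v' \notin J$ the function $T_{v'} - T_v$ is affine on $J$; in particular $(T_{v'} - T_v)'$ equals some constant $c$ on $J$, so $T_{v'}'(y) = T_v'(y) + c$ and $G_{I(x), T_{v'}} = G_{I(x), T_v} + c$ pointwise on $\mathcal{U}(I(x))$. Substituting $t \mapsto t - c$ in the defining inequality of a $q$-almost midpoint then transfers the balanced property from $T_v$ to $T_{v'}$ verbatim; hence $y$ is balanced for $T_{v'}, J$ for every $v' \notin J$, and by definition $y$ is an $|I|$-balanced point.

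I do not foresee a genuine obstacle. The proposition is essentially a wrapper around Lemma \ref{midpoint} combined with the affine-shift invariance explicitly flagged by the authors. The only mildly delicate step is to confirm that $T_{v'} - T_v$ really is affine on $J$ under the specific periodic-extension convention for $T_v$; this is precisely what the strict inequality $|J| < |I|$ in the $c$-balanced definition buys, since it forces reference points $v, v' \notin J$ to exist and allows the various $T_v$ to be compared on $J$ away from their periodic jump or correction terms.
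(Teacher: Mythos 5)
Your proof is correct and spells out exactly what the paper leaves implicit: the proposition follows from Lemma~\ref{midpoint} applied to a single $T_v$ on a subinterval $J \subset I$ of length $|I|/2$, together with the affine-invariance remark preceding the proposition's statement, and the paper itself supplies no proof beyond citing that lemma. One small caveat worth noting: the paper's definitions of $S_a$ and $T_a$ are textually identical (a typo), but later usage indicates $T_a = \int_a^x S_a$ rather than $\int_a^x B$; under either reading $T_{v'} - T_v$ is affine on $J$, so your invariance step goes through unchanged.
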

	
At the beginning of the $i$-th phase, the set of living segments is either $\mathcal{AS}_{i-1}$ for $i > 1$, or the entire circle for $i = 1$. In the circle, we will choose the "beginning" as follows: we select the segment of the circle $\left[-\frac{\pi}{2}, \frac{\pi}{2}\right]$, and on it, by Proposition 20, we choose a $\pi$-balanced point $t$, which we will take as the "beginning". For convenience, we will consider this point to be 0, and also denote $S = S_0$, $T = T_0$.
	
	The algorithm proceeds in the same way as in the proof of Theorem 2, namely, we will take the longest segment $I \in \mathcal{AL}$, remove it from $\mathcal{AL}$, and either declare it a leaf according to some rules, or divide it into 2 segments and place them in $\mathcal{AL}$.
	
	We will now describe how a segment is processed at the $i$-th phase. The index $i$ will now be omitted.
	
	\bfseries Algorithm 2. \mdseries 
	
	Recall that $A_0(I)$ is the $A_0$-measure, $DA(I, t)$ is the norm of discrepancy, and $\mathcal{F}(I)$ is the father of the segment in the tree. If a segment fits several points of the algorithm, then the point with the smallest number is applied.
	\begin{enumerate}
		
		\item If $\int_I B > 2KCA_0(I)$, then declare $I$ a leaf and place $I$ in $\mathcal{AS}$.
		
		\item If $K \int_I B < CA_0(I)$, then declare $I$ a leaf and place in $\mathcal{BS}$.
		
		\item If $DA(I, \frac{1}{10000K^{10}}) \ge \frac{1}{10000K^5}$, then declare $I$ a leaf and place in $\mathcal{FS}$.
		
		\item If $|\mathbb{E}_I A| \ge r + \frac{\Delta}{2}$, then declare $I$ a leaf and place in $\mathcal{FS}$.
		
		\item Call a segment dense if its $A_0$-measure is at least $\frac{|I|}{100}$. If the segment $I$ is dense and there exists a subsegment $I'$ of length $\frac{|I|}{K^2}$ such that $\int{I'} B \ge \left(1 - \frac{3}{K}\right)\int_I B$, and its ends are $|I|'$ -- balanced, then declare $I$ a leaf and place $I'$ in $\mathcal{AS}$.
		
		\item If none of the previous points are satisfied, take a $\frac{|I|}{3}$-balanced point $t$ for the middle third of the segment, which exists by Lemma 15, divide the segment with it into 2 subsegments, and place them in $\mathcal{AL}$.
		
	\end{enumerate}
	
	Declare the decomposition phase successful if $\sum_{I \in \mathcal{FS}} A_0(I) \ge \frac{1}{K^{3}C}$, and end the entire decomposition algorithm in this case. Otherwise, declare it unsuccessful, take $\mathcal{AL}$ as the set of living segments for the next phase, and move on to the next phase.
	
	
	\begin{proposition}\label{BeqA}
		$\forall I \in \mathcal{FS} \cup \mathcal{NL}$, $\frac{CA_0(I)}{K} \le \int_I B \le 2KCA_0(I)$.
		\begin{proof}
			If these inequalities are not satisfied, then the segment $I$ will be processed by points 1 and 2 and placed in other sets of leaves.
		\end{proof}
	\end{proposition}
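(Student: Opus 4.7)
The plan is to derive the two inequalities directly from the stopping rules 1 and 2 of Algorithm 2, by contrapositive. The key observation is the tie-breaking convention stated in the algorithm: \emph{if a segment fits several points, the point with the smallest number is applied}. Combined with the fact that steps 1 and 2 place the interval into $\mathcal{AS}$ and $\mathcal{BS}$ respectively (and in neither case into $\mathcal{FS}\cup\mathcal{NL}$), this reduces the proposition to a one-line verification.

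I would proceed as follows. Let $I \in \mathcal{FS}\cup\mathcal{NL}$. If $I\in\mathcal{FS}$, then $I$ was declared a leaf at step 3 or step 4; by the smallest-index rule, neither the condition of step 1 nor that of step 2 can have been active at $I$, since either would have triggered first and sent $I$ into $\mathcal{AS}$ or $\mathcal{BS}$ instead. If $I\in\mathcal{NL}$, then $I$ was not declared a leaf at all, so in particular none of steps 1--5 was triggered at $I$; again the conditions of steps 1 and 2 fail. In both cases, negating the inequality at step 1 gives $\int_I B \le 2KC A_0(I)$, and negating the inequality at step 2 gives $K\int_I B \ge C A_0(I)$, that is, $\int_I B \ge C A_0(I)/K$. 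Together these yield the claim.

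There is essentially no obstacle here: the proposition is a bookkeeping statement extracted from the definition of the algorithm and is included only to be cited in later phases of the argument (where one needs to compare the mass of $B$ and the $A_0$-measure on surviving intervals). The only point that requires a sentence of attention is the tie-breaking rule, which is what guarantees that an interval placed in $\mathcal{FS}$ by step 3 or 4 could not simultaneously satisfy the hypotheses of steps 1 or 2.
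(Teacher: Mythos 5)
Your proof is correct and is essentially the same as the paper's one-line argument: an interval in $\mathcal{FS}\cup\mathcal{NL}$ was not processed by steps 1 or 2, so the negations of their conditions hold. You spell out the role of the tie-breaking rule explicitly, which is a fair clarification, but the reasoning is identical.
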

	
	\begin{proposition}
		At any given time during the algorithm, $\forall I \in \mathcal{AL}_i$, both ends of $I$ are $\frac{|I|}{10}$-balanced points.
		\begin{proof}
			If these inequalities are not satisfied, then the segment $I$ will be processed by points 1 and 2, and in the case of point 1, we will place the father of $I$ in $\mathcal{FS}$, which satisfies these inequalities.
		\end{proof}
	\end{proposition}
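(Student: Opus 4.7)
The plan is to argue by induction on the evolution of the algorithm, tracking a provenance for each endpoint of every living segment. The key preliminary observation, implicit in the proof of Lemma \ref{midpoint}, is that the balanced point $t_0 = \bigcap_k J_k$ constructed there is balanced for $T$ on \emph{every} interval $J_k$ of the nested sequence, not only on $J_1$. Since $|J_{k+1}|/|J_k|=0.48$ while the definition of $c$-balanced admits a witness interval of any length in $(c/100,\,c)$, for every $c\le|J_1|$ one may select $k$ with $|J_k|\in(c/100,\,c)$. Thus any point produced by Lemma \ref{midpoint} from an initial interval of length $L$ is automatically $c$-balanced for every $c\le L$.

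With this strengthening in hand I would carry the following invariant along the algorithm: for every endpoint $e$ of every living segment $I\in\mathcal{AL}_i$ at any moment, $e$ comes with a provenance interval $J(e)$, namely the initial $J_1$ of the Lemma \ref{midpoint} iteration from which $e$ arose, satisfying $|J(e)|\ge|I|/10$. The conclusion of the proposition is then immediate from the strengthened form of Lemma \ref{midpoint} quoted above.

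I would then verify the invariant at each of the three ways a new segment enters $\mathcal{AL}_i$. First, the initial segment of phase $1$ is the whole circle cut at the ``beginning'' $t_0$, which is the Lemma \ref{midpoint} point on $[-\pi/2,\pi/2]$ and thus has $|J(t_0)|=\pi\ge|I|/10=\pi/5$; the initial segments of phase $i>1$ arising from step $1$ of the previous phase are unchanged whole leaves, so the invariant transfers verbatim. Second, for a child $I$ produced by step $6$ on a segment $I_c$, the new endpoint $t$ is the Lemma \ref{midpoint} point on the middle third of $I_c$, so $|J(t)|=|I_c|/3\ge|I|/10$ because $|I|\le 2|I_c|/3$, while the other endpoint is inherited from $I_c$ with provenance already $\ge|I_c|/10\ge|I|/10$ by the inductive hypothesis.

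The remaining case, which I expect to be the main obstacle, is segments $I'$ entering $\mathcal{AL}_i$ from $\mathcal{AS}_{i-1}$ via step $5$ of the preceding phase. Step $5$ only demands that the endpoints of $I'$ be $|I'|$-balanced, which \emph{a priori} supplies a single witness interval of length somewhere in $(|I'|/100,|I'|)$ and thus does not directly yield provenance at all smaller scales. I would resolve this by reading step $5$ constructively: among the candidate subsegments of length $|I|/K^2$ satisfying the $B$-mass requirement, one restricts attention to those whose two endpoints are Lemma \ref{midpoint} balanced points with initial interval of length exactly $|I'|$, which are available by the existence of $|J|$-balanced points on any interval $J$ of length $|J|=|I'|$. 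Each such endpoint then carries provenance $|J(e)|=|I'|\ge|I'|/10$, preserving the invariant into the next phase.
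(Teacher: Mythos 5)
Your central observation — that the point $t_0 = \cap_k J_k$ produced by the iteration of Lemma \ref{midpoint} is balanced for $T$ not just on $J_1$ but on every $J_k$, and is therefore $c$-balanced for all $c\le|J_1|$ — is correct, necessary, and genuinely nontrivial: it does \emph{not} follow from the bare definitions that ``$c$-balanced'' implies ``$c'$-balanced'' for $c'<c$, since the witness interval in the definition of $c$-balanced only gives a $(q_1,I(x),T)$-balanced value of $T'(y)$ at smaller scales, not the full nested-scale property. So tracking the ``provenance'' $|J(e)|$ of each endpoint is the right invariant, and your verification at the initial cut and at step~6 is fine.

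The gap is exactly where you flagged it, but your proposed repair does not match the rest of the paper. You restrict step~5 to subsegments $I'$ ``whose two endpoints are Lemma \ref{midpoint} balanced points with initial interval of length exactly $|I'|$.'' However, in the proof of Lemma \ref{BSai} (part~2), the constructed witness subsegment is $[p,b]$, where $p$ is a freshly produced Lemma \ref{midpoint} point on an interval of length in $[\tfrac{|I|}{100K^2},\tfrac{|I|}{K^2}]$, while $b$ is an \emph{inherited} endpoint of $I$ whose provenance interval has length $\ge|I|/10 \gg |I'|$; thus $b$ is not of the form you require, and it is not shown that a subsegment with \emph{both} endpoints fresh at scale $|I'|$ exists. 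Your constructive reading of step~5 therefore may not be satisfiable precisely in the situations Lemma \ref{BSai} needs it to be. The fix should instead require only $|J(e)|\succeq |I'|$ for each endpoint $e$ of $I'$; this is what $[p,b]$ actually satisfies, but then one has to notice that the provenance of $p$ may be as small as $|I'|/100$, so the invariant $|J(e)|\ge|I|/10$ is too strong and must be weakened (to something like $|J(e)|>|I|/1000$; this still yields $|I|/10$-balancedness because a Lemma \ref{midpoint} point with provenance $L$ is $c$-balanced for all $c<100L$, the witnesses $J_k$ being geometrically spaced). As for the paper's own one-line justification, it reads like a misprinted copy of the preceding proposition's proof (it speaks of ``inequalities'' and points~1--2 of the algorithm, neither of which bears on balancedness), so it offers no guidance here; your inductive-provenance approach is the substantive one, it just needs the step-5 bookkeeping and constants corrected.
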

	
 The given properties are checked directly:
\begin{proposition}
	Throughout the entire algorithm, the following invariants will be satisfied:
	\begin{enumerate}
		\item At the beginning of the $i$-th phase, $\forall I \in \mathcal{AL}$, $\int_I B \ge C_i A_0(I)$.
		\item At the beginning of the $i$-th phase, $\int_{\mathcal{AL}_i} B \ge 2\pi - \delta$.
		\item At any moment in time, for $\forall I \in \mathcal{AL}_i$, $DA(I, \frac{1}{100K_i^2}) \le \frac{1}{100K_i^3}$.
		\item At the beginning of the $i$-th phase, for $\forall I \in \mathcal{AL}_i$, $|\mathbb{E}_I A| < r_i$.
	\end{enumerate}
\end{proposition}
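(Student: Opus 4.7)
The plan is to establish all four invariants simultaneously by induction on the phase number $i$, verifying at each stage that the placement and splitting rules of Algorithm 2 preserve them given the parameter updates. The base case $i=1$ is immediate from the hypotheses of Theorem \ref{main}: the single living segment is the whole circle, and (1) reads $2\pi \ge C_1 A_0([0,2\pi]) = 1$ using $\mathbb{E}\tilde B = 1$; (2) reads $2\pi \ge 2\pi - \delta$; (4) reads $|\mathbb{E} A| \le C^{-1} \ll r_1$ using assumption (2) of Theorem \ref{main}; and (3) holds trivially before any split has occurred. For the inductive step I assume all four invariants hold through phase $i$ and deduce them for phase $i+1$, whose alive set is $\mathcal{AL}_{i+1} = \mathcal{AS}_i$, so segments enter either through point 1 (the whole $I$) or through point 5 (a subsegment $I' \subset I$ of length $|I|/K_i^2$).

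Invariants (2) and (4) are the most routine. For (2), the $B$-mass leaving $\mathcal{AL}_i$ is distributed to $\mathcal{BS}_i$, where $\int_I B \le C_i A_0(I)/K_i$ keeps the total negligible via Proposition \ref{A0_prop}(2), and to $\mathcal{FS}_i$, whose total $A_0$-measure is less than $1/(K_i^3 C_i)$ because the phase was declared unsuccessful; converting via Proposition \ref{BeqA} and telescoping over $i$ fits within the $\delta$-budget. For (4), at a point-6 split the difference $|\mathbb{E}_{J_k} A - \mathbb{E}_I A|$ is controlled by $\mathcal{DA}(I, 1/3)$, which by invariant (3) and the quasi-monotonicity proposition $\mathcal{DA}(I, 2t) \ge \mathcal{DA}(I, t)/5$ is $\ll \Delta_i/2$; combined with the fact that point 4 did not fire, i.e.\ $|\mathbb{E}_I A| < r_i + \Delta_i/2$, the child satisfies $|\mathbb{E}_{J_k}A| < r_i + \Delta_i = r_{i+1}$, exactly the update.

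Invariant (3), required at every moment within a phase, is preserved at point-6 splits because the split point is chosen $|I|/3$-balanced in the middle third, so the children $J_k$ have $|J_k| \in [|I|/3, 2|I|/3]$; a subinterval of $J_k$ of length $\ge |J_k|/(100 K_{i+1}^2)$ is a subinterval of $I$ of length $\gtrsim |I|/(300 K_{i+1}^2)$, and the quasi-monotonicity estimate transports the discrepancy bound from $I$ to $J_k$ with constant-factor loss, which the margin $K_{i+1}/K_i = 11/10$ comfortably absorbs. For invariant (1), the point-1 case is immediate: $\int_I B > 2 K_i C_i A_0(I) = 2 C_{i+1} A_0(I)$. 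For the point-5 case, combining $\int_{I'} B \ge (1 - 3/K_i)\int_I B$ with the lower bound $\int_I B \ge C_i A_0(I)/K_i$ of Proposition \ref{BeqA} and the density hypothesis $A_0(I) \ge |I|/100$ yields $\int_{I'} B \succeq (K_i - 3) C_i |I'|/100$, which needs to dominate $C_{i+1} A_0(I') = K_i C_i A_0(I')$.

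The main obstacle is precisely this last point: the naive estimate $A_0(I') \le |I'|$ falls short by a factor of roughly $K_i$, so the argument must produce a nontrivial upper bound on $A_0(I')$. The rescue uses invariants (3) and (4) together with the $|I'|$-balanced endpoint requirement built into point 5: the discrepancy bound forces $A$ to be essentially flat, at the scale $|I|/(100 K_i^2) \le |I'|$, on any subinterval of $I$, while invariant (4) pins the near-constant value close to $\mathbb{E}_I A$ in a range far smaller than $1$; feeding this into the supremum defining $A_0(I')$, together with Proposition \ref{A0_prop}(3), yields $A_0(I') \ll |I'|/K_i$, which is exactly the suppression needed to close the chain and conclude $\int_{I'} B \ge C_{i+1} A_0(I')$.
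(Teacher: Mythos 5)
The paper itself only says ``Exercise for the reader,'' so there is no in-text proof to compare against, but your attempt can still be assessed on its own terms. Your organization (induction over phases, examining which stopping rule produced each new alive segment) is right, and your handling of invariants (3) and (4) is essentially correct --- in particular you correctly identify that rule~4 acts as a reset, so that every segment reaching rule~6 has $|\mathbb{E}_I A| < r_i + \Delta_i/2$ regardless of depth in the tree, which prevents the drift from accumulating over the many splits within a phase. Your sketch for invariant~(2) is also reasonable in outline, though it leans on invariant~(1) (to convert $B$-mass bounds for $\mathcal{BS}_i$ and the step-5 discards into something summable in $i$), so its validity is contingent on that one.

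The gap you yourself flag in the step-5 case of invariant~(1) is real, and your proposed rescue does not close it. You argue that invariants (3) and (4), together with the balancedness of the endpoints of $I'$, force $A_0(I') \ll |I'|/K_i$. But $A_0(I')$ is a supremum over \emph{all} finite disjoint collections of subintervals of $I'$, with no lower bound on their lengths, whereas $\mathcal{DA}\bigl(I, \tfrac{1}{100K_i^2}\bigr)$ constrains $\mathbb{E}_J A$ only for $|J| \ge |I|/(100K_i^2) = |I'|$. If, say, $A$ equals $\pm\tfrac12$ alternating with period much smaller than $|I'|$ on $I'$, then $\mathbb{E}_J A \approx 0$ for every $J$ at the controlled scale, so invariants (3) and (4) are comfortably satisfied and the endpoints can be balanced; nevertheless $A_0(I') \ge \int_{I'} \min(|A|,1) = |I'|/2$ by Proposition~\ref{A0_prop}(3). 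So no bound better than the trivial $A_0(I') \le |I'|$ follows from what you invoke, and your chain $\int_{I'}B \succeq (K_i-3)C_i|I'|/100$ versus $C_{i+1}A_0(I') = K_iC_iA_0(I')$ remains short by roughly the factor $100$ coming from the density threshold $A_0(I)\ge|I|/100$. Closing this requires a genuinely different input --- for instance, propagating the stronger lower bound $\int_I B \ge C_i A_0(I)$ (rather than the weaker $\ge C_i A_0(I)/K_i$ that rule~2 alone gives) down to the segment on which rule~5 fires, or a tighter density threshold in rule~5 --- not a pointwise upper bound on $A_0(I')$, which the stated invariants do not and cannot provide.
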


\begin{proof}
	Exercise for the reader.
\end{proof}

\begin{proposition}
The number of phases of the algorithm is finite.
\begin{proof}
	Suppose the contrary and denote by $\mathcal{T}_k$ the set of alive segments at the beginning of the $k$-th phase. Then $\mathcal{T}_k$ is a sequence of nested sets, denote their intersection by $\mathcal{T}$. $|\mathcal{T}| > 0$, since $\forall k$, $\int_{\mathcal{T}_k} B > 2\pi - \delta$ by invariant 2 of the previous proposition. On the other hand, $\sum_{I \in \mathcal{T}_k}A_0(I) \preceq \frac{1}{C_i}\int_{T_k}B \preceq C_i \int |B| \to 0$, hence by Proposition \ref{A0_prop}, $\sum_{I \in \mathcal{T}k} \int_I \min(|A|, 1) \to 0$, which means $A(x) = 0$ almost everywhere on $T$, and $\int |B(1-A)| \ge \int_{\mathcal{T}} |B(1-A)| = 2\pi - \delta$, that is, $\left| B(1-A)\right|_{A^p} \ge 1 - \frac{\delta}{2\pi}$, which contradicts condition 4 of the theorem.
\end{proof}
\end{proposition}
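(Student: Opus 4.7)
The plan is to proceed by contradiction: assume the algorithm runs for infinitely many phases and derive a contradiction from condition 4 of Theorem \ref{main}. The guiding idea is that the nested sequence of alive-segment sets shrinks to a limit set $\mathcal{T}$ on which $A$ must vanish (because $C_k\to\infty$), while $\mathcal{T}$ still carries essentially all of the mass of $B$ (by invariant (2) of the preceding proposition); together these force $|(1-\widetilde{A})\widetilde{B}|_{L^1}$ to be too large.

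First I would set up the limiting object. Let $\mathcal{T}_k\subset\mathbb{T}$ be the union of the alive segments at the start of phase $k$. Since $\mathcal{AL}_{k+1}=\mathcal{AS}_k$ and every element of $\mathcal{AS}_k$ is (by Algorithm 2, steps 1 and 5) a subinterval of a leaf of the phase-$k$ tree, hence of a segment alive during phase $k$, the sequence is nested: $\mathcal{T}_1\supset\mathcal{T}_2\supset\cdots$. Put $\mathcal{T}=\bigcap_k\mathcal{T}_k$. Invariant (2) gives $\int_{\mathcal{T}_k}B\ge 2\pi-\delta$ for every $k$, and since $B=\Re\widetilde{B}\in L^\infty$ (because $\widetilde{B}\in A_1$ has an absolutely convergent Fourier series) is integrable, dominated convergence applied to $\chi_{\mathcal{T}_k}B$ yields $\int_{\mathcal{T}}B\ge 2\pi-\delta$; in particular $|\mathcal{T}|>0$.

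Next I would show $A=0$ almost everywhere on $\mathcal{T}$. Invariant (1) gives $A_0(I)\le C_k^{-1}\int_I B$ for each $I\in\mathcal{AL}_k$, and summing over alive segments yields $\sum_{I\in\mathcal{T}_k}A_0(I)\le C_k^{-1}|B|_{L^1}$. Since the recursion $C_{k+1}=K_k C_k$ with every $K_k>1$ forces $C_k\to\infty$, this bound tends to zero. Proposition \ref{A0_prop}(3) then produces
\[ \int_{\mathcal{T}_k}\min(|A|,1)\;\le\;\sum_{I\in\mathcal{T}_k}A_0(I)\;\longrightarrow\;0, \]
so $\int_{\mathcal{T}}\min(|A|,1)=0$, and hence $A=0$ a.e.\ on $\mathcal{T}$.

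Finally I would close with the contradiction. On $\mathcal{T}$, $\Re\widetilde{A}=A=0$ gives $|1-\widetilde{A}|\ge 1$, while $|\widetilde{B}|\ge|\Re\widetilde{B}|=|B|\ge B$; therefore
\[ \int_0^{2\pi}|(1-\widetilde{A})\widetilde{B}|\;\ge\;\int_{\mathcal{T}}|(1-\widetilde{A})\widetilde{B}|\;\ge\;\int_{\mathcal{T}}B\;\ge\;2\pi-\delta. \]
The bound $|F|_{A^p}\ge|F|_{L^1}/(2\pi)$ (from the proposition earlier in this section) then yields $|(1-\widetilde{A})\widetilde{B}|_{A^p}\ge 1-\delta/(2\pi)>1-\delta$, contradicting condition 4 of Theorem \ref{main}. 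The only mildly delicate steps are verifying the nestedness of the $\mathcal{T}_k$ from the description of Algorithm 2 and the passage $\int_{\mathcal{T}_k}B\to\int_{\mathcal{T}}B$; neither presents any real difficulty.
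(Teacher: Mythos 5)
Your proposal is essentially identical to the paper's own proof: both argue by contradiction using the nested sets of alive segments, invariant (2) to keep $\int_{\mathcal{T}} B \ge 2\pi - \delta$, invariant (1) together with $C_k \to \infty$ and Proposition~\ref{A0_prop} to force $A = 0$ a.e.\ on $\mathcal{T}$, and finally the $L^1$-vs-$A^p$ comparison to contradict condition (4). The only difference is that you spell out the passage from $A = \Re\widetilde A$, $B = \Re\widetilde B$ back to $\widetilde A, \widetilde B$ and the dominated-convergence step, both of which the paper leaves implicit.
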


From now on we assume that the current phase is successful, since we have shown that a successful phase will necessarily occur.

Denote by $T$ the set of segments of the dyadic partition tree in this phase. Also denote
 $\mathcal{BAL}(I) = |I| DA(I, \frac{1}{10^{100}K^{100}})^2$.

\begin{lemma}\label{SumBal}
$\sum_{I \in \mathcal{NL}, A_0(I) \ge \frac{|I|}{10^{1000} K^{1000}}} \mathcal{BAL}(I) \succeq_K 1$
\begin{proof}
	Leaves $I \in \mathcal{FS}$ are divided into two types - such that $\mathbb{E}I \ge r + \frac{\Delta}{2}$ (denote the set of such by $\mathcal{FSB}$), and the remaining ones. $\sum_{I \in \mathcal{FS}} A_0(I) \ge \frac{1}{K^3C}$, hence either $\sum_{I \in \mathcal{FS \setminus FSB}} A_0(I) \ge \frac{1}{2K^3C}$, or $\sum_{I \in \mathcal{FSB}} A_0(I) \ge \frac{1}{2K^3C}$. In the first case, note that $$DA(\mathcal{F}(I), \frac{1}{10^{100}K^{100}}) \ge \frac{1}{10^5K^5}$$ for  $\forall I \in \mathcal{FS \setminus FSB}$, whence $A_0(\mathcal{F}(I)) \ge \frac{|I|}{10^{1000} K^{1000}}$ and $$|\mathcal{F}(I)| DA^2(\mathcal{F}(I), \frac{1}{10^{100}K^{100}}) \succeq_K |I|$$ by \ref{A0DA}, whence $\sum_{I \in \mathcal{FS \setminus FSB}} \mathcal{BAL}(\mathcal{F}(I)) \succeq_K 1$, but in this sum each segment occurs at most 2 times, whence $\sum_{I \in \mathcal{NL}, A_0(I) \ge \frac{|I|}{10^{1000} K^{1000}}} \mathcal{BAL}(I) \succeq_K 1$.
	
	In the case if $\sum_{I \in \mathcal{FSB}} A_0(I) \ge \frac{1}{2K^3C}$, the proof repeats the proof of Proposition \ref{stellar}.
\end{proof}
\end{lemma}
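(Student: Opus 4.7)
Following the dichotomy sketched in the excerpt, I will split the hypothesis $\sum_{I \in \mathcal{FS}} A_0(I) \ge 1/(K^3 C)$ by pigeonhole into two subcases according to whether the bulk of the $A_0$-weight sits in $\mathcal{FS} \setminus \mathcal{FSB}$ or in $\mathcal{FSB}$, and handle each by a separate argument.

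In the discrepancy case, $\sum_{\mathcal{FS} \setminus \mathcal{FSB}} A_0(I) \ge 1/(2K^3 C)$, and each such $I$ became a leaf via point~3, so there is a subinterval $I'' \subset I$ of relative size at least $1/(10000 K^{10})$ realising $|\mathbb{E}_{I''} A - \mathbb{E}_I A| \ge 1/(10000K^5)$. The middle-third rule of point~6 gives $|I|/|\mathcal{F}(I)| \ge 1/3$, so both $I$ and $I''$ are admissible test intervals for $DA(\mathcal{F}(I), 1/(10^{100}K^{100}))$. Invariant~3 applied to $\mathcal{F}(I) \in \mathcal{NL}$ pins $|\mathbb{E}_I A - \mathbb{E}_{\mathcal{F}(I)}A| \le 1/(100K^3)$, and a triangle-inequality dichotomy on whether $|\mathbb{E}_{I''}A - \mathbb{E}_{\mathcal{F}(I)}A|$ is above or below $1/(20000K^5)$ shows that one of $I$, $I''$ witnesses $DA(\mathcal{F}(I), 1/(10^{100}K^{100})) \succeq_K 1$. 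Hence $\mathcal{BAL}(\mathcal{F}(I)) \succeq_K |\mathcal{F}(I)| \succeq |I|$; the $A_0$-density side condition on $\mathcal{F}(I)$ follows from Proposition \ref{A0DA} after descending from $DA$ at scale $1/(10^{100}K^{100})$ to $DA(\cdot, 1/10)$ via the $DA$-monotonicity proposition, losing only a fixed power of $K$. Summing over $I$, with each parent counted at most twice, closes the case.

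In the large-average case, $\sum_{\mathcal{FSB}} A_0(I) \ge 1/(2K^3 C)$, I mimic the stellar argument of Corollary \ref{stellar} with thresholds shifted by $r$. Call $I$ \emph{stellar} if $\mathbb{E}_I A \ge r + \Delta/4$, or if $\mathbb{E}_I A \ge r + \Delta/8$ and $\mathcal{F}(I)$ is stellar; call a stellar $I$ \emph{basic} if $\mathcal{F}(I)$ is not stellar. Every FSB leaf is stellar. For each basic $I$ the maximal subtree $S_I$ (defined exactly as in Corollary \ref{stellar}) has leaves of three kinds: FSB leaves with $\mathbb{E}_L A \ge r + \Delta/2$, non-stellar sub-leaves with $\mathbb{E}_L A < r + \Delta/8$, and other actual leaves (in AS, BS or FS non-FSB) satisfying only $|\mathbb{E}_L A| < r + \Delta/2$. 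Splitting $\int_I A \ge (r + \Delta/4)|I|$ according to this partition, together with the gap $3\Delta/8$ between $r + \Delta/2$ and $r + \Delta/8$ and the two-sided control $CA_0(L)/K \le \int_L B \le 2KC A_0(L)$ of Proposition \ref{BeqA} on the AS/BS/FS-non-FSB leaves, pins the total mass of FSB leaves inside $S_I$ to be at least $\succeq_K |I|$. Applying the telescoping potential identity (the unlabeled lemma preceding Corollary \ref{stellar}) to $S_I$ then yields $\sum_{J \in S_I \setminus \mathcal{L}(S_I)} |J| D_J^2 \succeq_K |I|$, and summing over basic $I$, using that each NL segment belongs to a unique $S_I$, mimics the final inequality of Corollary \ref{stellar}.

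The main obstacle is the large-average case, specifically the narrow stellar thresholds. In Corollary \ref{stellar} the ratios between $1/10$, $1/100$ and $1/1000$ are factors of $10$, which kept all constants harmless; in our setting every relevant gap is of order $\Delta$, so the analogous constants appear as fixed powers of $1/\Delta$ and must be absorbed by $\succeq_K$. A second, more delicate point is the analysis of AS, BS and FS-non-FSB leaves inside $S_I$, whose $A$-averages carry no directly useful one-sided bound; their aggregate contribution to $\int_I A$ must be estimated indirectly via Proposition \ref{BeqA} combined with $\int B = 2\pi$, with careful bookkeeping of the $K$, $\Delta$ and $C$ dependence. The density constraint $A_0(I) \ge |I|/(10^{1000}K^{1000})$ from the conclusion of the lemma is mild and follows in both cases from Proposition \ref{A0DA}.
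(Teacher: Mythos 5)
Your discrepancy case ($\mathcal{FS}\setminus\mathcal{FSB}$) is handled correctly and in the same spirit as the paper: the triangle-inequality dichotomy between $I$ and the witness $I''$ is exactly the right way to push the discrepancy up to $\mathcal{F}(I)$, and the bookkeeping of test-interval sizes and the use of Proposition~\ref{A0DA} for the density side condition are all sound. (One small remark: the sum you end up with is $\succeq_K \sum A_0(I) \ge \frac{1}{2K^3C}$, i.e.\ $\succeq_K C^{-1}$ rather than $\succeq_K 1$; the paper's subsequent pigeonhole in fact expects $\frac{1}{CM_1^\alpha}$, so the constant $1$ in the lemma statement is apparently a misprint for $C^{-1}$, and your version is consistent with the actual usage.)

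The $\mathcal{FSB}$ case has a genuine gap, and it is precisely the one you flag at the end. In Proposition~9 of the easy theorem the stellar argument works because the three classes of sub-leaves of $S_I$ have \emph{separated} averages: big leaves have $\mathbb{E}_J A \ge 1/10$, small leaves have $\mathbb{E}_J A \le C^{-2}$, non-stellar sub-leaves have $\mathbb{E}_J A < 1/1000$, and $A\ge 0$ makes all contributions one-sided. With your thresholds $r+\Delta/4$ and $r+\Delta/8$, however, the FSB leaves, the non-stellar sub-leaves, and the remaining actual leaves (AS, BS, FS non-FSB) all have $|\mathbb{E}_L A|$ bounded above by essentially $r+\Delta/2$ — the same quantity that bounds the FSB leaves from below. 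Splitting $\int_I A \ge (r+\Delta/4)|I|$ then only yields $m_{\mathrm{NS}} \lesssim \frac{2}{3}|I|$; it does not pin $m_{\mathrm{FSB}} \succeq_K |I|$, since the AS/BS/FS-non-FSB mass can absorb the whole integral. Your proposed repair via Proposition~\ref{BeqA} cannot do the job as stated, because that proposition applies only to segments in $\mathcal{FS}\cup\mathcal{NL}$ — by construction the two-sided bound $\frac{CA_0}{K}\le\int B\le 2KCA_0$ \emph{fails} on $\mathcal{AS}$ and $\mathcal{BS}$ leaves (those are exactly the leaves created when it fails in one direction or the other), so it gives no control over the very leaves that cause the trouble. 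The paper does not spell out how it handles this, but as written your sketch does not close it.
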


To complete the proof of the theorem, we need an analogue of Lemma \ref{Feuer}.


\section{Skewed Segments}

A \textbf{skewed segment} (SST) $\eta = \eta(a,b,c,d)$ for real $a < b < c < d$ will be called a function on the axis (or circle identified with $ [-\pi, \pi ] $) equal to $0$ outside the segment $[a,d]$, $1$ on the segment $[b,c]$, and linear on the segments $[a,b]$ and $[c,d]$, see the picture.

\bigskip
\medskip

\begin{tikzpicture}[scale=1.2]\label{trpezoid}
\draw[xshift=-2.6cm,yshift=.5cm]
node[right,text width=6cm]
{$\eta ( a,b,c,d) (t) = $};
\draw[->] (0.5,0) -- (9,0) node[anchor=north west]{\small $t$};
\draw[thick] (0.5,0) -- (1,0);
\draw (1,2pt) -- ( 1,-2pt) node[anchor=north west,pos=1pt]{$\!\!\!a$};
\draw[thick] (1,0) -- (2,1);
\draw (2,1pt) -- ( 2,-1pt)
node[anchor=north west,pos=0.4pt]{$\!\!\!b$};
\draw[dashed] (2,0) -- (2,1); 
\draw[thick] (2,1) -- ( 5,1);
\draw[dashed] (5,0) -- (5,1); 
\draw (5,1pt) -- ( 5,-1pt)
node[anchor=north west,pos=1.5pt]{$\!\!\!c$};
\draw[thick] (5,1) -- (8,0);
\draw (8,1pt) -- (8,-1pt) node[anchor=north west,pos=0.4pt]{$\!\!\!d$};
\draw[dashed] (5,1) -- (8,1) node[anchor=west]{\small $1$};
\draw[thick] (8,0) -- (9,0);
\end{tikzpicture} 

\medskip

A skewed segment is called \textbf{symmetric} (SSST) if $b-a = d-c$, in this case the point $\frac{b+c}{2}$ is called the center of the segment. An SST is called \textbf{proportional} if $\frac{|b-a|}{|d-a|}, \frac{|d-c|}{|d-a|} > \frac{1}{1000}$. The integral of a function $f$ over a skewed segment will be denoted by $\int_{\eta} f = \int_a^d f \eta dx$, the average of $f$ over a skewed segment is $\mathbb{E}(a,b,c,d)f = \frac{\int_a^d f \eta}{\int_a^d \eta}$. Also, for a function $f$ on a circle, denote by $\mathcal{SK}_{a,b}(f)$ the function equal to $\mathbb{E}(x-a,x-b, x+b, x+a)f$ at point $x$, where $\eta(a,b,c,d)$ is understood as a function on the circle. By definition, $\mathcal{SK}_{a,b}(F) = \frac{2}{d+c-b-a} F \ast \eta(-a,-b,b,a)$. The segment $[b,c]$ will be called the middle of the SST.

Let $\mathcal{D}(a,b,c,d) ( t ) = \frac{ d\eta (a,b,c,d)(t)}{dt} $, $ t \ne a, b, c, d $. 


Through $\mathcal{SR}_\beta$ denote the set of such SSST $\eta(a,b,c,d)$ that $\frac{b-a}{d-c} = \beta$.

The set of all symmetric segments with center at $t$ will be denoted by $\mathcal{SYMS}_t$. For real $x,y$, through $\mathcal{PRS}_{(x,y,z)}$ denote the set of SST $\eta(a,b,c,d)$ such that $x \le b-a \le z, x \le d-c \le z, y \ge d-a$. 

Let us denote $$FB_{a,b,c}(z) = \frac{1}{a^2} \int_{ a < d^\prime - c^\prime < c; 0 < 2d^\prime < b } |\mathbb{E}(z-d^\prime ,z-c^\prime ,z+c^\prime ,z+d^\prime )B|^q dc^\prime dd^\prime .$$

For a segment $I$, define $$\mathcal{SSI}(I) = \int_I FB_{\frac{|I|}{2^{400}}, 2^{400}|I|,2^{400}|I|}(z) dz$$ and $$\mathcal{SAI}(I) = \int_I FB_{\frac{|I|}{2^{600}}, 2^{600}|I|, 2^{600}|I|}(z)|1-A(z)|^q dz.$$

Also for a segment $I$, define the \textbf{external skewed set} $OU(I)$ as follows:

$$OU(I) = {\eta(a,b,c,d): \eta(a,b,c,d) \in \mathcal{PRS}_{\frac{|I|}{2^{300}},\frac{(2^{200}+1)|I|}{2^{200}}, \frac{|I|}{2^{200}}}, I \subset [b, c] } $$

Also define $FI(I) = \frac{1}{|I|^4} \int_{\eta(a,b,c,d) \in OI(I)} \left|\mathbb{E}(a,b,c,d)B\right|^q$.

\begin{proposition}\label{Fouriercoeff}
If $ F $ is a 	symmetric skewed segment (considered as a function on the circle) with $ d \asymp d-c $, then the total variation of its Fourier coefficients $ \hat{F}(n) $ is bounded by $ d  $,  $\sum_{k=-\infty}^{\infty} |\widehat{F}(k+1) - \widehat{F}(k)| \preceq d$.
\end{proposition}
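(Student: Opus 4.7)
The plan is to compute $\widehat F(n)$ explicitly from the piecewise-linear structure of $F$ and split $\sum_n |\widehat F(n{+}1) - \widehat F(n)|$ into low- and high-frequency parts at the threshold $|n|\asymp 1/d$, each contributing $O(d)$. Assuming (after translating) that $F = \eta(-d,-c,c,d)$ is centered at the origin, set $h := d-c = b-a$, so the hypothesis reads $h\asymp d$. Since $F'' = h^{-1}(\delta_{-d}-\delta_{-c}-\delta_c+\delta_d)$ as a distribution, the formula $\widehat F(n) = -\widehat{F''}(n)/n^2$ yields
\[
\widehat F(n) = \frac{\cos(nc) - \cos(nd)}{\pi h n^2}, \qquad n\ne 0,
\]
and $\widehat F(0) = (d+c)/(2\pi)$.

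For $|n| > 1/d$, I decompose
\[
\widehat F(n{+}1) - \widehat F(n) = \frac{[\cos((n{+}1)c) - \cos(nc)] - [\cos((n{+}1)d) - \cos(nd)]}{\pi h (n{+}1)^2} + \Bigl(\frac{1}{(n{+}1)^2}-\frac{1}{n^2}\Bigr)\frac{\cos(nc) - \cos(nd)}{\pi h}.
\]
Using $\cos((n{+}1)x) - \cos(nx) = -2\sin((n{+}\frac12)x)\sin(x/2)$ to bound each cosine difference by $|x|\le d$, and $|1/(n{+}1)^2 - 1/n^2| \preceq 1/|n|^3$, the summand is $\preceq 1/n^2 + 1/(d|n|^3)$. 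Summing over $|n|>1/d$ gives $O(d)$, using $\sum_{|n|>1/d} 1/n^2 \asymp d$ and $\sum_{|n|>1/d} 1/|n|^3 \asymp d^2$.

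For $|n| \le 1/d$, I Taylor-expand $\cos(nc) - \cos(nd) = \frac{1}{2}(d^2-c^2)n^2 - \frac{1}{24}(d^4-c^4)n^4 + \cdots$; after dividing by $\pi h n^2$, $\widehat F(n) = (d+c)/(2\pi) - (d+c)(d^2+c^2)n^2/(24\pi) + \cdots$ is an even entire function. The leading coefficient of $\widehat F'$ has magnitude of order $d^3$, and higher-order corrections are controlled geometrically since $d|y|\le 1$, giving $|\widehat F'(y)| \preceq d^3|y|$ uniformly for $|y|\le 1/d$. Hence $|\widehat F(n{+}1) - \widehat F(n)| \preceq d^3(|n|{+}1)$, summing to $d^3 \cdot (1/d)^2 = d$ over $|n|\le 1/d$. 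Combining the two regimes yields $\sum_n |\widehat F(n{+}1) - \widehat F(n)| \preceq d$.

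The main subtlety I expect is the low-frequency estimate: the naive bound $|\widehat F(n{+}1) - \widehat F(n)| \preceq \sup|\widehat F|\preceq d$, summed over $O(1/d)$ frequencies, yields only $O(1)$, too weak when $d<1$. Extracting the extra factor of $|n|$ from the Taylor expansion of the smooth $\widehat F$ near zero is precisely the gain that brings the low-frequency sum down to $O(d)$, matching the high-frequency side.
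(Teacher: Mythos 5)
Your proof is correct and follows the same overall structure as the paper's (compute $\widehat F(n)$ by two integrations by parts, then split the sum at the threshold $|n|\asymp 1/(d-c)\asymp 1/d$). The two arguments diverge in the details of each piece, and the comparison is worth spelling out.

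For the low-frequency regime $|n|\le 1/d$, the paper uses the simpler uniform bound $\bigl|\frac{d}{ds}\widehat F(s)\bigr|\le \int_{-d}^d|xF(x)|\,dx\preceq d^2$, then multiplies by the $O(1/(d-c))$ terms to get $O(d^2/(d-c))\preceq d$. Your Taylor-expansion argument giving the pointwise decay $|\widehat F'(y)|\preceq d^3|y|$ is a genuine refinement, but an unnecessary one: the uniform bound $\preceq d^2$, combined with the number of terms $\asymp 1/d$, already gives $O(d)$. You correctly observed that the cruder bound $|\widehat F(n{+}1)-\widehat F(n)|\preceq\sup|\widehat F|\preceq d$ would be too weak, but then slightly over-corrected — the intermediate bound via $\int|xF|$ would have done.

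For the high-frequency regime $|n|>1/d$, your argument does real work that the paper elides. The paper asserts the tail sum is ``trivially $\preceq d$'' from $|\widehat F(n)|\preceq (d-c)^{-1}n^{-2}$ alone, but the trivial estimate $\sum_{|k|>1/(d-c)}|\widehat F(k{+}1)-\widehat F(k)|\le 2\sum_{|k|>1/(d-c)}|\widehat F(k)|\preceq (d-c)^{-1}\sum_{|k|>1/(d-c)}k^{-2}\asymp 1$ gives only $O(1)$, not $O(d)$, which is a genuine loss when $d$ is small (and the $O(d)$ scaling is what is actually used downstream in Corollary~\ref{convol}). Your decomposition of $\widehat F(n{+}1)-\widehat F(n)$ into a ``cosine increment'' term — bounded by $d/(hn^2)\asymp 1/n^2$ via $|\cos((n{+}1)x)-\cos(nx)|\le |x|$ — and a ``$1/n^2$ increment'' term — bounded by $1/(hn^3)\asymp 1/(dn^3)$ — is exactly what extracts the extra factor of $d$, and summing each over $|n|>1/d$ gives $O(d)$ as needed. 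So your high-frequency estimate is not just a different route; it fills in a cancellation the paper's ``trivially'' glosses over.
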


\begin{proof}
Integration by parts gives $ |\hat{F} (n)| \preceq (d-c)^{-1}   n^{-2} $. Thus, the sum of $  |\widehat{F}(k+1) - \widehat{F}(k)| $ over $ |k| > (d-c)^{ -1 } $ is trivially $ \preceq d $. On the other hand,  \[ |\widehat{F}(k+1) - \widehat{F}(k)| \le \int_k^{k+1} \Bigl|\frac{d\widehat F(s)}{ds}\Bigr| ds \le \max_{ s \in \mathbb R } \Bigl|\frac{d\widehat F(s)}{ds}\Bigr|  \le \int_{-d}^d |x F ( x )| dx \preceq d^2 \] as $ |F | \le 1 $.  Now the sum of $  |\widehat{F}(k+1) - \widehat{F}(k)| $  over $ |k| \le (d-c)^{ -1 } $   is $ \preceq d^2/(d-c) \preceq d  $
by the assumption $ d \asymp d-c $, and the result follows. 
\end{proof}


\begin{corollary}\label{convol}
	Let $\eta(-d, -c, c, d)\in \mathcal{PRS}_{\frac{|I|}{2^{600}}, 2^{600}|I|, 2^{600}|I|}$ , $d \preceq 1$. We will treat $\eta(a, b, c, d)$ as a function on the circle. Then there exist real numbers $\alpha_0, \alpha_1, \ldots$ such that $\sum |\alpha_i| \preceq d$ and 
\begin{equation}\label{Four} F \ast \eta(-d, -c, c, d) = \sum_{i=0}^{\infty} \alpha_i S_i(F) \end{equation} 
 for all trigonometric polynomials $ F $. 	
\end{corollary}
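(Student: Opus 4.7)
The plan is to apply Abel summation to the Fourier expansion of $F \ast \eta$, thereby converting $\widehat{\eta}(n)$ into a telescoping tail whose partial sums are exactly the partial Fourier projections $S_i(F)$.

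First I would observe that since $\eta(-d,-c,c,d)$ is real and even about the origin, its Fourier coefficients are real and satisfy $\widehat\eta(-n) = \widehat\eta(n)$. Setting $\alpha_i := \widehat\eta(i) - \widehat\eta(i+1)$ for $i \ge 0$, the Riemann--Lebesgue lemma gives $\widehat\eta(N) \to 0$ as $N \to \infty$, so telescoping yields
\[
\widehat\eta(n) \;=\; \widehat\eta(|n|) \;=\; \sum_{i \ge |n|} \alpha_i \qquad \text{for every } n \in \mathbb{Z}.
\]
For a trigonometric polynomial $F$, only finitely many coefficients $\widehat F(n)$ are nonzero, so there is no convergence issue in interchanging summations, and
\[
F \ast \eta(-d,-c,c,d) \;=\; \sum_n \widehat F(n)\, \widehat\eta(n)\, e^{int} \;=\; \sum_{i=0}^\infty \alpha_i \sum_{|n| \le i} \widehat F(n)\, e^{int} \;=\; \sum_{i=0}^\infty \alpha_i\, S_i(F),
\]
which is exactly the identity \eqref{Four}.

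It remains to bound $\sum_i |\alpha_i|$. Trivially $\sum_{i \ge 0} |\alpha_i| \le \sum_{k \in \mathbb{Z}} |\widehat\eta(k+1) - \widehat\eta(k)|$, and I would invoke Proposition \ref{Fouriercoeff}. Its hypothesis $d \asymp d - c$ must be checked against the $\mathcal{PRS}$ parameters: from $\eta \in \mathcal{PRS}_{|I|/2^{600},\, 2^{600}|I|,\, 2^{600}|I|}$ one reads off $d - c \ge |I|/2^{600}$ and $d - a = 2d \le 2^{600}|I|$, hence $d \ge d - c$ and $d/(d-c) \le 2^{1199}$, so indeed $d \asymp d - c$ up to an absolute constant. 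Proposition \ref{Fouriercoeff} then yields $\sum_k |\widehat\eta(k+1) - \widehat\eta(k)| \preceq d$, which finishes the proof.

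The argument is essentially one step of Abel summation together with the Fourier-side variational estimate of Proposition \ref{Fouriercoeff}; the only non-cosmetic verification is that the $\mathcal{PRS}$ parameters force $d \asymp d - c$, which is immediate from unpacking the definition. I do not anticipate a real obstacle.
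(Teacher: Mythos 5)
Your argument is the same as the paper's: define $\alpha_i = \widehat\eta(i) - \widehat\eta(i+1)$, telescope to recover $\widehat\eta(n)$ from the tail sums (equivalently, match Fourier coefficients on both sides of \eqref{Four}), and bound $\sum|\alpha_i|$ via Proposition \ref{Fouriercoeff}. You are somewhat more explicit than the paper in spelling out the evenness of $\eta(-d,-c,c,d)$ and in verifying that the $\mathcal{PRS}$ constraints force $d \asymp d-c$, but the substance is identical.
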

	
\begin{proof}
Let $\alpha_i = \widehat{\eta(-d, -c, c, d)}(i) - \widehat{\eta(-d, -c, c, d)}(i+1)$. Identity \eqref{Four} holds since  the right and left hand sides of it have the same Fourier coefficients,  the $k$-th Fourier coefficient being $ \sum_{i=k}^{\infty} \alpha_i \widehat{F}(k)$. Finally $\sum |\alpha_i| \preceq d$ by Proposition \ref{Fouriercoeff}. 
\end{proof}

\begin{corollary}\label{maxS}
	Let $G \in A_1$, $1 \leq a > b > 0, 2^{20}(a-b) > b$, and $\mu$ be a finite measure on the circle. Then 
\[\int |\mathcal{SK}_{a,b}(G)|^q d\mu \leq \max_n \int \left| S_n(G) \right|^q d\mu . \]  \end{corollary}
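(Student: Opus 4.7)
The plan is to reduce the corollary to Corollary \ref{convol} by expanding $G \ast \eta(-a,-b,b,a)$ as a signed linear combination of Dirichlet partial sums $S_i(G)$, and then applying Jensen's inequality on the summation index. This is essentially the only available tool at this point in the paper for converting a convolution against a symmetric skewed segment into an expression involving the $S_n$'s.

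First I would check the hypotheses of Corollary \ref{convol} for the symmetric skewed segment $\eta(-a,-b,b,a)$. The proportionality condition $a \asymp a-b$ (which corresponds to ``$d\asymp d-c$'' in Proposition \ref{Fouriercoeff}) follows from the hypothesis $2^{20}(a-b)>b$, since it gives $a=b+(a-b)<(2^{20}+1)(a-b)$; the scale condition $a\preceq 1$ is immediate. Choosing any $|I|$ with, say, $|I|\asymp a-b$ places $\eta(-a,-b,b,a)$ in the appropriate $\mathcal{PRS}$-class, so Corollary \ref{convol} produces real numbers $\alpha_0,\alpha_1,\ldots$ with $\sum_i|\alpha_i|\preceq a$ and
\[ G\ast\eta(-a,-b,b,a)=\sum_i\alpha_i\,S_i(G). \]
Dividing by $\int\eta=a+b$ and recalling that $\mathcal{SK}_{a,b}(G)=\frac{1}{a+b}G\ast\eta(-a,-b,b,a)$ gives $\mathcal{SK}_{a,b}(G)=\sum_i\gamma_i S_i(G)$ with $\gamma_i=\alpha_i/(a+b)$, and hence $\sum_i|\gamma_i|\preceq a/(a+b)\preceq 1$.

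The rest is a pointwise Jensen estimate followed by integration. The triangle inequality yields $|\mathcal{SK}_{a,b}(G)|\le\sum_i|\gamma_i|\,|S_i(G)|$ pointwise, and applying Jensen's inequality to the probability measure $|\gamma_i|/\sum_j|\gamma_j|$ with the convex function $t\mapsto|t|^q$ upgrades this to
\[ |\mathcal{SK}_{a,b}(G)|^q\le\Bigl(\sum_i|\gamma_i|\Bigr)^{q-1}\sum_i|\gamma_i|\,|S_i(G)|^q. \]
Integrating against $\mu$ and bounding each $\int|S_i(G)|^q\,d\mu$ by $\max_n\int|S_n(G)|^q\,d\mu$ gives the claim (in the paper's ``$\preceq$'' sense). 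The one subtle point — essentially the main obstacle — is the absolute multiplicative constant inherited from the ``$\preceq$'' in Corollary \ref{convol}: a literal constant of $1$ would require $\sum_i|\gamma_i|\le 1$ on the nose, which cannot hold exactly because $\widehat{\eta}$ oscillates. So the plan matches the statement only if the ``$\le$'' in the corollary is read in the paper's usual up-to-absolute-constants sense.
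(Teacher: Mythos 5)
Your proof is essentially the same as the paper's: both use Corollary~\ref{convol} to write the convolution against $\eta(-a,-b,b,a)$ as $\sum_i\alpha_i S_i(G)$ and then apply a convexity bound (your pointwise Jensen with weights $|\gamma_i|/\sum|\gamma_j|$ is arithmetically identical to the paper's triangle inequality in $L^q(d\mu)$ after raising to the $q$-th power). The one small thing the paper does that you skip is the reduction step: Corollary~\ref{convol} is stated only for trigonometric polynomials $F$, so the paper first replaces $G$ by $S_n(G)$, establishes the bound uniformly in $n$, and lets $n\to\infty$ using the uniform convergence $S_n(G)\to G$ for $G\in A_1$. You apply the decomposition directly to $G$; this does hold (the identity follows by matching Fourier coefficients and using $\widehat{\eta}(n)\to0$, which is legitimate since $\sum|\alpha_i|<\infty$ and the $S_i(G)$ are uniformly bounded when $G\in A_1$), but strictly speaking it requires a one-line extension of Corollary~\ref{convol} beyond its stated hypotheses, which you should flag. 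Your remark that the ``$\le$'' in the statement must be read as ``$\preceq$'' is correct and applies equally to the paper's own argument.
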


	\begin{proof}
In the current notation, $\mathcal{SK}_{a,b}(G) = \frac{1}{a+b} G \ast \eta(-a, -b, a, b)$. Since $S_n(G) \to G$ uniformly, it suffices to prove that 
\[ \int \left| S_n(G) \ast \eta (-a,-b,b,a) \right|^q d\mu \preceq a^q \max_{ i \le n } \int \left|S_i(G) \right|^q d\mu\] 
uniformly in $ n $. By Corollary \ref{convol}, $S_n(G) \ast \eta(-a,-b,b,a) = \sum_{i=0}^n \alpha_i S_i(G)$, hence by the triangle inequality in $ L^q ( d\mu ) $ we have $$\int \left| S_n(G) \ast \eta (-a,-b,b,a) \right|^q d\mu \preceq (\sum |\alpha_i|)^q \max_{i\le n } \int |S_n|^q d\mu \preceq a^q \max_n \int |S_n|^q d\mu $$ as required.
\end{proof}

\begin{lemma}\label{Disj1}
	Let a disjoint set of segments $\mathcal{M}$ be chosen on the circle, such that the lengths of any two of them differ by no more than a factor of 2. Then $\sum_{I \in \mathcal{M}} \mathcal{SAI}(I) \preceq 1$.
\end{lemma}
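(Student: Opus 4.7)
\textit{Proof plan.} My plan is to unfold $\mathcal{SAI}(I)$ into an iterated integral, use the disjointness of $\mathcal{M}$ to collapse the sum over $I$ into a single integral over $\mathbb T$, and then control the resulting inner integral by a uniform bound on partial Fourier sums of $B$ against the weight $|1-A|^q$. Rewriting $\mathbb{E}(z-d',z-c',z+c',z+d')B = \mathcal{SK}_{d',c'}(B)(z)$ gives
\[ \mathcal{SAI}(I) = \frac{2^{1200}}{|I|^2} \iint_{R(I)} \int_I |\mathcal{SK}_{d',c'}(B)(z)|^q \,|1-A(z)|^q\, dz\, dc'\, dd', \]
where $R(I) = \{(c',d') : |I|/2^{600} < d'-c' < 2^{600}|I|,\ 0 < 2d' < 2^{600}|I|\}$. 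Setting $L := \min_{I \in \mathcal{M}} |I|$, the comparability hypothesis gives $2^{1200}/|I|^2 \preceq 1/L^2$ and $R(I) \subset R^*$ for a common rectangle $R^* \subset \mathbb R^2$ with Lebesgue area $\preceq L^2$. Swapping the order of integration and summation and using the disjointness of $\mathcal M$ to absorb $\sum_I \int_I$ into $\int_{\mathbb T}$ gives
\[ \sum_{I \in \mathcal{M}} \mathcal{SAI}(I) \preceq \frac{1}{L^2} \iint_{R^*} \int_{\mathbb T} |\mathcal{SK}_{d',c'}(B)(z)|^q |1-A(z)|^q\, dz\, dc'\, dd'. \]

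Next I would bound the inner $L^q$ integral uniformly in $(c',d') \in R^*$. Since the skewed segment $\eta(-d',-c',c',d')$ satisfies the $\mathcal{PRS}$ hypothesis by the very definition of $R^*$, Corollary \ref{convol} furnishes a decomposition $\mathcal{SK}_{d',c'}(B) = \sum_i \gamma_i S_i(B)$ with $\sum_i |\gamma_i| \preceq d'/(d'+c') \le 1$. Jensen's inequality then yields the pointwise bound $|\mathcal{SK}_{d',c'}(B)(z)|^q \preceq \sum_i |\gamma_i|\,|S_i(B)(z)|^q$. Uniformly in $i$, the estimate $\int_{\mathbb T} |S_i(B)|^q|1-A|^q\, dz \preceq 1$ follows from assumption (3) of Theorem \ref{main} together with the dual Hausdorff--Young inequality $\|F\|_{L^q} \preceq |F|_{A_p}$, once one observes the pointwise comparison $|(1-A)S_i(B)| = |\Re(1-\widetilde A)|\,|\Re S_i(\widetilde B)| \le |(1-\widetilde A)S_i(\widetilde B)|$, valid because $A = \Re\widetilde A$, $B = \Re\widetilde B$.

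Assembling the pieces, the inner integral is $\preceq \sum_i |\gamma_i| \preceq 1$ uniformly in $(c',d') \in R^*$, so $\sum_I \mathcal{SAI}(I) \preceq L^{-2} \cdot |R^*| \preceq 1$, as required. The step I expect to be the real point of the argument is the matching of scales: the hypothesis that all $|I|$ differ by at most a factor of $2$ is precisely what makes the normalizing factor $|I|^{-2}$ in the definition of $FB$ cancel against the Lebesgue area of the integration region $R^*$. Without this comparability, one would instead encounter an overlap factor equal to the number of distinct dyadic scales present in $\mathcal{M}$, and the purely $L^q$-based argument would degenerate. Everything else reduces to invoking Corollary \ref{convol} and the standing hypothesis (3) on $(1-\widetilde A) S_i(\widetilde B)$.
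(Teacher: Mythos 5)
Your proof is correct and follows essentially the same route as the paper: disjointness plus the comparability of lengths collapses the sum into a single integral over a region of area $\preceq L^2$ times $\mathbb{T}$, and then $\mathcal{SK}_{d',c'}(B)$ is expressed via Corollary~\ref{convol} as an $\ell^1$-combination of partial sums $S_i(B)$, whose $L^q(|1-A|^q\,dz)$ norms are controlled by assumption (3) and Hausdorff--Young. The only cosmetic difference is that you unfold Corollary~\ref{maxS} by hand (using Corollary~\ref{convol} together with Jensen's inequality rather than citing the packaged statement), and you spell out the pointwise comparison $|(1-A)S_i(B)|\le|(1-\widetilde A)S_i(\widetilde B)|$ linking the real parts to $\widetilde A,\widetilde B$, a step the paper elides.
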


\begin{proof}
	Denote the length of any of the segments $\mathcal{M}$ by $l$. Let us also denote by $\mathcal{P}_l$ the set of such pairs $(a, b)$ that $\eta(-a, -b, b, a) \in \mathcal{PRS}_{\frac{l}{2^{1000}}, 2^{1000}l, 2^{1000}l}$. Then
\begin{multline*} \sum_{I \in \mathcal{M}} \mathcal{SAI}(I) \preceq \frac{1}{l^2} \int_{\mathcal{P}_l} \int_{y \in \mathbb{T}} |\mathbb{E}(y-a, y-b, y+b, y+a)B|^q |1-A(x)|^q dx \preceq \\
\preceq \frac{1}{l^2} \int_{(a, b) \in \mathcal{P}_l} \int |\mathcal{SK}_{a,b}(B)(x)|^q |1-A(x)|^q dx \preceq \frac{1}{l^2} \int_{(a, b) \in \mathcal{P}_l} \max_n \left\| S_n(B)(1-A) \right\|_{L_q}^q
\end{multline*}  by Corollary \ref{maxS}, since $\int |1-A|^q < \infty$. But by the Hausdorff-Young inequality and condition 3 of the theorem, $1 \succeq \left| S_n(B)(1-A)\right|_{A_p} \succeq \left\| S_n(B)(1-A)\right\|_{L_q} $ for any $ n$, whence $$\frac{1}{l^2} \int_{(a, b) \in \mathcal{P}_l} \max_n \left\|S_n(1-A)\right\|_{L_q}^q \preceq \frac{1}{l^2} \int_{(a, b) \in \mathcal{P}_l} \preceq 1,$$ as required.
\end{proof}
		
\begin{proposition}\label{doubling}
		Let $a, b, c > 0$, and $2a < c$. Then $FB_{a, b, c}(z) \preceq FB_{a, b, \frac{c}{2}}(z)$.
\end{proposition}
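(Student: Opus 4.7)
The plan is to prove the doubling estimate by bounding the integrand $|\mathbb{E}_{\eta(w,d')}B(z)|^q$ on the excess strip $c/2<w<c$ in terms of integrands of halved transition width, and then integrating.

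The starting point is to recast the skewed-segment average as a composition of symmetric averages. Using the factorization $\eta(-d',-c',c',d') = \tfrac{1}{w}\chi_{[-R,R]} * \chi_{[-w/2,w/2]}$ with $R=d'-w/2$, $w=d'-c'$ (a direct computation from $\chi_{[-a,a]}*\chi_{[-b,b]}$, already used in Corollary~\ref{convol}), one obtains
$$\mathbb{E}(z-d',z-c',z+c',z+d')B = A_{w/2}\,A_{d'-w/2}\,B(z),\qquad A_rg(z):=(2r)^{-1}\!\int_{z-r}^{z+r}\!g.$$
Next, the elementary halving identity $A_{2r}g(z)=\tfrac12[A_rg(z+r)+A_rg(z-r)]$, applied to the outer average with $r=w/4$, together with Jensen's inequality on $|\cdot|^q$, yields the pointwise bound
$$|\mathbb{E}_{\eta(w,d')}B(z)|^q \;\le\; \tfrac12\!\sum_{\varepsilon=\pm}|\mathbb{E}_{\eta(w/2,\,d'-w/4)}B(z+\varepsilon w/4)|^q.$$
After this halving the new transition width $w/2$ lies in $(c/4, c/2)$, which is contained in the admissible range $(a, c/2)$ under the hypothesis $2a<c$ (in the easy case $a\le c/4$; the remaining regime $c/4<a<c/2$ is handled separately since the integration ranges in $w$ then have comparable lengths, and a direct comparison using the trivial bound $|\mathbb{E}_\eta B|^q \preceq \|B\|_\infty^q \preceq 1$ suffices).

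The remaining step is to absorb the shifts $z\pm w/4$ in the pointwise bound. Substituting $w=2w'$ and integrating the inequality over the excess region $(w,d')\in(c/2,c)\times(w,b/2)$, a change of variable $(w,d')\mapsto(w',d''):=(w/2,d'-w/4)$ transforms the integral into one over $(w',d'')$ ranging in a region compatible with that of $FB_{a,b,c/2}(z)$. The shifted evaluations $\mathbb{E}_{\eta(w',d'')}B(z\pm w'/2)$ are then recognized as symmetric averages centered at $z\pm w'/2$; a further Fubini/Jensen step, using translation invariance of the $d''$-integration, dominates each shifted evaluation by an integral average of symmetric-trapezoid means centered at $z$ over a short $d''$-interval, at the cost of a bounded multiplicative constant. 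Summing yields the required bound $FB_{a,b,c}(z)\preceq FB_{a,b,c/2}(z)$.

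The main obstacle lies in this final absorption step: the shifted quantity $\mathbb{E}_{\eta(w',d'')}B(z\pm w'/2)$ is \emph{not} of the form $\mathbb{E}_{\eta(\cdot,\cdot)}B(z)$, since the relevant trapezoid is centered away from $z$. The Jensen/Fubini argument must be executed carefully to reinterpret the shifted average as an integrated symmetric average at $z$ without losing control of the absolute constant; the admissibility of the resulting parameters $(w',d'')$ within the region defining $FB_{a,b,c/2}(z)$ is exactly what the hypothesis $2a<c$ guarantees.
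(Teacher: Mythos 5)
Your proposal takes a genuinely different route from the paper, but it has two gaps that I do not believe can be closed along the lines you sketch.

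The first and central gap is the one you yourself flag: after factorizing $\mathbb{E}_{\eta(w,d')}B(z)=A_{w/2}A_{d'-w/2}B(z)$ and halving the outer average, the resulting quantities $\mathbb{E}_{\eta(w/2,\,d'-w/4)}B(z\pm w/4)$ are symmetric trapezoid averages centered at $z\pm w/4$, not at $z$. The claim that a ``Fubini/Jensen step, using translation invariance of the $d''$-integration'' dominates these by averages centered at $z$ does not hold up: the definition of $FB_{a,b,c/2}(z)$ integrates over the trapezoid parameters $(c',d')$ only, with the \emph{center} fixed at $z$. There is no integration over $z$ here (contrast with $\mathcal{SSI}(I)$, where one integrates over $z\in I$), so there is no translation invariance to exploit. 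A trapezoid centered at $z+w/4$ is not expressible as an average (or even a signed combination with controlled $\ell^1$ weight) of symmetric trapezoids centered at $z$ whose parameters stay in the domain of $FB_{a,b,c/2}(z)$; varying $d''$ widens or narrows a symmetric trapezoid about $z$, it does not translate it. So the absorption step is not merely delicate, it is blocked.

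The second gap is the treatment of the regime $c/4<a<c/2$. You propose to use the ``trivial bound $|\mathbb{E}_\eta B|^q\preceq\|B\|_\infty^q\preceq 1$''. But $B=\Re\widetilde B$ is normalized only by $\mathbb{E}(\widetilde B)=1$; there is no $L^\infty$ control on $B$, and the implied constant in the proposition may depend only on $p$. Moreover, even if $B$ were bounded, a bound of the excess integral by a constant times the \emph{measure} of the excess strip is not a bound by $FB_{a,b,c/2}(z)$, which can be arbitrarily small.

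For comparison, the paper's argument is far more elementary. It writes the domain of $FB_{a,b,c}(z)$ as the disjoint union
\[
\mathcal M_1=\{(c',d'): a<d'-c'<c/2,\; 0<2d'<b\}, \qquad
\mathcal M_2=\{(c',d'): c/2<d'-c'<c,\; 0<2d'<b\}.
\]
The integral over $\mathcal M_1$ is exactly $FB_{a,b,c/2}(z)$. For the integral over $\mathcal M_2$, the paper observes that whenever $c/2<d'-c_1<c$ and $a<d'-c_2<c/2$ one has the pointwise inequality $\eta(-d',-c_1,c_1,d')\le\eta(-d',-c_2,c_2,d')$, and that the normalizing denominators $\int\eta=d'+c'$ are $\asymp d'$ in both ranges; this lets one compare the integral over $\mathcal M_2$ with that over $\mathcal M_1$ directly. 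No convolution factorization or halving is needed, and the re-centering problem never arises because the center is never moved.
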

\begin{proof}
The domain of  integration in the definition of $FB_{a, b, c}(z)$ splits into the disjoint sets \begin{align*} \mathcal M_1 = \{ ( c^\prime , d^\prime) : \, a < d^\prime - c^\prime < c/2 ; d^\prime \in [ 0 , b/2 ] \} \\ \mathcal M_2 =  \{ ( c^\prime , d^\prime) : c/2  < d^\prime - c^\prime < c ; d^\prime \in [ 0 , b/2 ] \} . \end{align*} 
The integral over $ \mathcal M_1 $ is $ FB_{a, b, \frac{c}{2}}(z)$ and the integral over $ \mathcal M_2 $ is estimated above by $ FB_{a, b, \frac{c}{2}}(z)$ just because $ \eta ( -	d^\prime , -c_1 , c_1 , d^\prime ) \le \eta ( -	d^\prime , -c_2 , c_2 , d^\prime ) $ whenever $ c/2  < d^\prime - c_1 < c $, $ a < d^\prime - c_2 < c/2 $. Since the denominators in the expectations in the integrands are $ \asymp d $ in both integrals, the result follows. 
\end{proof}

	\begin{proposition}\label{long} Let a segment $I$ and a set $\mathcal{M} \subset I, |\mathcal{M}| \ge \frac{98|I|}{99}$, $(u, v) \in \mathcal{P}{|I|}, v - u < \frac{I}{10}$ be given. Then $\forall y \in I$
		\begin{multline*} |\mathbb{E}(y - u, y - v, y + v, y + u)B| \preceq D(u,v) , \\
		D(u, v) : = \frac{1}{|I|}(
		\int_{t \in \mathcal{M}, \eta(a, b, y - u, y - v) \in \mathcal{SYMS}_t} |\mathbb{E}(a, b, y - u, y - v)B|  + \\
		\int_{t \in \mathcal{M}, \eta(y - u, y - v, a, b) \in \mathcal{SYMS}_t} |\mathbb{E}(y - u, y - v, a, b)B| +\\
		\int_{t \in \mathcal{M}, \eta(y + v, y + u, a, b) \in \mathcal{SYMS}_t} |\mathbb{E}(a, b, y + v, y + u)B| + \int_{t \in \mathcal{M}, \eta(y + v, y + u, a, b) \in \mathcal{SYMS}_t} |\mathbb{E}(y + v, y + u, a, b)B|).
\end{multline*} 
\end{proposition}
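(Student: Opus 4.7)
The target inequality is a mean-value bound: the pairing $\mathbb{E}_{\eta_0}B$ with the symmetric trapezoid $\eta_0 = \eta(y-u, y-v, y+v, y+u)$ centered at $y$ is to be controlled by an average, over $t \in \mathcal{M}\subset I$, of pairings $|\mathbb{E}_{\eta_i(t)}B|$ against SSSTs $\eta_i(t)$ centered at $t$. The proof should proceed via an exact integral identity plus a triangle inequality on the complement of $\mathcal{M}$. I first pin down the geometric role of the four families: a direct calculation shows that $\eta(a,b,y-u,y-v)\in\mathcal{SYMS}_t$ forces $(a,b)=(2t-y+v,\,2t-y+u)$, is a valid SSST only for $t$ lying to the left of $y-u$, and has its right slope identically equal, as a function, to the left slope of $\eta_0$; the other three families behave analogously. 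The four admissibility regions $R_i$ for $t$ --- $\{t<y-u\}$, $\{t>y-v\}$, $\{t<y+v\}$, $\{t>y+u\}$ --- each have measure comparable to $|I|$, and notably, setting $t=y$ in either $\eta(y-u,y-v,a,b)$ or $\eta(a,b,y+v,y+u)$ already recovers $\eta_0$ itself.

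The core step is to construct an exact representation
\[
\int B\,\eta_0 \;=\; \sum_{i=1}^{4} \int_{R_i} w_i(t)\,\Bigl(\textstyle\int B\,\eta_i(t)\Bigr)\, dt
\]
with bounded weights $w_i(t)\preceq 1/|I|$. This is obtained by splitting $\eta_0$ into its three pieces on $[y-u,y-v]$, $[y-v,y+v]$, $[y+v,y+u]$, writing each piece via Fubini as a weighted average of either the matched slope or the constant region of the relevant $\eta_i(t)$, and arranging the ``boundary'' terms (top and far slope of each $\eta_i(t)$ outside $\mathrm{supp}\,\eta_0$) to cancel across the four families; the two central families designed to have $[y-v,y+v]$ in their constant region are what allow the top of $\eta_0$ to be reconstructed by averaging over $t$. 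Once the identity is in hand, restricting each integral to $R_i\cap\mathcal{M}$ costs at most $|R_i\setminus\mathcal{M}|\le|I|/99$ in $t$-measure; since $\int\eta_i(t)\asymp u+v$ uniformly and $|B|_{L^\infty}\le|B|_{A_1}<\infty$, this missing contribution is $O((u+v)/|I|)\cdot|I|/99=O(u+v)/99$ and is absorbed into the $\preceq$ constant. Applying the triangle inequality inside each remaining integral, using $\int B\,\eta_i(t)=(\int\eta_i(t))\,\mathbb{E}_{\eta_i(t)}B$, and dividing through by $\int\eta_0\asymp u+v$ yields the claim $|\mathbb{E}_{\eta_0}B|\preceq D(u,v)$.

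The main obstacle is the explicit construction of the weights $w_i$ in the integral identity together with the verification that they are bounded. Setting $t=y$ alone would trivially reproduce $\eta_0$ from a single $\eta_i(t)$, but that does not help because $y$ need not lie in $\mathcal{M}$; one genuinely needs averaging over a set of measure $\asymp|I|$. The natural route is to use the simple structure of $\mathcal{D}\eta_0$, namely $(u-v)^{-1}(\chi_{[y-u,y-v]}-\chi_{[y+v,y+u]})$, and to write each of the two characteristic functions as a Fubini superposition of the corresponding slope indicators of the $\eta_i(t)$, then integrate in $x$ by parts to recover $\eta_0$; verifying that the inversion produces weights uniformly bounded by $1/|I|$ (rather than merely locally integrable) is the delicate combinatorial calculation at the heart of the proof.
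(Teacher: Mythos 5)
Your overall strategy (represent $\eta_0 = \eta(y-u,y-v,y+v,y+u)$ as an average over $t$ of skewed segments sharing one slope with $\eta_0$, then restrict to $t$'s related to $\mathcal M$) is the right one and matches the spirit of the paper. But there is a genuine gap in how you handle the restriction to $\mathcal M$. You propose to build an exact identity $\int B\eta_0 = \sum_i \int_{R_i} w_i(t)\bigl(\int B\,\eta_i(t)\bigr)\,dt$ over the \emph{full} admissibility regions $R_i$, and then to throw away the contribution from $R_i\setminus\mathcal M$, estimating that error by $\|B\|_\infty\cdot(u+v)/99$ and ``absorbing it into the $\preceq$ constant.'' That last step is false: after dividing by $\int\eta_0 \asymp u+v$ the error is $O(\|B\|_\infty)$, a quantity that is \emph{not} dominated by $D(u,v)$ for arbitrary $B$, so you end up with $|\mathbb{E}_{\eta_0}B|\preceq D(u,v)+O(\|B\|_\infty)$, which is vacuous (the left-hand side is trivially $\le\|B\|_\infty$). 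No implied constant, however large, absorbs a $\|B\|_\infty$ term that $D(u,v)$ does not control.

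The paper avoids this entirely by using a \emph{pointwise} identity rather than a Fubini superposition: from $\eta(a,b,c,d)+\eta(c,d,e,f)=\eta(a,b,e,f)$, for each admissible parameter $t$ one has $\eta_0 = \eta_1(t)\pm\eta_2(t)$ \emph{exactly} (a sum when the splitting edge lies inside the top of $\eta_0$, a difference when it lies outside --- this is the case split $v\ge|I|/6$ versus $v\le|I|/6$ which your sketch does not address). Because the identity holds for each individual $t$, one is free to average \emph{only} over the set $\mathcal{IT}$ of those $t$ for which both resulting centres fall in $\mathcal M$; the measure bound $|\mathcal{M}|\ge\frac{98}{99}|I|$ guarantees $|\mathcal{IT}|\succeq|I|$, and there is no residual $\|B\|_\infty$-dependent error to estimate at all. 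Your ``delicate combinatorial inversion'' of $\mathcal D\eta_0$ into slope indicators is thus unnecessary, and more importantly, even if you carried it out, the subsequent restriction step would still be broken unless you arrange for the weights $w_i$ to be supported on $\mathcal M$ from the start --- which is exactly what the per-$t$ identity delivers for free.

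One further subtlety your plan does not engage with: in the difference case ($v\le|I|/6$) the two auxiliary trapezoids $\eta_1(t),\eta_2(t)$ have $\int\eta_i(t)\asymp|I|$, which can be much larger than $\int\eta_0\asymp u+v$. So when you pass from the identity on $\int B\,\eta$'s to the corresponding statement about the averages $\mathbb{E}(\cdot)B$, the coefficients are not $O(1)$; one must keep track of the normalisations $\int\eta_i(t)/\int\eta_0$ rather than assume they are bounded, as your ``divide through by $\int\eta_0\asymp u+v$'' step implicitly does.
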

		\begin{proof}
			Note that if $a < b < c < d < e < f$, then $\eta(a, b, c, d) + \eta(c, d, e, f) = \eta(a, b, e, f)$. Consider cases depending on the magnitude of $v$.
			\begin{enumerate}
				\item If $v \ge \frac{|I|}{6}$, then for $|t| \le \frac{|I|}{30}$, $t - \frac{u - v}{2} \in [-v, v]$ and $t + \frac{u - v}{2} \in [-v, v]$, respectively.
				$$\eta(y - u, y - v, y + u, y + v) =$$
				$$\eta(y - u, y - v, y + t - \frac{u - v}{2}, y + t + \frac{u - v}{2}) +$$
				$$\eta(y + t - \frac{u - v}{2}, y + t + \frac{u - v}{2}, y + v, y + u),$$
				and thus
				$$|\mathbb{E}(y - u, y - v, y + v, y + u)B| \prec$$
				$$|\mathbb{E}(y - u, y - v, y + t - \frac{u - v}{2}, y + t + \frac{u - v}{2})B| +$$
				$$|\mathbb{E}(y + t - \frac{u - v}{2}, y + t + \frac{u - v}{2}, y + v, y + u)B|.$$
				
				Denote by $\mathcal{IT}$ the set of such $t \in \left[-\frac{|I|}{30}, \frac{|I|}{30}\right]$ that $y + \frac{t - \frac{u + v}{2}}{2} \in \mathcal{M}$ and $y + \frac{t + \frac{u + v}{2}}{2} \in \mathcal{M}$. Note that $|\mathcal{IT}| \ge \frac{|I|}{60} - \frac{|I|}{99} \succeq |I|$. Hence,
				$$\mathbb{E}(y - u, y - v, y + v, y + u)B \preceq$$
				$$\frac{1}{|\mathcal{IT}|} \int{t \in \mathcal{IT}}(|\mathbb{E}(y - u, y - v, y + t - \frac{u - v}{2}, y + t + \frac{u - v}{2})B| +$$
				$$|\mathbb{E}(y + t - \frac{u - v}{2}, y + t + \frac{u - v}{2}, y + v, y + u)B|) \preceq D(u, v),$$
				which was to be shown.
				
				\item If $v \le \frac{|I|}{6}$, then for $|t| \ge \frac{2|I|}{5}$, $t - \frac{u - v}{2} \notin [-v, v]$ and $t + \frac{u - v}{2} \notin [-v, v]$, respectively. Assume $y$ is in the left half of the segment, the other case is symmetric.
				
				Then for $t \ge \frac{2}{5}$,
				$$\eta(y - u, y - v, y + v, y + u)B =$$
				$$\eta(y - u, y - v, y + t - \frac{u - v}{2}, y + t + \frac{u - v}{2}) -$$
				$$\eta(y + v, y + u, y + t - \frac{u - v}{2}, y + t + \frac{u - v}{2}),$$
				and thus
				$$|\mathbb{E}(y - u, y - v + y + v, y + u)B| \preceq$$
				$$|\mathbb{E}(y - u, y - v, y + t - \frac{u - v}{2}, y + t + \frac{u - v}{2})B| +$$
				$$|\mathbb{E}(y + v, y + u, y + t - \frac{u - v}{2}, y + t + \frac{u - v}{2})B|.$$
				
				Denote by $\mathcal{IT}$ the set of such $t \in \left[\frac{2|I|}{5}, \frac{|I|}{2}\right]$ that $y + \frac{t - \frac{u + v}{2}}{2} \in \mathcal{M}$ and $y + \frac{t + \frac{u + v}{2}}{2} \in \mathcal{M}$. Then $|\mathcal{IT}| \ge \frac{|I|}{40} - \frac{|I|}{99} \succeq |I|$ and
				$$|\mathbb{E}(y - u, y - v, y + v, y + u)B|
				\preceq$$
				$$\frac{1}{|\mathcal{IT}|} \int_{t \in \mathcal{IT}}(|\mathbb{E}(y - u, y - v, y + t - \frac{u - v}{2}, y + t + \frac{u - v}{2})B| +$$
				$$|\mathbb{E}(y + v, y + u, y + t - \frac{u - v}{2}, y + t + \frac{u - v}{2})B|) \preceq D(u, v),$$
				which was to be shown.
			\end{enumerate}
		\end{proof}
			
	\begin{corollary}\label{measu}
		Let $I$ be a segment and $\mathcal{M} \subset I$ with $|\mathcal{M}| \ge \frac{98|I|}{99}$. Then for all $y \in I$,
		$$
		FB_{\frac{|I|}{2^{400}}, 2^{400}|I|, 2^{400}|I|}(y) \preceq \frac{1}{|I|} \int_{\mathcal{M}} FB_{\frac{|I|}{2^{400}}, 2^{400}|I|, \frac{|I|}{2^{399}}}(z)  dz.
		$$
		\begin{proof}
Substituting the formula for $D$ from  Proposition \ref{long} and changing the orders of integration in each term, we obtain the required result.
				\end{proof}
			\end{corollary}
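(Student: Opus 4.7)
The approach has five steps: (i) use Proposition \ref{doubling} iteratively to shrink the $c$-parameter of the left-hand side; (ii) substitute the pointwise estimate from Proposition \ref{long} into every integrand of the defining integral of $FB$; (iii) raise to the $q$-th power via H\"older's inequality; (iv) apply Fubini to swap the $(c',d')$ and $t$ integrations; (v) perform an affine change of variables, for each of the four terms in $D(u,v)$, that identifies the inner integral with (a restriction of) $FB_{|I|/2^{400}, 2^{400}|I|, |I|/2^{399}}(t)$.

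Step (i) is immediate: since $2a < c$ holds at the start (with $a = |I|/2^{400}$ and $c = 2^{400}|I|$) and persists through many halvings, iterating Proposition \ref{doubling} about $799$ times gives
\[
FB_{|I|/2^{400}, 2^{400}|I|, 2^{400}|I|}(y) \;\preceq\; FB_{|I|/2^{400}, 2^{400}|I|, |I|/2^{399}}(y).
\]
Every $(c', d')$ in the resulting integration domain satisfies $d' - c' < |I|/2^{399} \ll |I|/10$, so Proposition \ref{long} applies with $(u, v) = (d', c')$. Raising $|\mathbb{E}(y - d', y - c', y + c', y + d')B| \preceq D(d', c')$ to the $q$-th power and invoking H\"older's inequality for the normalized measure $\chi_{\mathcal{M}}\, dt/|\mathcal{M}|$ together with $|\mathcal{M}| \le |I|$ (which upgrades $(\tfrac{1}{|I|}\int_\mathcal{M} f\,dt)^q$ to $\preceq \tfrac{1}{|I|}\int_\mathcal{M} f^q\,dt$), and handling the four summands of $D$ via the triangle inequality in $L^q$, we obtain
\[
|\mathbb{E}(y - d', y - c', y + c', y + d')B|^q \;\preceq\; \frac{1}{|I|} \int_{\mathcal{M}} \sum_{i=1}^{4} |\mathbb{E}_i(d', c', t) B|^q\, dt.
\]

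Inserting this pointwise bound into the definition of the reduced $FB$ and swapping orders of integration via Fubini, it remains to show that, for each fixed $t \in \mathcal{M}$ and each of the four terms, the inner double integral over $(c', d')$ is $\preceq FB_{|I|/2^{400}, 2^{400}|I|, |I|/2^{399}}(t)$. For the first term the SSST $\eta(2t - y + c', 2t - y + d', y - d', y - c')$ is centered at $t$ with half-widths $c'' = y - t - d'$ and $d'' = y - t - c'$. The map $(c', d') \mapsto (c'', d'')$ is affine with Jacobian one; the slope width $d'' - c'' = d' - c' \in (|I|/2^{400}, |I|/2^{399})$ is preserved; and the input constraint $d' < 2^{399}|I|$ together with SSST validity forces $d'' < |y - t| \le |I| \ll 2^{399}|I|$. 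Hence the image $(c'',d'')$-domain sits inside the integration domain of $FB_{|I|/2^{400}, 2^{400}|I|, |I|/2^{399}}(t)$, and, since the integrand is nonnegative, the inner integral is dominated by that quantity. The analogous check for the remaining three terms is parallel (different affine maps but the same Jacobian and the same preserved slope width), and summing the four bounded contributions yields the claim.

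The main obstacle is the case-by-case verification in step (v): each of the four SSST geometries arising from $D$ (two with the auxiliary SSST to the left of the plateau, two to the right) produces a different affine change of variables, and one must check in each case that it is bijective with Jacobian one and that the transformed integration region lies inside the integration domain of the output $FB$. A secondary technicality is that the $t$-integrals in $D(u,v)$ are actually restricted to the subset of $\mathcal{M}$ where the relevant SSST is valid; this restriction only shrinks the domain after the Fubini swap, so the inequality still goes in the right direction.
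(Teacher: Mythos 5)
Your proposal follows the paper's one-line sketch correctly in its broad strokes: shrink the $c$-parameter with Proposition \ref{doubling}, insert the pointwise bound of Proposition \ref{long}, use Jensen/H\"older to push the $q$-th power inside the $t$-integral, swap orders of integration, and recognize each inner $(c',d')$-integral as a piece of $FB$ at $t$ via an affine change of variables. The reduction to $c = |I|/2^{399}$, the observation $(d',c') \in \mathcal{P}_{|I|}$ with $d'-c' < |I|/10$ so that Proposition \ref{long} applies, the Jensen step $\bigl(\tfrac{1}{|I|}\int_{\mathcal{M}_i} f\bigr)^q \le \tfrac{1}{|I|}\int_{\mathcal{M}_i} f^q$ using $|\mathcal{M}_i|\le|I|$, and the Jacobian and slope-width-preservation checks for term 1 are all sound.

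There is, however, a genuine gap in the domain verification in step (v). You verify $d'' = y-t-c' < y-t \le |I| \ll 2^{399}|I|$ and then assert "the analogous check for the remaining three terms is parallel." It is not. Term 1 has its auxiliary trapezoid \emph{inward} of $y$, so $t < y-d'$ and $d'' < y-t \le |I|$. But for the \emph{outward} terms, e.g.\ the SSST $\eta(y-d', y-c', a, b)$ centred at $t$, the half-widths are $c'' = t-y+c'$, $d'' = t-y+d'$. Since $d'$ ranges up to $2^{399}|I|$ in the source domain and $t-y$ can be as large as $|I|$, one obtains $d''$ up to $(2^{399}+1)|I|$, which violates the constraint $2d'' < 2^{400}|I|$ (i.e.\ $d'' < 2^{399}|I|$) of the target $FB_{|I|/2^{400},\,2^{400}|I|,\,|I|/2^{399}}(t)$. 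Thus the image of the affine map for terms 2–4 does not sit inside the stated integration domain, and the domination by the target $FB$ fails as written. This is remedied by enlarging the $b$-parameter of the right-hand $FB$ to, say, $2^{401}|I|$ (harmless in the downstream Lemma, which ultimately compares against $FB_{|I|/2^{600},\,2^{600}|I|,\,2^{600}|I|}$); but you should recognize that the four cases genuinely differ and cannot all be dispatched with the bound $d'' < |y-t| \le |I|$.
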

			
			\begin{lemma}
				If the segment $I$ is not dense, then $\mathcal{SSI}(I) \preceq \mathcal{SAI}(I)$.
				\begin{proof}
					Consider the set $\mathcal{M}$ of points $p$ such that $p \in I, |A(p)| < \frac{99}{100}$. $\frac{|I|}{100} \ge A_0(I) \ge \frac{99}{100}(1 - |\mathcal{M}|)$, hence $|\mathcal{M}| \ge \frac{98|I|}{99}$.
					
					$\mathcal{SSI}(I) = \int_I FB_{\frac{|I|}{2^{400}}, 2^{400}|I|, 2^{400}|I|}(z) \preceq \int_I FB_{\frac{|I|}{2^{400}}, 2^{400}|I|, 2^{399}|I|}(z) \preceq \ldots \preceq \int_I FB_{\frac{|I|}{2^{400}}, 2^{400}|I|, \frac{|I|}{2^{39}}}(z)$ by Proposition \ref{doubling}. But by Corollary \ref{measu}
					$$
					FB_{\frac{|I|}{2^{400}}, 2^{400}|I|, \frac{|I|}{2^{399}}}(z) \preceq \frac{1}{|I|}\int_{\mathcal{M}} |FB_{\frac{|I|}{2^{400}}, 2^{400}|I|, \frac{|I|}{2^{399}}}(z)| , dz,
					$$
					hence
					$$
					\mathcal{SSI}(I) \preceq \frac{1}{|I|} \int_{\mathcal{M}} |FB_{\frac{|I|}{2^{400}}, 2^{400}|I|, \frac{|I|}{2^{399}}}(z)| , dz \preceq \mathcal{SAI}(I),
					$$
					which was to be shown.
				\end{proof}
			\end{lemma}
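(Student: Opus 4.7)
The plan is to exploit the non-density hypothesis to produce a large subset $\mathcal{M} \subset I$ where $|1-A|$ is uniformly bounded below, then use Corollary \ref{measu} to reroute the integration defining $\mathcal{SSI}(I)$ onto $\mathcal{M}$, at which point the inserted factor $|1-A|^q \succeq 1$ converts the estimate into $\mathcal{SAI}(I)$.

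First I would set $\mathcal{M} := \{z \in I : |A(z)| < 99/100\}$. Since $A = \Re \widetilde A$ is real-valued, $z \in \mathcal{M}$ gives $1 - A(z) > 1/100$, hence $|1 - A(z)|^q \succeq 1$ uniformly on $\mathcal{M}$. For the size of $\mathcal{M}$: part (3) of Proposition \ref{A0_prop} combined with non-density gives $\int_I \min(|A|, 1) \le A_0(I) < |I|/100$, while on $I \setminus \mathcal{M}$ the integrand is at least $99/100$; hence $|I \setminus \mathcal{M}| < |I|/99$ and so $|\mathcal{M}| \ge 98|I|/99$, which is exactly the hypothesis of Corollary \ref{measu}.

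The rest is a short chain of inequalities. Applying Corollary \ref{measu} pointwise inside $\int_I dy$ and exchanging the order of integration (the prefactor $1/|I|$ is absorbed by the integration of $y$ over $I$), I get $\mathcal{SSI}(I) \preceq \int_{\mathcal{M}} FB_{|I|/2^{400},\, 2^{400}|I|,\, |I|/2^{399}}(z)\, dz$. Inserting the factor $|1-A(z)|^q \succeq 1$, enlarging the domain of integration from $\mathcal{M}$ to $I$, and then replacing the $FB$ parameters by those appearing in $\mathcal{SAI}$ — namely $(|I|/2^{600}, 2^{600}|I|, 2^{600}|I|)$ — yields $\mathcal{SSI}(I) \preceq \mathcal{SAI}(I)$. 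The last replacement is legitimate since the integration region $\{a < d'-c' < c,\ 0 < 2d' < b\}$ defining $FB_{a,b,c}(z)$ is monotone decreasing in $a$ and monotone increasing in $b$ and $c$, and each parameter moves in the favourable direction. The only place that requires attention is this last bookkeeping step; I do not anticipate needing Proposition \ref{doubling} because Corollary \ref{measu} already applies with the $\mathcal{SSI}$ parameters $(|I|/2^{400}, 2^{400}|I|, 2^{400}|I|)$ directly.
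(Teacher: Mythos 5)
Your proof is correct and follows the same route as the paper's: build the large set $\mathcal{M}=\{z\in I:|A(z)|<99/100\}$ from non-density and Proposition~\ref{A0_prop}(3), apply Corollary~\ref{measu} (averaged over $y\in I$) to push the integral onto $\mathcal{M}$, insert the lower bound $|1-A|^q\succeq 1$ there, and finally enlarge the $FB$ parameters to those of $\mathcal{SAI}(I)$. You are also right that Proposition~\ref{doubling} is not needed: Corollary~\ref{measu} already carries the reduction of the third parameter from $2^{400}|I|$ to $|I|/2^{399}$, so the preliminary halving chain in the paper's version is redundant (and the paper's quoted statement of Corollary~\ref{measu} in that step has a typo in the left-hand side's third parameter). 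One small precision worth adding to your final parameter-comparison: besides the monotonicity of the region $\{a<d'-c'<c,\ 0<2d'<b\}$, the prefactor $1/a^2$ in the definition of $FB_{a,b,c}$ also increases when $a$ decreases, so both effects point in the favourable direction and the comparison $FB_{|I|/2^{400},\,2^{400}|I|,\,|I|/2^{399}}\le FB_{|I|/2^{600},\,2^{600}|I|,\,2^{600}|I|}$ holds pointwise.
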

			
			We call a \bfseries signature \mdseries a quadruple $(len, w, l, r)$.
			
			\begin{lemma}
				Fix an arbitrary $\epsilon > 0$. Then for arbitrary real numbers $l, r, L, R \ge 0$ such that $lL = rR$, there exists a finite set of signatures $\mathcal{S}i = (len_i, w_i, ls_i, rs_i)$ such that
				\begin{enumerate}
					\item $0 \le len_i, ls_i, rs_i \le \max(L, R)$
					\item $\sum |w_i| \le l + r$
					\item $\forall t$, $|l\chi{[0,L]} - \sum w_i \chi_{[ls_i, ls_i + len_i]}|(t) < \epsilon$
					\item $\forall t$, $|r\chi_{[0,R]} - \sum w_i \chi_{[rs_i, rs_i + len_i]}|(t) < \epsilon$.
				\end{enumerate}
				\begin{proof}
					Consider the set $\mathcal{M}$ of all possible quadruples $T = (l, r, L, R)$ such that $lL = rR$. We call $T \in \mathcal{M}$ good if it satisfies the conditions of the lemma. In particular, all quadruples with $r, l \ge \epsilon$ are good, as well as all quadruples with $l = 0$ or $r = 0$ are good, since for such quadruples we can take the empty set of segments. We also define an operator $\mathcal{H}: \mathcal{M} \to \mathcal{M}$ acting on the quadruple $T = (l, r, L, R)$ as follows:
					\begin{enumerate}
						\item if $L < R$, then $\mathcal{H}(T) = (r, l, R, L)$
						\item if $L \ge R$, then $\mathcal{H}(T) = (l, r - l, L - R, R)$
					\end{enumerate}
					
					\begin{proposition}
						For any quadruple $T$, there exists a natural number $n$ such that $\mathcal{H}^{(n)}(T)$ is good.
						\begin{proof}
							Denote $\mathcal{H}^{(k)}(T) = (r_k, l_k, R_k, L_k)$. Notice that for some $n_0$, $\min(l_{n_0}, r_{n_0}) < \epsilon$, since otherwise $r_{k+2} + l_{k+2} \le r_k + l_k - \epsilon$, and thus for some $m$, $r_m + l_m < 0$, which is impossible. If for a given $n_0$, $l_{n_0} = 0$ or $r_{n_0} = 0$, then $\mathcal{H}^{(n)}(T)$ is good. If this is not the case, then $r_{n_0+1} < \epsilon$. After that, applying $\mathcal{H}$ will subtract $r_{n_0+1}$ from $l$ until $l > r_{n_0+1}$. There can be only a finite number of such subtractions, so for some $m$, $l_m, r_m < \epsilon$, i.e., $\mathcal{H}^{(m)}(T)$ is good.
						\end{proof}
					\end{proposition}
					
					\begin{proposition}\label{HTT}
						If $\mathcal{H}(T)$ is good, then so is $T$.
						\begin{proof}
							\begin{enumerate}
								\item if $L < R$, then $\mathcal{H}(T) = (r, l, R, L)$. Take a set of signatures $\mathcal{S}_i = (len_i, w_i, r_i, l_i)$ for the quadruple $(r, l, R, L)$ from the lemma's condition and swap $rs_i$ and $ls_i$ in each signature for each $i$. The resulting set of signatures will be suitable for $T$, so $T$ is good.
								\item if $L \ge R$, then $\mathcal{H}(T) = (l, r - l, L - R, R)$. Take the signature $\mathcal{S} = (R, l, L - R, 0)$. Notice that $l\chi{[0,L]} - l\chi_{[L - R, L]} = l\chi_{[0, L - R]}$ and $r\chi_{[0, R]} - l\chi_{[0, R]} = (r - l)\chi_{[0, R]}$, and also the sum $r + l$ in $\mathcal{H}(T)$ is less than the corresponding sum in $T$ by exactly $l$. Thus, if we take a set of signatures for $\mathcal{H}(T) = (l, r - l, L - R, R)$ and add $\mathcal{S}$ to it, the resulting set will be suitable for $T$, so $T$ is good.
							\end{enumerate}
						\end{proof}
					\end{proposition}
					
					From the two propositions above, it follows that all quadruples are good, which completes the proof of Lemma \ref{HTT}.
				\end{proof}

			\begin{corollary}\label{SigSet}
				
				Fix an arbitrary $\epsilon > 0$. Then for arbitrary real $l, r, L, R \ge 0$ such that $lL = rR$, there exists a finite set of signatures $\mathcal{S}_i = (len_i, w_i, ls_i, rs_i)$ such that
				\begin{enumerate}
					\item $0 \le len_i, ls_i, rs_i \le \max(L, R), len_i \ge \frac{\min(L, R)}{3}$
					
					\item $\sum |w_i| \le 2(l + r)$,
					
					\item $\forall t$ $|l\chi{[0, L]} - \sum w_i \chi_{[ls_i, ls_i + len_i]}|(t) < \epsilon$
					
					\item $\forall t$ $|r\chi_{[0, R]} - \sum w_i \chi_{[rs_i, rs_i + len_i]}|(t) < \epsilon$.
				\end{enumerate}
				
				\begin{proof}
					We use Lemma \ref{HTT} and consider the set of signatures $\mathcal{S}'_i$ that it provides. If for some signature in it $len_i < \frac{\min(L, R)}{3}$, we replace it with 2 signatures $\left(\frac{1}{3} + len_i, w_i, l{i,1}, r_{i,1}\right)$ and $\left(\frac{1}{3}, -w_i, l_{i,2}, r_{i,2}\right)$, where
					\begin{enumerate}
						\item If $l_i \ge \frac{1}{3}$, then $l_{i,1} = l_i - \frac{1}{3}$, $l_{i,2} = l_i - \frac{1}{3}$.
						
						\item If $l_i < \frac{1}{3}$, then $l_{i,1} = l_i$, $l_{i,2} = l_i + len_i$.
						
						\item Similarly (replacing $l_i, L$ with $r_i, R$) we define $r_{i,1}, r_{i,2}$.
					\end{enumerate}
					By doing this for all $i$, we obtain a set of signatures satisfying the condition of the corollary.
				\end{proof}
			\end{corollary}
			
			\begin{proposition}\label{sigfinal}
				
				Fix an interval $I$, a real $\beta$, $\frac{1}{2^{100}} \le \beta \le 2^{100}$, and $\epsilon > 0$. Then there exists a finite set of five-number sets ${\alpha_i, ac_i, bc_i, cc_i, dc_i}$ such that:
				\begin{enumerate}
					
					\item $0 \le ac_i, bc_i, cc_i, dc_i \le 2^{105}$
					
					\item $\sum_i |\alpha_i| \preceq |I|$
					
					\item $dc_i - cc_i = bc_i - ac_i \ge \frac{1}{2^{105}}$
					
					\item for any SO $\eta(a, b, c, d) \in OU(I) \cap \mathcal{SR}_\beta$, the estimate holds
					$$
					|\eta(a, b, c, d) - \sum_i \frac{\alpha_i}{b - a} \eta\left(a + (b - a)ac_i, a + (b - a)bc_i, c - (b - a)cc_i, c - (b - a)dc_i\right)| \succeq \epsilon
					$$
				\end{enumerate}
				Recall that
				$$
				OU(I) = {\eta(a, b, c, d): \eta(a, b, c, d) \in \mathcal{PRS}{\frac{|I|}{2^{300}}, \frac{(2^{200} + 1)|I|}{2^{200}}, \frac{|I|}{2^{200}}}, I \subset [b, c]},
				$$
				and $\mathcal{SR}_\beta$ means the set of such SST $\eta(a, b, c, d)$ that $\frac{b - a}{d - c} = \beta$.
				
				\begin{proof}
					Let $l = L = 1, r = \beta, R = \frac{1}{\beta}$ and choose a set of signatures $\mathcal{S}_i = (len_i, w_i, l_i, r_i)$ for $(l, r, L, R)$ by Corollary \ref{SigSet}. Take $\alpha_i = \frac{w_i}{len_i}, ac_i = ls_i, bc_i = ls_i + len_i, cc_i = rs_i, dc_i = rs_i + len_i$ and verify that such $\alpha, ac_i, bc_i, cc_i, dc_i$ are suitable.
					
					Properties 1-3 are checked by straightforward substitution of properties from Corollary \ref{SigSet}. Note that from these properties it also follows that $c - (b - a)cc_i > a + (b - a)bc_i$, i.e., the order in the parameters of the SO in point 4 is correct.
					
					To prove property 4, consider an arbitrary SO $\eta(a, b, c, d) \in OU(I) \cup \mathcal{SR}\beta$. Then
					$$
					\mathcal{D}(a, b, c, d) = \frac{1}{b - a}\chi_{[a, b]} - \frac{1}{d - c}\chi_{[c, d]} =
					$$
					$$
					\frac{1}{b - a}\left(\sum w_i \chi_{[a + (b - a)ls_i, a + (b - a)ls_i + len_i(b - a)]} - \sum w_i \chi_{[c + (a - b)rs_i, c + (a - b)rs_i + (a - b)len_i(d - c)]} + O\left(\frac{\epsilon}{|I|}\right)\right) =
					$$
					$$
					\frac{1}{b - a} \sum_i \alpha_i \mathcal{D}\left(a + (b - a)ac_i, a + (b - a)bc_i, c - (b - a)cc_i, c - (b - a)dc_i\right) + O\left(\frac{\epsilon}{|I|}\right).
					$$
					Hence
					$$
					\eta(a, b, c, d)(t) = \int_a^t \mathcal{D}(a, b, c, d) =
					$$
					$$
					\int_a^t \left(\frac{1}{b - a} \sum_i \alpha_i \mathcal{D}\left(a + (b - a)ac_i, a + (b - a)bc_i, c - (b - a)cc_i, c - (b - a)dc_i\right) + O\left(\frac{\epsilon}{|I|}\right)\right) =
					$$
					$$
					\sum_i \frac{\alpha_i}{b - a} \eta\left(a + (b - a)ac_i, a + (b - a)bc_i, c - (b - a)cc_i, c - (b - a)dc_i\right)(t) + O(\epsilon),
					$$
					which completes the proof.
				\end{proof}
			\end{proposition}
			
		\end{lemma}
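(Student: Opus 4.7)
\medskip

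\textit{Proof proposal.} The plan is to apply Lemma \ref{HTT} to obtain a family of signatures meeting its four conclusions (possibly with very short $len_i$) and then to replace each too-short signature by a pair of signatures of length at least $\min(L,R)/3$ via a simple telescoping identity. Since each replacement at most doubles the contribution to $\sum|w_i|$, the weight bound degrades from $l+r$ to $2(l+r)$, exactly matching the corollary's condition (2).

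Write $L' = \min(L,R)$. Invoking Lemma \ref{HTT} with the given $\epsilon$ yields signatures $\mathcal{S}'_i = (len_i, w_i, ls_i, rs_i)$. I keep every $\mathcal{S}'_i$ with $len_i \ge L'/3$. For each $i$ with $len_i < L'/3$ I replace $\mathcal{S}'_i$ by the pair
\[ (len_i + L'/3,\ w_i,\ l_{i,1},\ r_{i,1}) \quad \text{and} \quad (L'/3,\ -w_i,\ l_{i,2},\ r_{i,2}), \]
whose coordinates realize the telescoping identity
\[ \chi_{[l_{i,1},\,l_{i,1}+len_i+L'/3]} - \chi_{[l_{i,2},\,l_{i,2}+L'/3]} = \chi_{[ls_i,\,ls_i+len_i]}, \]
together with the analogous identity in the $r$-coordinate. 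Concretely, if $ls_i \ge L'/3$ I left-extend, setting $l_{i,1} = l_{i,2} = ls_i - L'/3$ (so the shorter ``hole'' signature cancels the left portion of the enlarged interval); if $ls_i < L'/3$ I right-extend, setting $l_{i,1} = ls_i$ and $l_{i,2} = ls_i + len_i$ (so the hole cancels the right portion). The coordinates $r_{i,1}, r_{i,2}$ are chosen independently from $rs_i$ by the analogous rule.

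Verification is then essentially mechanical. By construction all new lengths are either $len_i + L'/3$ or $L'/3$, both in $[L'/3,\,\max(L,R)]$. The containment of starting positions in $[0,\max(L,R)]$ follows, in the left-extension case, from $ls_i - L'/3 \in [0,\max(L,R)]$, and in the right-extension case from the estimate $ls_i + len_i < 2L'/3 \le \max(L,R)$, which uses $ls_i,\,len_i < L'/3$. The telescoping identities guarantee that $\sum w_i\chi_{[ls_i,ls_i+len_i]}$ and $\sum w_i\chi_{[rs_i,rs_i+len_i]}$ are unchanged by the replacement, so conditions (3) and (4) survive with the same tolerance $\epsilon$. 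Condition (2) follows because each replaced signature of weight $w_i$ becomes two signatures of weight magnitudes $|w_i|$.

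The only point requiring genuine care is the case split between left- and right-extension and the verification that at least one of the two directions is always available within $[0,\max(L,R)]$: this reduces to the observation that if $ls_i < L'/3$ then automatically $len_i < L'/3$, and the two together push the right endpoint of the enlarged interval strictly below $L'$. The independence of the $ls$- and $rs$-choices, which may therefore go in opposite directions for a single original signature, causes no difficulty because the two telescoping identities are set up separately.
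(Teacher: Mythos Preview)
Your proposal is correct and follows essentially the same approach as the paper: apply the preceding lemma, then replace each short signature by a telescoping pair of length $\ge \min(L,R)/3$, with the same left-extend/right-extend case split on the starting position. Your write-up is in fact cleaner than the paper's, which writes the extension length as ``$\tfrac13$'' rather than $\min(L,R)/3$ and omits the verification that the new starting positions and lengths stay in $[0,\max(L,R)]$.
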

	
\begin{lemma}

$|I|FI(I) \preceq \mathcal{SSI}(I)$ for any interval $I$.

Recall that $FI(I) = \frac{1}{|I|^4} \int_{\eta(a, b, c, d) \in OI(I)} |\mathbb{E}(a, b, c, d)B|^q$.
\end{lemma}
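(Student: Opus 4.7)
The plan is to decompose every $\eta(a,b,c,d) \in OU(I)$ into symmetric pieces using Proposition \ref{sigfinal}, pass to the $q$-th power via the power-mean inequality, and then convert the four-dimensional integral over $OU(I)$ into the three-dimensional integral defining $\mathcal{SSI}(I)$ through a change of variables, with the remaining parameter contributing a factor of $O(|I|)$.

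Fix $\epsilon>0$ to be sent to $0$. For every $\eta = \eta(a,b,c,d) \in OU(I)$ the ratio $\beta := (b-a)/(d-c)$ lies in $[2^{-100},2^{100}]$. After subdividing this range into finitely many parts and applying Proposition \ref{sigfinal} on each, I would obtain symmetric skewed pieces $\tilde\eta_i$ with ramps of length $(b-a)\,len_i$ and total width $\asymp |I|$, such that
$$\eta = \sum_i \frac{\alpha_i}{b-a}\,\tilde\eta_i + O(\epsilon),\qquad \sum_i |\alpha_i|\preceq |I|.$$
Since $\int\tilde\eta_i \asymp |I| \asymp \int\eta$ and $\sum_i \frac{|\alpha_i|}{b-a} \preceq |I|/(b-a) = O(1)$ because $b-a\geq |I|/2^{300}$, integrating against $B$ and applying the power-mean inequality gives the pointwise bound
$$|\mathbb{E}_\eta B|^q \preceq \frac{1}{|I|}\sum_i |\alpha_i|\,|\mathbb{E}_{\tilde\eta_i} B|^q + O(\epsilon^q).$$

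For each fixed signature, I would then change variables in $\int_{OU(I)}|\mathbb{E}_{\tilde\eta_i}B|^q\,d\eta$. Parameterize $OU(I)$ by $(a,u,w,v) = (a,b-a,c-b,d-c)$, and substitute $(a,u,w)$ for the centre $t$, half-width $d'$, and inner half-width $c'$ of $\tilde\eta_i$, keeping $v$ as the extra coordinate. A direct computation shows the Jacobian is a nonzero constant depending only on the signature, the integrand is independent of $v$ (which ranges over an interval of length $O(|I|)$), and the image of $(t,d',c')$ lies in the integration region of $FB_{|I|/2^{400},\,2^{400}|I|,\,2^{400}|I|}(z)$ for $z\in I$. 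This yields $\int_{OU(I)}|\mathbb{E}_{\tilde\eta_i}B|^q\,d\eta \preceq |I|\cdot |I|^2\mathcal{SSI}(I)$. Combining with the pointwise bound, using $\sum_i|\alpha_i|\preceq |I|$, and letting $\epsilon\to 0$, I arrive at $\int_{OU(I)}|\mathbb{E}_\eta B|^q\,d\eta \preceq |I|^3\,\mathcal{SSI}(I)$, i.e.\ $|I|FI(I) \preceq \mathcal{SSI}(I)$.

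The main obstacle will be the change of variables: I need to verify that the Jacobian is bounded below away from zero uniformly across signatures and over the finitely many subranges of $\beta$, and that the image $(t,d',c')$ really does fit inside the parameter region for $\mathcal{SSI}(I)$ --- this should follow from $I\subset[b,c]$ and the estimates $(b-a)len_i \in [|I|/2^{400},|I|/2^{100}]$, but writing it out carefully is delicate. A secondary concern is that the number of signatures grows as $\epsilon\to 0$, but since the $\epsilon$-error is additive and the total measure of $OU(I)$ is finite, this is harmless: one fixes the signatures first, absorbs the finite constants into $\preceq$, and then passes to the limit.
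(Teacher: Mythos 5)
Your proposal follows essentially the same route as the paper's proof: both apply Proposition \ref{sigfinal} to decompose each skewed segment in $OU(I)$ into symmetric pieces with a fixed signature (after slicing by $\beta$), pass to $q$-th powers, and change variables to convert the $OU(I)$-integral into the parameter region of $\mathcal{SSI}(I)$, with the remaining coordinate contributing the factor $O(|I|)$ and the Jacobian bounded away from zero via $len_i \succeq 1$. The paper leaves the Jacobian computation as an ``exercise for the reader''; your parameterization via centre and half-widths gives Jacobian $\asymp len_i$ and is correct, so the two proofs are interchangeable.
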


\begin{proof}

Fix a small $\epsilon > 0$.

$$
FI(I) = \frac{1}{|I|^4} \int_{\eta(a, b, c, d) \in OI(I)} |\mathbb{E}(a, b, c, d)B|^q \preceq \frac{1}{|I|^4} \int_{\frac{1}{10} \le \beta \le 10} \int_{\eta(a, b, c, d) \in OI(I) \cup \mathcal{SR}\beta} |\mathbb{E}(a, b, c, d)B|^q.
$$

Fix $\frac{1}{2^{100}} \le \beta \le 2^{100}$ and choose a set of sets ${\alpha_i, ac_i, bc_i, cc_i, dc_i}$ by Corollary \ref{sigfinal} for given $\beta$ and $\epsilon$.

Note that
$$
\int_{\eta(a, b, c, d) \in OI(I) \cup \mathcal{SR}_\beta} |\sum_i \frac{w_i}{b - a}\mathbb{E}(a + (b - a)ac_i, a + (b - a)bc_i, c - (b - a)cc_i, c - (b - a)dc_i)|^q \preceq \mathcal{SSI}(I),
$$
since $\eta(a + (b - a)ac_i, a + (b - a)bc_i, c - (b - a)cc_i, c - (b - a)dc_i) \in \mathcal{PRS}_{\frac{|I|}{2^{400}}, |I|2^{400}, |I|2^{400}}$ by the previous proposition, and the Jacobian of the change of variables
$$
(a, b, c) \to (a + (b - a)ac_i, a + (b - a)bc_i, c - (b - a)cc_i)
$$
is equal to $\theta(1)$ (exercise for the reader).

Now, to complete the proof of the lemma, it is sufficient to sum over $i$ and integrate over $\beta$, and then pass to the limit as $\epsilon \to 0$.
\end{proof}
		
	 \section{Completion of the proof of the theorem}
	
	Now we can formulate an analogue of Lemma \ref{Feuer}, necessary to complete the proof. Namely:
	
	\begin{lemma}\label{BSai}
		Let $I = [a, b]$ be an interval that fell into the dyadic decomposition tree at one of the phases, $I \in \mathcal{NL}$. Then $|I| |E_I B|^q \preceq_K \mathcal{SAI}(I)$.
	\end{lemma}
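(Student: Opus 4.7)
The plan is to prove the inequality via the chain
\[
|I|\,|\mathbb{E}_I B|^q \;\preceq_K\; |I|\,FI(I) \;\preceq\; \mathcal{SSI}(I) \;\preceq_K\; \mathcal{SAI}(I),
\]
where the middle inequality is the immediately preceding lemma. The first link is a Jensen-type estimate, and the last link extends the non-density lemma to the dense case. The Jensen step is the technical heart, and the dense case will be the main obstacle.

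For the Jensen step, I would first observe that every $\eta(a,b,c,d) \in OU(I)$ satisfies $\int \eta = (c+d-a-b)/2 \in [|I|,(1+2^{-199})|I|]$, that $[b,c]\setminus I$ has measure $\le |I|/2^{199}$, and that each triangular side of $\eta$ is of measure $\le |I|/2^{200}$. Jensen's inequality applied to the definition of $FI(I)$ yields
\[
FI(I) \;\ge\; \frac{\mathrm{vol}(OI(I))}{|I|^4}\,\Bigl|\,\mathrm{vol}(OI(I))^{-1}\!\!\int_{OI(I)} \mathbb{E}(a,b,c,d) B\, d(a,b,c,d)\,\Bigr|^q,
\]
and a Fubini computation rewrites the inner average as $|I|^{-1}(\int_I B + R)$, with $R$ a weighted integral of $B$ over a transition zone $\mathcal{Z}$ near $\partial I$ of total measure $\preceq |I|/2^{200}$. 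Since $\mathrm{vol}(OI(I))/|I|^4$ is a fixed absolute constant of order $2^{-800}$, closing the Jensen step reduces to showing $|R| \preceq_K |\int_I B|$ up to $K$-powers. I would invoke the proposition stating that both endpoints of $I$ are $|I|/10$-balanced: the balanced structure provides, at every scale $x \in (0,|I|/10]$, intervals $I(x)$ containing the endpoint on which the endpoint value of $B$ is a $q$-almost midpoint for averages $\mathbb{E}_{[s,t]}B$ over $(s,t) \in \mathcal{U}(I(x))$. Specializing to scales of order $|I|/2^{200}$ and integrating over $\mathcal{Z}$ bounds $|R|^q$ by an $FB$-type integral already absorbed into $\mathcal{SSI}(I)$ up to constants depending on $K$.

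Combining with the preceding lemma then yields $|I||\mathbb{E}_I B|^q \preceq_K \mathcal{SSI}(I)$. When $I$ is not dense, the non-density lemma gives $\mathcal{SSI}(I) \preceq \mathcal{SAI}(I)$ and we are done. When $I$ is dense and $I \in \mathcal{NL}$, step (5) of Algorithm~2 must have failed despite density, so no subsegment of length $|I|/K^2$ with $|I|/K^2$-balanced endpoints can carry $\int_{I'} B \ge (1 - 3/K)\int_I B$. Combining this nonconcentration with step (4) of the algorithm ($|\mathbb{E}_I A| < r+\Delta/2 < 1/5$) and step (3) ($DA(I,1/(10000K^{10})) < 1/(10000K^5)$), I would extract a subset $\mathcal{M}^\ast \subset I$ with $|\mathcal{M}^\ast| \succeq_K |I|$ on which $|1-A| \succeq 1$, and then repeat the proof of the non-density lemma verbatim with $\mathcal{M}^\ast$ in place of the set $\mathcal{M}$ used there, to obtain $\mathcal{SSI}(I) \preceq_K \mathcal{SAI}(I)$ in the dense case as well.

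The main obstacle will be the dense case. The non-density lemma used the inequality $A_0(I) < |I|/100$ directly to produce its $\mathcal{M}$, whereas the nonconcentration that comes from step (5) failing is only qualitative and must be reconciled with the flatness estimates on $A$ from steps (3) and (4) to extract a subset of positive density on which $|A|$ is bounded away from $1$. A secondary technical point is the bookkeeping of the Jensen remainder $R$: trading the boundary integrals of $B$ inside $R$ against the $FB$-averages appearing in $\mathcal{SAI}(I)$ requires scale-by-scale use of the $|I|/10$-balanced structure of the endpoints, with care taken that the $K$-power losses remain polynomial so as to be absorbed into the $\preceq_K$ notation.
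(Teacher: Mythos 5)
Your proposed chain $|I|\,|\mathbb{E}_I B|^q \preceq_K |I|\,FI(I) \preceq \mathcal{SSI}(I) \preceq_K \mathcal{SAI}(I)$ matches the paper's route only in the non-dense case, and even there the first link is handled differently. The paper never applies Jensen to $FI(I)$: instead it introduces $\mathcal{TI}(I) = \int_U |(T(t)-T(z))-(T(y)-T(x))|^q$ over pairs taken from small scale-$|I|/2^{200}$ intervals $I_a, I_b$ at the two endpoints, bounds it above by (a normalization of) $FI(I)$ because $U \subset OU(I)$, and bounds it below by $|I|^q|\mathbb{E}_IB|^q$ \emph{using the almost-midpoint property at both endpoints}. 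This matters: Jensen gives $FI(I) \succeq |\mathbb{E}_{\eta} \mathbb{E}(\eta)B|^q$, and that inner average can cancel against your boundary remainder $R$, so you are forced to prove $|R| \ll |I||\mathbb{E}_IB|$ --- but $R$ is a weighted integral of $B$ over thin transition zones, and the balanced structure only controls weighted $L^q$-averages of $S=\int B$ (via $T'$), not pointwise or $L^1$ sizes of $B$ itself. The almost-midpoint definition $\int|G-t|^{q}d\mu \succeq |y-t|^q$ is precisely an anti-cancellation device that replaces your Jensen step and makes the remainder estimate unnecessary; by routing the balanced structure into the remainder instead of the main term, your version reintroduces exactly the problem the definition was built to eliminate.

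The genuine gap is the dense case. You want $\mathcal{SSI}(I)\preceq_K\mathcal{SAI}(I)$ there by extracting $\mathcal{M}^*\subset I$ with $|\mathcal{M}^*|\succeq_K|I|$ and $|1-A|\succeq 1$ on it, then rerunning the non-density argument. This cannot work: the non-density lemma (via Corollary \ref{measu} and Proposition \ref{long}) requires $|\mathcal{M}|\ge \tfrac{98}{99}|I|$, a near-full-density demand, and the split $\mathcal{M}_\tau$/$\mathcal{M}_\pm$-type comparisons break down if $\mathcal{M}$ only has density $\succeq_K 1$. The paper instead abandons the $\mathcal{SSI}$-route entirely in the dense case and argues: either there exists a point $u$ near the center with small $F_B$, in which case the balanced-point machinery produces a subsegment $[p,b]$ with $\int_{[p,b]}B\ge (1-3/K)\int_IB$ and balanced endpoints, so $I$ would have been declared a leaf at step (5) of Algorithm 2 --- a contradiction with $I\in\mathcal{NL}$; or no such $u$ exists, in which case $F_B$ is large on a whole subinterval $I_1$, and using $\mathbb{E}_{I_1}A<2/3$ (else step (4) triggers) together with Jensen on $|1-A|^q$ gives $\mathcal{SAI}(I)\succeq_K C^q|I|$, while $|\mathbb{E}_IB|\preceq_K C$ by Proposition \ref{BeqA}, closing the bound directly. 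Without this dichotomy, and without using the failure of step (5) as a contradiction rather than as a source of nonconcentration, the dense case does not go through.
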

	
	\begin{proof}
		
		\begin{enumerate}
			
			\item Suppose the interval $I$ is not dense. Then by Proposition 24, its endpoints are balanced. We choose intervals $I_a, I_b$ from the definition of the balanced point for $a, b$ such that $\frac{|I|}{2^{200}} \leq |I_a|, |I_b| \leq \frac{|I|}{2^{210}}$. Consider the set $$U =\{ x < y \in I_a, z < t \in I_b, y - x > \frac{I_a}{100}, t - z > \frac{I_b}{100} \}$$ and consider $\mathcal{TI}(I) = \int_U |(T(t) - T(z)) - (T(y) - T(x))|^q$ (recall that $S = \int B, T = \int S$). Then $\mathcal{TI}(I) \succeq |(S(1) - (T(y) - T(x)))|^q \succeq |S(1) - S(0)|^q = |I|^q |E_I B|^q$ by the definition of a balanced point. On the other hand, $(T(t) - T(z)) - (T(y) - T(x)) = \int_{\eta(x, y, z, t)} B \preceq |I| \mathbb{E}(x, y, z, t) B$ and $U \subset OU(I)$, which means $\mathcal{TI}(I) \preceq |I|^q \int_U |\mathbb{E}(x, y, z, t) B|^q \preceq |I|^q \int_{OU(I)} |\mathbb{E}(x, y, z, t) B|^q = |I|^q F_I(I)$. Now by Lemmas 37 and 31, $|I| F_I(I) \preceq \mathcal{SSI}(I) \prec \mathcal{SAI}(I)$, hence $|I| |E_I B|^q \preceq \mathcal{SAI}(I)$.
			
			\item Now let us assume $I$ is dense. The interval has reached the last step of the algorithm, and in particular, this means that $1000KC \geq \mathbb{E}_I B \geq \frac{C}{1000K}$, since $A_0(I) \geq \frac{1}{1000}$, and it was not declared a leaf in previous steps. Consider the interval $I_1 = \left[ \frac{a + b}{2} - \frac{4}{K^2}, \frac{a + b}{2} - \frac{3}{K^2} \right]$. Suppose there exists a point $u$ in it such that $F_B\left( \frac{|I|}{2^{600}}, 2^{600} |I|, 2^{600} |I| \right) < \frac{C^q}{10^{10q} K^{10q}}$. Choose in this case an interval $I'$ from the definition of a balanced point for point $a$ of length between $\left[ \frac{1}{100K^2}, \frac{1}{K^2} \right]$. Reflect it with respect to point $u$, and obtain the interval $I''$. For a point $t$ in the interval $I'$, we will denote the reflected point as $t'$. By Lemma \ref{midpoint}, there exists a function $F$ such that in any interval $I_2$ of length $|I'|$, there exists a balanced point $p$ with $\mathcal{IF}(I_2) = S(p) = \int_U F(a, b) \frac{T(b) - T(a)}{b - a}$, and since $I'$ is chosen as an interval from the definition of a balanced point for $a$, then $\mathcal{IF}(I') = S(0)$. On the other hand,
			$$
			|\mathcal{IF}(I'') - \mathcal{IF}(I')| \preceq \int_U \left| \frac{T(a') - T(b') - (T(b) - T(a))}{b - a} \right| \preceq
			$$
			$$
			\int_U |E(a, b, b', a')| \preceq \left( F_B\left( \frac{|I|}{2^{600}}, 2^{600} |I|, 2^{600} |I| \right) \right)^{\frac{1}{q}} \leq \frac{C}{10^{10} K^{10}},
			$$
			where the penultimate inequality holds by the power means inequality.
			
			Choose now a balanced point $p \in I'', S(p) = \mathcal{IF}(I'')$. Then at this point, $|S(p) - S(a)| \preceq \frac{C}{K^{10}}$, that is, $|E_{[p, b]} B| = |S(b) - S(p)| \geq \left(1 - \frac{3}{K}\right) |S(b) - S(a)| = E_I B$, and on the other hand, $|p - b| \preceq \frac{b - a}{K^2}$. Thus, the interval $[p, b]$ satisfies the conditions for a subinterval described in step 5 of the algorithm, that is, the interval $[p, 1]$ will be placed in $\mathcal{AL}$ - in any case, the interval $I$ does not fall under the conditions of the lemma.
			
			\item If such a point $u$ does not exist, then
			$$
			\mathcal{SAI}(I) = \int_I F_B\left(\frac{|I|}{2^{600}}, 2^{600} |I|, 2^{600} |I|\right)(z) |1 - A(z)|^q , dz \succeq
			$$
			$$
			\int_{I_1} F_B\left(\frac{|I|}{2^{600}}, 2^{600} |I|, 2^{600} |I|\right)(z) |1 - A(z)|^q \succeq_K C^q \int_{I_1} |1 - A(z)|^q.
			$$
			But note that $\mathbb{E}I A < \frac{1}{2}$, and hence $\mathbb{E}{I_1} A < \frac{2}{3}$, otherwise the interval would have been declared a leaf at step 4 of the algorithm. Therefore, by the mean inequality,
			$$
			C^q \int_{I_1} |1 - A(z)|^q \succeq_K C^q |I|,
			$$
			and at the same time, $|\mathbb{E}_I B| \preceq_K 1$, which completes the proof of the lemma.
			
		\end{enumerate}
	\end{proof}
	
	Now we are ready to complete the proof of the theorem in the same way as in the particular case.
	
	Consider a successful phase of the algorithm. Fix some $p_1$, $\frac{1 + \sqrt{5}}{2} < p_1 < p$, $q_1 = \frac{p_1}{p_1 - 1}$, and also $\alpha$ such that $2 - p_1 > \alpha > 0$.
	
	Note that since by Lemma \ref{SumBal}, $\sum_{I \in \mathcal{NL}, A_0(I) \geq \frac{|I|}{10^{1000} K^{1000}}} \mathcal{BAL}(I) \succeq_K 1$, then for some $M_1$, it is true that
	$$
	\sum_{I \in \mathcal{NL}, A_0(I) \geq \frac{|I|}{10^{1000} K^{1000}}, \frac{1}{M_1} \geq DA(I, \frac{1}{10^{100} K^{100}}) \geq \frac{1}{2M_1}} |I| DA(I, \frac{1}{10^{100} K^{100}})^2 \succeq \frac{1}{C M_1^{\alpha}}.
	$$
	Take this $M_1$ and call the intervals from this set interesting. Denote the set of such intervals by $\mathcal{IS}$.
	
	Also note that if any two interesting intervals intersect, they differ in length by at least a factor of $\frac{3}{2}$. Thus, we can break the set of interesting intervals into 2 such that within each, any intersecting intervals differ in length by at least a factor of 2. For one of these 2 subsets of the partition, the same equality holds; we will narrow the set of interesting intervals to this subset. By $\mathcal{IS}_i$ we will denote interesting intervals of length from $\frac{2\pi}{2^i}$ to $\frac{2\pi}{2^{i+1}}$.
		
	 Let's now define the function $F_i$ as follows:
	
	\begin{enumerate}
		
		\item $F_i(x) = 0, x \notin \mathcal{IS}_i$.
		
		\item Define the function on some $I \in \mathcal{IS}_i$. In the definition of $DA(I, \frac{1}{10^{100}K^{100}})$, the maximum over subintervals is involved. Choose the subinterval $I_1$ where the maximum is achieved, and from all such, the shortest one. Then $F_i(x) = 1, x \in I_1, F_i(x) = c, x \in I \setminus I_1$,
		where the constant $c$ is chosen such that $\mathbb{E}_IF_i = 0$.
		
	\end{enumerate}
	
	It is easy to see that $\mathcal{TS}_i = (F_i, \mathcal{IS}_i)$ is a $T$-system. Let's verify that the condition of Lemma 2 is satisfied for sufficiently large $C$ for given numbers $p, r = q_1, \alpha, M = C^{p_1 - p}M_1, C$, as well as the $T$-sequence $\mathcal{P} = \{\mathcal{TS}_0, \mathcal{TS}_1, \mathcal{TS}_2, \ldots\}$ and function $A$.
	
	\begin{enumerate}
		
		\item First condition holds since $\mathbb{E}(F_i A) = \frac{1}{2\pi} \sum_{I \in \mathcal{IS}_i} |I| \mathbb{E}I(F_i A) \succeq_K \sum{I \in \mathcal{IS}_i} |I| \succeq \frac{\mu_i}{M}$.
		
		\item By Lemmas \ref{Disj1} and \ref{BSai} and Proposition \ref{BeqA}, $$1 \succeq_K \sum_{\mathcal{IS}i} \mathcal{SAI}(I) \succeq_K \sum_{\mathcal{IS}i} |I||E_IB|^q \succeq_K \sum_{\mathcal{IS}i} |I|C^q|E_IA|^q \succeq_K \sum_{\mathcal{IS}i} |I|C^q$$. Hence, $\mu_i \preceq \sum_{\mathcal{IS}_i} |I| \preceq_K C^{-q}$, i.e., $\mu_i \preceq C^{-q_1}$ for sufficiently large $C$, and second condition holds.
		
		\item $\sum_{I \in \mathcal{IS}i} |I|DA(I, \frac{1}{10^{100}K^{100}})^2 \succeq_K \frac{1}{CM_1^{\alpha}}$ by the definition of interesting intervals, hence $\sum_i \mu_i = \sum_{I \in \mathcal{IS}} |I| \succeq_K \frac{M_1^{2 - \alpha}}{C}$, i.e., $\sum_i \mu_i \ge \frac{M^{2 - \alpha}}{C}$ for sufficiently large $C$, so the last condition holds aswell.
		
	\end{enumerate}
	
	Applying Lemma \ref{T-sec} now, we obtain that $\left|A\right|_{A_p} \succeq C^{q_1(2 - p) - 1}$ for any $q_1 > q$, i.e., $ \left| A \right|_{A_p} \succeq C^{q(2 - p) - 1 - \epsilon}$ for any $\epsilon > 0$ for sufficiently large $C$, which completes the proof of Theorem \ref{main}.
	
The condition of Theorem \ref{main} can be somewhat weakened. Here is one of the possible variants.
	
	\begin{proposition}
		
		Let $\delta > 0$ be a fixed positive number, and let $F: \mathbb{R} \to \mathbb{R}$ such that there exists an interval $I$, $|I| < 1 - \delta$ such that $|F(x)| \succeq 1, x \notin I$. Let also the complex-valued $G$ such that $|G(z)| \succ F(\Re z)$. Then condition 3 of the theorem can be replaced by
		\begin{itemize}
			\item $\max_m ||G(\widetilde{A})S_m(\widetilde{B})||{A_p} \preceq 1$.
		\end{itemize}
		In particular, for $G = 1 - z$ and $F = 1 - x$, we obtain the original formulation, and for $G = z, F = x$, we get
		\begin{itemize}
			\item $\max_m ||A S_m(\widetilde{B})||{A_p} \preceq 1$.
		\end{itemize}
		
		The proof remains the same.
		
	\end{proposition}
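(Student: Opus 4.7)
The plan is to trace every use of condition 3 in the proof of Theorem \ref{main} and verify that the weakened condition, combined with the hypotheses on $F$ and $G$, suffices at each step. A scan of the argument reveals that condition 3 enters only via the factor $|1-A(z)|^q$ in the definition of $\mathcal{SAI}(I)$ and via the upper bound $\max_n \|(1-A)S_n(B)\|_{L^q}\preceq 1$ inside the proof of Lemma \ref{Disj1}. Accordingly I propose to redefine
$$\mathcal{SAI}(I)\;:=\;\int_I FB_{\frac{|I|}{2^{600}},\,2^{600}|I|,\,2^{600}|I|}(z)\,|G(\widetilde{A}(z))|^q\,dz,$$
keep every other definition intact, and rerun the argument; only the upper bound of Lemma \ref{Disj1} and the lower bound of Lemma \ref{BSai}, case 2, need reverification, since the finiteness-of-phases proposition and all other uses of condition 4 remain untouched. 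Denote the interval produced by the hypothesis by $I_F$.

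The upper bound is straightforward. Since $B=\Re\widetilde{B}$, the pointwise inequality $|S_n(B)|\le|S_n(\widetilde{B})|$ gives
$$\|G(\widetilde{A})\,S_n(B)\|_{L^q}\;\le\;\|G(\widetilde{A})\,S_n(\widetilde{B})\|_{L^q}\;\preceq\;|G(\widetilde{A})\,S_n(\widetilde{B})|_{A_p}\;\preceq\;1$$
by Hausdorff--Young and the new form of condition 3. Substituting this into the chain of estimates in the proof of Lemma \ref{Disj1}, via Corollary \ref{maxS} applied to the finite measure $|G(\widetilde{A})|^q\,dx$, yields $\sum_{I\in\mathcal{M}}\mathcal{SAI}(I)\preceq 1$ with the modified $\mathcal{SAI}$.

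The lower bound is the main obstacle. Using $|G(\widetilde{A}(z))|\succeq F(A(z))$ together with $F(A(z))\succeq 1$ when $A(z)\notin I_F$, the task reduces to showing $|\{z\in I_1:A(z)\notin I_F\}|\succeq|I_1|$ for the interval $I_1$ from case 2 of Lemma \ref{BSai}. The algorithm invariants pin $\mathbb{E}_{I_1}A$ to a small neighborhood of $\mathbb{E}_I A\in(-1/10,1/10)$ through the discrepancy bound of step 3, while the hypothesis $|I_F|<1-\delta$ matches the $\delta$ of condition 4 of the theorem. The required measure-theoretic claim then follows from a short case analysis on whether $\mathbb{E}_{I_1}A\in I_F$ or not: if $\mathbb{E}_{I_1}A$ lies outside $I_F$, a Jensen-type argument applied to the signed function $A-\mathbb{E}_{I_1}A$ combined with the discrepancy bound forces a positive fraction of $I_1$ to stay outside $I_F$; if it lies inside, the freedom to shift $I_1$ within $I$ offered by the balancedness built into step 5 of Algorithm 2 lets us replace $I_1$ by a sub-subinterval whose $A$-mean is pushed to an endpoint of $I_F$ without damaging the other bounds. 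With this in hand, the remainder of the proof of Theorem \ref{main}, culminating in the application of Lemma \ref{T-sec}, goes through verbatim and yields $|\widetilde{A}|_{A_p}\succeq C^{(q(2-p)-1)/p-\epsilon}$.
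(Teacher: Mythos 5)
You are right that the paper gives no actual argument here (it literally says ``The proof remains the same''), so your plan to trace each use of condition 3 through the machinery is the correct instinct. The upper-bound substitution is sound: $|S_n(B)|\le|S_n(\widetilde B)|$, Hausdorff--Young, and the new condition 3 fed into Corollary~\ref{maxS} with the finite measure $|G(\widetilde A)|^q\,dx$ indeed give $\sum_{I\in\mathcal M}\mathcal{SAI}(I)\preceq1$ for the modified $\mathcal{SAI}$.

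The lower-bound side, however, has a genuine gap, and you also overlooked one step that uses the specific form $1-A$. The lemma asserting $\mathcal{SSI}(I)\preceq\mathcal{SAI}(I)$ for non-dense $I$ defines $\mathcal M=\{p\in I:|A(p)|<99/100\}$ and relies on both $|\mathcal M|\ge\frac{98}{99}|I|$ (via $\int_I\min(|A|,1)\le A_0(I)\le|I|/100$) and $|1-A|>1/100$ on $\mathcal M$. With the modified $\mathcal{SAI}$ one would instead need $\{p:A(p)\notin I_F\}$ to fill most of $I$, but non-density only bounds $|\{|A|\ge\varepsilon\}|$; when $0\in I_F$ (which is forced if $F(x)=x$) and $A$ is small on $I$, that set can be negligible and the modified lemma is simply false. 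The same obstruction defeats your central claim for case (3) of Lemma~\ref{BSai}. The original argument is a genuine Jensen inequality applied directly to $1-A$, exploiting convexity of $|\cdot|^q$ and requiring nothing beyond a bound on $\mathbb E_{I_1}A$; your replacement $|\{z\in I_1:A(z)\notin I_F\}|\succeq|I_1|$ is a statement about the fine-scale distribution of $A$, and the available data --- a bound on $\mathbb E_{I_1}A$ and a discrepancy bound on means over subintervals of length at least $|I|/(10^4K^{10})$ --- do not yield it: $A$ can take a fixed value inside $I_F$ on all but a negligible fraction of $I_1$ while satisfying both constraints. The appeal to balancedness to ``push'' $\mathbb E_{I_1}A$ to an endpoint of $I_F$ does not work either, since $I_1$'s position is fixed by the case-(2) construction and the discrepancy bound pins $\mathbb E_{I'}A$ near $\mathbb E_I A$ for every admissible subinterval $I'\subset I$. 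As written the proposal does not close the argument; a correct proof, if one exists, must bring the hypothesis $|I_F|<1-\delta$ and condition 4 into play in a way not attempted here.
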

	
	\section{Acknowledgments}
	The author expresses his enormous gratitude to R. Romanov and A. Tselishchev for fruitful discussions and invaluable assistance in writing the article. The author is the winner of the “Leader” competition conducted by the Foundation for the Advancement of Theoretical Physics and Mathematics “BASIS” and would like to thank its sponsors and jury. 

\end{document}